\theoremstyle{plain}
\newtheorem{theorem}{Theorem}[section]
\newtheorem{cor}[theorem]{Corollary}
\newtheorem{prop}[theorem]{Proposition}
\newtheorem{lemma}[theorem]{Lemma}
\theoremstyle{definition}
\newtheorem{rmk}[theorem]{Remark}
\numberwithin{equation}{section}
\newtheorem*{theoremA*}{Theorem A}
\newtheorem*{theoremB*}{Theorem B}
\newtheorem*{theorem1*}{Theorem A'}
\newtheorem*{theoremC*}{Theorem C}
\newtheorem*{theoremD*}{Theorem D}
\newtheorem*{theoremE*}{Theorem E}
\newtheorem*{theoremF*}{Theorem F}
\newtheorem*{theoremE2*}{Theorem E2}
\newtheorem*{theoremE3*}{Theorem E3}
\newcommand{\bs}{\backslash}
\newcommand{\C}{\mathbb{C}}
\newcommand{\cD}{\mathcal{D}}
\newcommand{\cH}{\mathcal{H}}
\newcommand{\cV}{\mathcal{V}}
\newcommand{\cQ}{\mathcal{Q}}
\newcommand{\cM}{\mathcal{M}}
\newcommand{\R}{\mathbb{R}}
\newcommand{\N}{\mathbb{N}}
\newcommand{\Pb}{\mathbb{P}}
\newcommand{\Gr}{\operatorname{Gr}}
\newcommand{\SO}{\operatorname{SO}}
\newcommand{\Hom}{\operatorname{Hom}}
\newcommand{\End}{\operatorname{End}}
\newcommand{\tr}{\operatorname{tr}}
\newcommand{\Ad}{\operatorname{Ad}}
\newcommand{\ad}{\operatorname{ad}}
\newcommand{\vol}{\operatorname{vol}}
\newcommand{\supp}{\operatorname{supp}}
\newcommand{\PSl}{\operatorname{PSl}}
\newcommand{\acomp}{\mathbf{a}}
\newcommand{\kcomp}{\mathbf{k}}
\newcommand{\oncomp}{\overline{\mathbf{n}}}
\newcommand{\cfunc}{\mathbf{c}}
\renewcommand{\Re}{\operatorname{Re}}
\def\dotvar{\, \cdot\,}
\newcommand{\sslash}{\mathbin{/\mkern-6mu/}}
\def\hat{\widehat}
\def\af{\mathfrak{a}}
\def\e{\epsilon}
\def\gf{\mathfrak{g}}
\def\cf{\mathfrak{c}}
\def\kf{\mathfrak{k}}
\def\mf{\mathfrak{m}}
\def\nf{\mathfrak{n}}
\def\sf{\mathfrak{s}}
\def\fa{\mathfrak{a}}
\def\fg{\mathfrak{g}}
\def\fk{\mathfrak{k}}
\def\fn{\mathfrak{n}}
\def\fs{\mathfrak{s}}
\def\la{\langle}
\def\ra{\rangle}
\def\1{{\bf1}}
\def\U{\mathcal{U}}
\def\cU{\mathcal{U}}
\def\cF{\mathcal{F}}
\def\tilde{\widetilde}
\def\good{\mathrm{good}}
\newcommand{\empt}{\varnothing}
\title[Plancherel Theorem]{On Harish-Chandra's Plancherel Theorem for Riemannian symmetric spaces}
\subjclass[2000]{22E46, 
 43A85, 
 43A90
}
\begin{document}
\date{\today}

\begin{abstract}
In this article we give an overview of the Plancherel theory for Riemannian symmetric spaces $Z=G/K$. In particular we illustrate recently developed methods in Plancherel theory for real spherical spaces by explicating them for Riemannian symmetric spaces, and we explain how Harish-Chandra's Plancherel theorem for $Z$ can be proven from these methods.
\end{abstract}

\author[Kr\"otz]{Bernhard Kr\"{o}tz}
\email{bkroetz@gmx.de}
\address{Universit\"at Paderborn, Institut f\"ur Mathematik\\Warburger Stra\ss e 100,
33098 Paderborn}

\author[Kuit]{Job J. Kuit}
\email{j.j.kuit@gmail.com}
\address{Universit\"at Paderborn, Institut f\"ur Mathematik\\Warburger Stra\ss e 100,
33098 Paderborn}

\author[Schlichtkrull]{Henrik Schlichtkrull}
\email{schlicht@math.ku.dk}
\address{University of Copenhagen, Department of Mathematics\\Universitetsparken 5,
DK-2100 Copenhagen \O}

\maketitle
\tableofcontents

\section{Introduction}
Over the years the monumental work of Harish-Chandra in harmonic analysis on real reductive groups has inspired many generalizations and new developments. One such recent development is Plancherel theory on real spherical spaces. The aim of this article is to illustrate this newly developed theory by explicating it for Riemannian symmetric spaces. The purpose of our treatment of Riemannian symmetric spaces in this article is not to simplify previous proofs, but rather to reveal many of the intricacies of the approach for general real spherical spaces in \cite{DKKS} and \cite{KuitSayag}.

Originally the Plancherel theorem for Riemannian symmetric spaces was established by Harish-Chandra in \cite{HC_sphericalFunctionsII} up to two conjectures. See page 612 at the end of the paper. The first of these conjectures was confirmed by the work of Gindikin and Karpelevich on the Harish-Chandra $c$-function in \cite{GindikinKarpelevich}. The second conjecture was proven by Harish-Chandra in \cite[Lemma 36]{HC_DS2}; see page 48. In \cite{Rosenberg} Rosenberg gave a substantially simplified proof.

The article is organized as follows. We begin by introducing notation and normalizations of measures in Section \ref{Section Notions}. In Section \ref{Section Spherical representation theory} we give an overview of some representation theory for a reductive Lie group $G$ with a particular focus on spherical representations, i.e. the representations that contain a non-zero vector that is fixed by a maximal compact subgroup $K$. We describe the abstract Plancherel decomposition for the Riemannian symmetric space $Z=G/K$ and formulate the necessary conditions for an irreducible unitary representation $\pi$ to occur in the Plancherel decomposition: $\pi$ has to be spherical and tempered. Finally we state a classification of the irreducible tempered spherical representations. The classification is well known in the literature. In Section \ref{Subsection Temperedness revisited} we give a proof of this, which to our best knowledge is new.

In section \ref{Section Intertwiners and Asymptotics} we give an overview of the theory of standard intertwining operators. In Theorem \ref{Thm Principal asymptotics} we relate the standard intertwining operators to the principal asymptotics of certain matrix coefficients. As an application of the theorem we give the proof of the classification of irreducible tempered spherical representations mentioned above.

Attached to the Riemannian symmetric space $Z=G/K$ are certain boundary degenerations. These are homogeneous spaces that occur as normal bundles of $G$-orbits in a suitable compactification of $Z$. For our purposes the most relevant boundary degeneration is the horospherical boundary degeneration $Z_{\empt}$. The name is derived from the fact that the homogeneous space $Z_{\empt}$ can be identified with the set of all horospheres in $Z$. The space $Z_{\empt}$ admits a positive $G$-invariant Radon measure. The Plancherel decomposition of the corresponding space $L^{2}(Z_{\empt})$ of square integrable functions on $Z_{\empt}$ is easy to derive. This is due to a right-action of a non-compact torus on $Z_{\empt}$ commuting with the left action of $G$. This Plancherel decomposition we derive in Section \ref{Section Planch for Z_empt}.

In the final section, section \ref{Section Proof of Plancherel formula} we sketch how Harish-Chandra's Plancherel decomposition for $Z$ can be derived from the Plancherel decomposition for $Z_{\empt}$. The first ingredient is the constant term approximation, which relates matrix coefficients on $Z$ with matrix coefficients on $Z_{\empt}$. With the aid of the principal asymptotics from Theorem \ref{Thm Principal asymptotics} we explicitly determine the constant term approximation for all unitary principal series representations. The Plancherel decomposition for $Z$ is then derived from the constant term approximation by matching functions on $Z$ and $Z_{\empt}$ and an averaging procedure.

{\em Acknowledgement:} This article is based on the lecture notes for a mini-course given by the first named author at the Harish-Chandra centennial workshop at IIT Guwahati in India. We thank the organisers of the conference for their extraordinary hospitality and the referee for a very careful reading of the manuscript.

\section{Notions and Generalities}\label{Section Notions}
Let $G$ be a real reductive group. Attached to $G$ is a Riemannian symmetric space $Z$ of the non-compact type, namely $Z=G/K$  where $K$ is a maximal compact subgroup of $G$. Any other choice of maximal compact subgroup $K'\subset G$ is conjugate to $K$, say $K=xK'x^{-1}$ for some $x\in G$, and the map
$$
Z=G/K\to Z'=G/K',\quad  gK \mapsto gx K'
$$
 is an isometric isomorphism.
Let $\gf={\rm Lie}(G)$ be the Lie algebra of $G$, likewise $\kf={\rm Lie}(K)$ the Lie algebra $K$. In general we adopt to the rule that capital Latin letters $A,B,C,\ldots $ denote  subgroups of $G$ with Lie algebras $\af, {\mathfrak b}, \cf,\ldots$.
\par Let $\theta$ be the Cartan involution attached to $K$, i.e. the unique involutive automorphism of $G$ with $G^\theta = K$. By abuse of notation we use the symbol $\theta$ to denote the derived automorphism $d\theta(\1)$ of $\gf$ as well. Then $\gf= \kf +\sf$ with $\sf$ the $-1$-eigenspace of $\theta$.  Next we fix a maximal abelian subspace $\af\subset \sf$ and exponentiate it to a closed abelian subgroup $A:=\exp(\af)$. If $z_0=K\in Z$ denotes the canonical basepoint, then $A\cdot z_0$ is a totally geodesic submanifold containing $z_0$. Moreover, $A\cdot z_{0}$ is a maximal flat submanifold, i.e. it is maximal with respect to the property that the Riemann curvature tensor vanishes everywhere. We remark that all maximal flat totally geodesic submanifolds through $z_{0}$ are conjugate under $K$, i.e. if $X\subset Z$ is a flat containing $z_{0}$, then there exists a $k\in K$, so that $X=kA\cdot z_{0}$. This fact is equivalent to the fact that all maximal abelian subspaces of $\fs$ are $K$-conjugate to each other. See \cite[Proposition V.6.1]{Hel1}.

We continue with the remaining standard notation. The centralizer of $A$ in $K$ is denoted by $M$, in symbols $M=Z_K(A)$. The set of restricted roots
attached to the pair $(\gf, \af)$ is denoted by $\Sigma\subset \af^*\bs\{0\}$. For each
$\alpha\in \Sigma$ we indicate with $\gf^\alpha$ the associated root space. We fix a positive system $\Sigma^+$ and let $\nf=\bigoplus_{\alpha\in \Sigma^+} \gf^\alpha$.
We define $\rho\in\fa^{*}$ to be the element so that
\begin{equation}\label{eq Def rho}
\det\big(\Ad(a)|_{\fn}\big)=a^{2\rho},
\end{equation}
i.e.,
$$
\rho
=\frac{1}{2}\sum_{\alpha\in\Sigma^{+}}\dim(\fg_{\alpha})\alpha.
$$

With $N=\exp(\nf)\subset G$ we obtain a maximal unipotent subgroup of $G$ and
$P=MAN$ is a minimal parabolic subgroup of $G$. Of further use are the minimal parabolic subgroup $\overline{P}=\theta(P)$ and the unipotent subgroup $\overline{N} = \theta(N)$ opposite to $P$ and $N$. Let
$$\af^{--}=\{ X \in \af \mid \alpha(X)<0, \alpha \in \Sigma^+\}$$
be the negative Weyl chamber and $\af^-$ its closure. We denote by $A^{--}\subset A^- \subset A$ the corresponding images under the exponential map.
The little Weyl-group $W=N_K(\af)/M$ acts on $\af$ and exhibits $\af^{-}$ as fundamental domain.

\subsection{Normalization of measures and integral formulas}
The measures which we will use in the course of this article will be normalized as described in this section.

The measures on the compact groups $K$ and $M$ are the normalized Haar measures, i.e., the bi-invariant measures so that $K$ and $M$ have volume equal to $1$. We equip the quotient $K/M$ with quotient measure, i.e., the Radon measure given by
$$
\int_{K/M}\int_{M}\phi(km)\,dm\,dkM=\int_{K}\phi(k)\,dk\qquad \big(\phi\in C(K)\big).
$$

Let $B$ be a non-degenerate invariant bilinear form on $\fg$ so that $B$ coincides with the Killing form on the maximal semisimple ideal $[\fg,\fg]$ of $\fg$ and $\langle\dotvar,\dotvar\rangle:=-B(\dotvar, \theta\dotvar)$ is an inner product on $\fg$.
We fix a normalization of the Lebesgue measure on $\fa$ by requiring that a unit hypercube in $\fa$ with respect to $\langle\dotvar,\dotvar\rangle$ has volume equal to $1$, and we equip $A$ with the Haar measure obtained from the Lebesgue measure on $\fa$ by pulling back along the exponential map.
Let $\kcomp:G\to K$, $\acomp:G\to A$ and $\oncomp:G\to \overline{N}$ be projections on $K$, $A$ and $\overline{N}$, respectively along the Iwasawa decomposition $G=KA\overline{N}$, i.e., $\kcomp$, $\acomp$ and $\oncomp$ are given by
$$
g= \kcomp(g)\acomp(g)\oncomp(g)\qquad (g\in G).
$$
The exponential map $\exp:\fn\to N$ is a diffeomorphism.
Any Haar measure on $N$ equals up to multiplication by a positive constant the pull-back of the Lebesgue measure on $\fn$. We normalize the Haar measure on $N$ so that
$$
\int_{N}\acomp(n)^{2\rho}\,dn
=1.
$$
If $U=kNk^{-1}$ for some $k\in K$, then we normalize the Haar measure on $U$ by setting
$$
\int_{U}\phi(u)\,du
=\int_{N}\phi(knk^{-1})\,dn\qquad \big(\phi\in C_{c}(U)\big).
$$

We equip $G$ with the Radon measure determined by
\begin{equation}\label{eq Integral formula Iwasawa decomp}
\int_{G}\phi(g)\,dg
=\int_{K}\int_{A}\int_{\overline{N}}\phi(ka\overline{n})a^{-2\rho}\,d\overline{n}\,da\,dk
\qquad\big(\phi\in C_{c}(G)\big).
\end{equation}
This measure on $G$ is left $K$-invariant and right $A\overline{N}$-invariant. Since the product map $K\times A\overline{N}\to G$ is a diffeomorphism, there exists up to multiplication by a positive constant only one such measure on $G$. Because any Haar measure on $G$ is left $K$-invariant and right $A\overline{N}$-invariant, it follows that the above measure on $G$ is a Haar measure.
We equip the quotients $G/K$ and $G/M\overline{N}$ with the invariant measures given by
$$
\int_{G/K}\int_{K}\phi(gk)\,dk\,dgK
=\int_{G}\phi(g)\,dg
=\int_{G/M\overline{N}}\int_{M\overline{N}}\phi(gx)\,dx\,d(gM\overline{N})
$$
for all $\phi\in C_{c}(G)$.
Finally, we equip the space $i\fa^{*}$ with the Lebesgue measure normalized so that the Fourier inversion formula
$$
\int_{i\fa^{*}}\int_{A}a^{\lambda}\phi(a)\,da\,d\lambda
=\phi(\1)
$$
holds for all $\phi\in C_{c}^{\infty}(A)$.

With the same reasoning as above it can be shown that there exists $\gamma>0$  so that
$$
\gamma\int_{G}\phi(g)\,dg
=\int_{N}\int_{M}\int_{A}\int_{\overline{N}}\phi(nma\overline{n})a^{-2\rho}\,d\overline{n}\,da\,dm\,dn \qquad \big(\phi\in C_{c}(G)\big)
$$
as the right-hand side defines a Radon measure on $G$, which is left $N$-invariant and right $MA\overline{N}$-invariant.
Let $\phi_{0}\in C_{c}(G)$ be left $K$-invariant and satisfy $\int_{G}\phi_{0}(g)dg=1$. Then
\begin{align*}
\gamma
&=\gamma\int_{G}\phi_{0}(g)\,dg
=\int_{N}\int_{M}\int_{A}\int_{\overline{N}}\phi_{0}\big(\kcomp(n)\acomp(n)\oncomp(n)ma\overline{n}\big)a^{-2\rho}\,d\overline{n}\,da\,dm\,dn\\
&=\int_{N}\int_{A}\int_{\overline{N}}\phi_{0}(a\overline{n})\acomp(n)^{2\rho}a^{-2\rho}\,d\overline{n}\,da\,dn\\
&=\int_{N}\int_{K}\int_{A}\int_{\overline{N}}\phi_{0}(ka\overline{n})\acomp(n)^{2\rho}a^{-2\rho}\,d\overline{n}\,da\,dk\,dn\\
&=\int_{N}\acomp(n)^{2\rho}\,dn\int_{G}\phi_{0}(g)\,dg=1.
\end{align*}
It thus follows that
\begin{equation}\label{eq Integral formula Bruhat decomp}
\int_{G}\phi(g)\,dg
=\int_{N}\int_{M}\int_{A}\int_{\overline{N}}\phi(nma\overline{n})a^{-2\rho}\,d\overline{n}\,da\,dm\,dn
\qquad \big(\phi\in C_{c}(G)\big).
\end{equation}

\begin{lemma}\label{Lemma Integral identity for Phi}
Let $\Phi:G\to \C$ be a continuous function satisfying
\begin{equation}\label{eq properties Phi}
\Phi(gma\overline{n})=a^{2\rho}\Phi(g)\qquad\big(g\in G, m\in M, a\in A, \overline{n}\in \overline{N}\big).
\end{equation}
Then the restriction of $\Phi$ to $N$ is integrable and
$$
\int_{K/M}\Phi(k)\,dkM
=\int_{N}\Phi(n)\,dn.
$$
\end{lemma}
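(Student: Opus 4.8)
\emph{Proof proposal.} The plan is to realise both sides of the asserted identity as the total integral over $G$ of one and the same compactly supported function, computed in two different ways via the integral formulas \eqref{eq Integral formula Iwasawa decomp} and \eqref{eq Integral formula Bruhat decomp}. First I would record two elementary consequences of \eqref{eq properties Phi}. Putting $a=\1$ and $\overline n=\1$ gives $\Phi(km)=\Phi(k)$ for $k\in K$, $m\in M$, so $\Phi|_K$ factors through $K/M$ and, by the normalization of the measure on $K/M$, the left-hand side equals $\int_K\Phi(k)\,dk$. Putting $m=\1$ and writing $g=\kcomp(g)\acomp(g)\oncomp(g)$ gives the identity $\Phi(g)=\acomp(g)^{2\rho}\Phi(\kcomp(g))$ for all $g\in G$; in particular $|\Phi(n)|\le\big(\sup_{k\in K}|\Phi(k)|\big)\acomp(n)^{2\rho}$ on $N$, and since $\int_N\acomp(n)^{2\rho}\,dn=1$ the integral on the right-hand side converges absolutely.

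Next I would fix an auxiliary function $\chi\in C_c(A\overline N)$ with $\chi\ge 0$ and $\chi\not\equiv 0$, set $C:=\int_A\int_{\overline N}\chi(a\overline n)\,d\overline n\,da>0$, and define $\phi(g):=\Phi(g)\,\chi\big(\acomp(g)\oncomp(g)\big)$. Because the Iwasawa projections are continuous and the product map $K\times A\overline N\to G$ is a diffeomorphism under which $g\mapsto\acomp(g)\oncomp(g)$ is the projection onto the second factor, $\phi$ is continuous with support contained in the image of $K\times\supp\chi$, which is compact; hence $\phi\in C_c(G)$. Applying \eqref{eq Integral formula Iwasawa decomp} and using $\acomp(ka\overline n)=a$, $\oncomp(ka\overline n)=\overline n$ and $\Phi(ka\overline n)=a^{2\rho}\Phi(k)$, the factors $a^{2\rho}$ and $a^{-2\rho}$ cancel and
\[
\int_G\phi(g)\,dg=\Big(\int_K\Phi(k)\,dk\Big)\Big(\int_A\int_{\overline N}\chi(a\overline n)\,d\overline n\,da\Big)=C\int_K\Phi(k)\,dk .
\]

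For the second evaluation I would compute the Iwasawa decomposition of $nma\overline n$: writing $n=\kcomp(n)\acomp(n)\oncomp(n)$ and moving $m$ and then $a$ to the left past $\oncomp(n)$ — using that $M$ centralizes $A$ and normalizes $\overline N$ and that $A$ normalizes $\overline N$ — yields
\[
nma\overline n=\big(\kcomp(n)m\big)\,\big(\acomp(n)a\big)\,\big((a^{-1}m^{-1}\oncomp(n)m a)\,\overline n\big),
\]
with the three factors lying in $K$, $A$ and $\overline N$ respectively. Hence $\acomp(nma\overline n)=\acomp(n)a$, $\acomp(nma\overline n)\oncomp(nma\overline n)=\acomp(n)\,a\,\big((a^{-1}m^{-1}\oncomp(n)m a)\,\overline n\big)$, and $\Phi(nma\overline n)=a^{2\rho}\Phi(n)$ by \eqref{eq properties Phi}. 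Substituting into \eqref{eq Integral formula Bruhat decomp} and again cancelling $a^{2\rho}$ against $a^{-2\rho}$ gives
\[
\int_G\phi(g)\,dg=\int_N\Phi(n)\left[\int_M\int_A\int_{\overline N}\chi\big(\acomp(n)\,a\,\big((a^{-1}m^{-1}\oncomp(n)m a)\,\overline n\big)\big)\,d\overline n\,da\,dm\right]dn .
\]
I would then evaluate the bracket: for fixed $n,m,a$ a left translation of the $\overline N$-variable removes the factor $a^{-1}m^{-1}\oncomp(n)m a$ and all dependence on $m$; integrating over $M$ (total mass $1$) and then translating $a\mapsto\acomp(n)^{-1}a$ shows that the bracket equals $C$, independently of $n$. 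Thus $\int_G\phi=C\int_N\Phi(n)\,dn$, and comparing with the first evaluation and dividing by $C>0$ yields $\int_{K/M}\Phi(k)\,dkM=\int_K\Phi(k)\,dk=\int_N\Phi(n)\,dn$.

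All the measure-theoretic manipulations (Fubini, invariance of the Haar measures on $\overline N$ and $A$) are legitimate because the right-hand sides of \eqref{eq Integral formula Iwasawa decomp} and \eqref{eq Integral formula Bruhat decomp} are genuine Radon measures and $\phi\in C_c(G)$. The one genuinely computational step — and essentially the only place where something can go wrong — is the Iwasawa decomposition of $nma\overline n$ displayed above, together with the observation that it lets one integrate the $M$- and $\overline N$-variables out against $\chi$ to a constant; I expect that to be the main (and rather mild) obstacle, the remainder being bookkeeping with the normalizations fixed in Section \ref{Section Notions}.
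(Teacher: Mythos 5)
Your proof is correct and takes essentially the same route as the paper's: a cutoff function (your $\chi\in C_c(A\overline N)$ precomposed with the Iwasawa projection $g\mapsto\acomp(g)\oncomp(g)$ is precisely a left $K$-invariant element of $C_c(G)$, which is what the paper chooses) is used to form $\phi\in C_c(G)$, and $\int_G\phi$ is evaluated via \eqref{eq Integral formula Iwasawa decomp} and \eqref{eq Integral formula Bruhat decomp}. The paper merely packages your ``bracket $=C$'' computation differently, by observing directly that $g\mapsto\int_M\int_A\int_{\overline N}\chi(gma\overline n)\,d\overline n\,da\,dm$ is constant on $G$, which avoids writing out the Iwasawa decomposition of $nma\overline n$; your explicit decomposition is correct and serves the same purpose.
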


\begin{proof}
Let $\chi\in C_{c}(G)$ be left-$K$-invariant and satisfy
$$
\int_{M}\int_{A}\int_{\overline{N}}\chi(ma\overline{n})\,d\overline{n}\,da\,dm
=1.
$$
Then by the $K$-invariance of $\chi$, and the invariance of the Haar measures on $A$ and $\overline{N}$ we have
\begin{align*}
&\int_{M}\int_{A}\int_{\overline{N}}\chi(k_{0}a_{0}\overline{n}_{0}ma\overline{n})\,d\overline{n}\,da\,dm
=\int_{M}\int_{A}\int_{\overline{N}}\chi\big(k_{0}ma_{0}a(ma)^{-1}\overline{n}_{0}(ma)\overline{n}\big)\,d\overline{n}\,da\,dm\\
&\qquad=\int_{M}\int_{A}\int_{\overline{N}}\chi(ma\overline{n})\,d\overline{n}\,da\,dm=1
\end{align*}
for all $k_{0}\in K$, $a_{0}\in A$ and $\overline{n}_{0}\in \overline{N}$. It follows that
$$
\int_{M}\int_{A}\int_{\overline{N}}\chi(gma\overline{n})\,d\overline{n}\,da\,dm
=1\qquad(g\in G).
$$
The product $\phi:=\chi\Phi$ is a compactly supported continuous function on $G$ and from (\ref{eq properties Phi}) we get that for every $g\in G$
\begin{align*}
\Phi(g)
&=\int_{M}\int_{A}\int_{\overline{N}}a^{-2\rho}\Phi(gma\overline{n})\chi(gma\overline{n})\,d\overline{n}\,da\,dm\\
&=\int_{M}\int_{A}\int_{\overline{N}}a^{-2\rho}\phi(gma\overline{n})\,d\overline{n}\,da\,dm.
\end{align*}
Now by (\ref{eq Integral formula Iwasawa decomp}) and (\ref{eq Integral formula Bruhat decomp})
\begin{align*}
\int_{K/M}\Phi(k)\,dkM
&=\int_{K}\int_{A}\int_{\overline{N}}a^{-2\rho}\phi(ka\overline{n})\,d\overline{n}\,da\,dk
=\int_{G}\phi(g)\,dg\\
&=\int_{N}\int_{M}\int_{A}\int_{\overline{N}}a^{-2\rho}\phi(nma\overline{n})\,d\overline{n}\,da\,dm\,dn
=\int_{N}\Phi(n)\,dn.
\end{align*}
This proves the lemma.
\end{proof}

\section{Spherical representation theory}\label{Section Spherical representation theory}
\subsection{Spherical representations and spherical functions}
We denote by $\hat G$ the unitary dual of $G$, i.e. the set of equivalence classes of irreducible unitary representations of $G$. As customary we only distinguish when necessary between an equivalence class $[\pi]\in \hat G$ and a representative $\pi$ which comes with a model
Hilbert space $\cH_\pi$. We note that $\hat{G}$ is equipped with a topology called the Fell topology.

Recall that $L^1(G)$ is a Banach algebra under convolution
$$
L^1(G)\times L^1(G) \to L^1(G), \qquad f_1*f_2(g) = \int_G f_1(x) f_2(x^{-1}g)\,dx.
$$
For $\pi\in\hat{G}$, $v\in \cH_\pi$ and $f \in L^{1}(G)$ we define
$$\pi(f)v:=\int_G f(g) \pi(g) v \,dg\in \cH_{\pi}\,. $$
This Hilbert space valued integral does in fact converge as $f$ is integrable.
We note that
$$
\pi(f_{1}*f_{2})=\pi(f_{1})\circ\pi(f_{2})\qquad\big(f_{1},f_{2}\in L^{1}(G)\big).
$$

The subalgebra $L^1(G)^{K\times K}$ of $K$-biinvariant functions in $L^1(G)$ is often referred to as the Hecke algebra and denoted by $\cH(G\sslash K)$.
Gelfand gave a beautiful argument to show that $\cH(G\sslash K)$ is commutative. He considered the anti-involution $\tau(g)=\theta(g^{-1})$ which fixes all elements in $A$. The induced action on functions on $G$ given by $f^\tau(g) = f\big(\tau(g)\big)$ then fixes all elements in $\cH(G\sslash K)$.  On the other hand $\tau$ reverses the order in the convolution, i.e $(f_1* f_2)^\tau = f_2^\tau * f_1^\tau$, and the commutativity of $\cH(G\sslash K)$ follows.  The following multiplicity bound is a straightforward application.

\begin{theorem} \label{thm Gelfand}
For all $\pi \in \hat G$ the space of $K$-invariants
$$\cH_\pi^K=\{ v \in \cH_\pi \mid \pi(k)v = v , k \in K\}$$
is at most one dimensional.
\end{theorem}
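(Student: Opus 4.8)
The plan is to make the commutative Hecke algebra $\cH(G\sslash K)$ act on the space $\cH_\pi^K$ and to show that this action is topologically irreducible whenever $\cH_\pi^K\neq 0$. First I would record that the orthogonal projection of $\cH_\pi$ onto $\cH_\pi^K$ is $E:=\int_K\pi(k)\,dk$, and that $E\pi(f)E=\pi(f)$ for every $f\in\cH(G\sslash K)$, which follows from $\int_K\int_K f(k_1gk_2)\,dk_1\,dk_2=f(g)$. Hence $\pi(f)$ maps $\cH_\pi$ into $\cH_\pi^K$ and vanishes on its orthogonal complement, so $\sigma(f):=\pi(f)|_{\cH_\pi^K}$ defines an algebra homomorphism $\sigma\colon\cH(G\sslash K)\to\cB(\cH_\pi^K)$, the bounded operators on $\cH_\pi^K$. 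Since $\pi$ is unitary one has $\pi(f)^{\ast}=\pi(f^{\ast})$ with $f^{\ast}(g)=\overline{f(g^{-1})}$, and $\cH(G\sslash K)$ is stable under $f\mapsto f^{\ast}$; thus $\sigma$ is a $\ast$-homomorphism whose image $\sigma(\cH(G\sslash K))$ is a \emph{commutative} $\ast$-subalgebra of $\cB(\cH_\pi^K)$, by the commutativity of $\cH(G\sslash K)$ established above.

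Next I would prove topological irreducibility of $\sigma$. Assuming $\cH_\pi^K\neq 0$, fix $0\neq v\in\cH_\pi^K$. A standard approximate identity $(u_n)$ in $C_c(G)$ gives $\pi(u_n)v\to v$, so the closed $G$-invariant subspace $\overline{\pi(L^1(G))v}$ is nonzero and hence equals $\cH_\pi$ by irreducibility of $\pi$. Since $E$ is continuous with $E\cH_\pi=\cH_\pi^K$, and since $E\pi(f)v=\pi(f^{\natural})v$ for $v\in\cH_\pi^K$, where $f^{\natural}(g):=\int_K\int_K f(k_1gk_2)\,dk_1\,dk_2\in\cH(G\sslash K)$, applying $E$ shows that $\pi(\cH(G\sslash K))v\supseteq\{E\pi(f)v:f\in L^1(G)\}$ is dense in $\cH_\pi^K$. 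Therefore every nonzero vector of $\cH_\pi^K$ is cyclic for $\sigma$; equivalently, $\cH_\pi^K$ has no proper nonzero closed $\sigma$-invariant subspace.

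To conclude I would invoke the spectral theorem. For a self-adjoint element $f=f^{\ast}\in\cH(G\sslash K)$ the operator $\sigma(f)$ is bounded, self-adjoint, and commutes with all of $\sigma(\cH(G\sslash K))$; hence each $\sigma(h)$ commutes with every spectral projection of $\sigma(f)$, so the range of each such projection is a closed $\sigma$-invariant subspace and, by the previous step, is $0$ or all of $\cH_\pi^K$. It follows that $\sigma(f)$ has one-point spectrum, i.e. it is a scalar operator. Since every element of $\cH(G\sslash K)$ is a $\C$-linear combination of self-adjoint ones, $\sigma(\cH(G\sslash K))=\C\cdot\mathrm{id}$; then every one-dimensional subspace of $\cH_\pi^K$ is closed and $\sigma$-invariant, and topological irreducibility forces $\dim\cH_\pi^K\leq 1$. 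The case $\cH_\pi^K=\{0\}$ is trivial.

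The step I expect to be the main obstacle is the second paragraph: one must carefully justify that $\pi(L^1(G))v$ is genuinely dense in $\cH_\pi$ — using the approximate identity, the $G$-invariance of its closure, and irreducibility of $\pi$ — and that transporting this density down by the bounded projection $E$ stays inside $\pi(\cH(G\sslash K))v$, via the identity $E\pi(f)E=\pi(f^{\natural})$. Once that is settled, the first and third paragraphs are formal consequences of the $\ast$-algebra structure and the spectral theorem.
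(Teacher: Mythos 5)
Your proposal follows the same route as the paper: reduce to showing that $\cH_\pi^K$ is (topologically) irreducible as a module over the commutative Hecke $*$-algebra $\cH(G\sslash K)$ — by using an approximate identity, irreducibility of $\pi$, and averaging over $K$ on both sides — and then conclude $\dim\cH_\pi^K\le 1$. The only difference is that where the paper simply invokes the generalized Schur lemma for a commutative $*$-closed subalgebra of $\End(\cH_\pi^K)$, you make that final step explicit via spectral projections; the underlying argument is identical.
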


\begin{proof}
We first claim that $\cH_{\pi}^{K}$ is an irreducible module for $\cH(G\sslash K)$.
To prove the claim we will show that every $v\in \cH_{\pi}^{K}\setminus\{0\}$ is cyclic for $\cH(G\sslash K)$.
We fix $v\in \cH_{\pi}^{K}\setminus\{0\}$ and set $E=\pi\big(C_{c}(G)\big)v$. Since $E$ is non-zero and $G$-invariant it is dense in $\cH_{\pi}$ as $\pi$ is irreducible.
If $P_{\pi}:\cH_{\pi}\to\cH_{\pi}^{K}$ denotes the projection $v\mapsto \int_{K}\pi(k)v\,dk$, then $P_{\pi}(E)$ is dense in $\cH_{\pi}^{K}$.
Left and right averaging over $K$ yields that $P_{\pi}(E)\subseteq\pi\big(\cH(G\sslash K)\big)v$.
This proves the claim.

As $\cH_{\pi}^{K}$ is irreducible and $\pi\big(\cH(G\sslash K)\big)\subseteq \End(\cH_{\pi}^{K})$ is a commutative and $*$-closed algebra, the generalized Schur's Lemma implies that $\dim(\cH_{\pi}^{K})\leq 1$.
\end{proof}

We now define the spherical unitary dual
$$\hat{G}_s:= \{ \pi\in \hat G\mid \cH_\pi^K \neq \{0\}\}.$$
We equip $\hat{G}_s$ with the subspace topology from $\hat{G}$.
We say that a representation $\pi\in\hat{G}$ is spherical if $\pi\in \hat{G}_{s}$.
For each $\pi \in \hat G_s$ the space $\cH_\pi^K$ is $1$-dimensional and hence spanned by an element $\zeta_\pi$ which we request to be normalized, i.e. $\|\zeta_\pi\|=1$.
The spherical function associated to $\pi$ is defined by
$$\phi_\pi(g) = \la \pi(g)\zeta_\pi, \zeta_\pi\ra \qquad (g \in G).$$
It is independent of the normalized choice of $\zeta_{\pi}$.
Note that $\phi_\pi$ is positive definite, $K$-biinvariant and normalized, i.e. $\phi_\pi(\1)=1$.
In order to illustrate the power of representation theoretic methods let us verify the mean value property
\begin{equation}\label{mean}
\phi_\pi(g)\phi_\pi(h)
= \int_K \phi_\pi(gkh) \,dk \qquad (g, h\in G).
\end{equation}
We recall that the projection operator
$$
P_\pi: \cH_\pi \to \cH_\pi^K , \quad v \mapsto \la v,\zeta_\pi\ra \zeta_\pi
$$
is given by the $K$-average
$$
P_\pi(v)
= \int_K \pi(k) v \,dk.
$$
The identity \eqref{mean} is now shown by the computation
\begin{align*}
\int_K \phi_\pi (gkh)\,dk &
= \int_K \la \pi(gkh)\zeta_\pi, \zeta_\pi\ra  dk
 = \left\la \int_K \pi(k) \pi(h)\zeta_{\pi} \,dk, \pi(g^{-1})\zeta_\pi \right\ra\\
&= \Big\la P_\pi (\pi(h)\zeta_\pi), \pi(g^{-1})\zeta_\pi\Big\ra
 = \Big\la \la\pi(h)\zeta_\pi, \zeta_\pi\ra \zeta_\pi, \pi(g^{-1})\zeta_\pi\Big\ra\\
&= \phi_\pi(g)\phi_\pi(h).
\end{align*}

\subsection{Abstract Plancherel Theorem for $L^2(Z)$}
We recall the abstract Plancherel decomposition, the Fourier transform and its inverse, often referred to as wave packet transform. Before we do that we give a short digression on the dual representation.

If $\pi\in\hat G$ is spherical then also its dual $\pi'$ is spherical. In fact, the elements of $\cH_{\pi}^{K}$ are fixed by the projection $P_{\pi}$.
Therefore, if $\cH_{\pi}^{K}\neq \{0\}$, then the dual projection $P_{\pi}':\cH_{\pi}'\to\cH_{\pi}' $ is non-zero and every element in its image is $K$-fixed. Therefore, $\pi'$ is spherical.

For $[\pi]\in\hat G$ we note that any two $G$-invariant inner products on $\cH_\pi$ are the same only up to multiplication by a positive scalar. However for the Hilbert tensor product $\cH_\pi \hat \otimes\cH_{\pi}'$ there is a canonical $G$-invariant inner product: $\cH_\pi \hat \otimes\cH_{\pi}'$ naturally identifies with the Hilbert space $\mathrm{HS}_{\pi}$ of Hilbert-Schmidt operators on $\cH_\pi$, which carries the canonical $G$-invariant inner product
$$
\mathrm{HS}_{\pi}\times\mathrm{HS}_{\pi}\to\C,\quad (A,B)\mapsto tr(A\circ B^{\dagger}).
$$
Below we will be interested in the one-dimensional space $\cM_{\pi}=(\cH_{\pi}')^K$ and
the subspace $\cH_\pi \otimes \cM_{\pi}\subset \cH_\pi\otimes \cH_{\pi}'$ equipped with the canonical inner product.
 Note that $\cM_\pi =(\cH_{\pi}')^{K}=(\cH_\pi^K)'$ and so
$\cH_\pi\otimes \cM_\pi = \Hom_\C (\cH_\pi^K, \cH_\pi)$.

We define the Fourier transform $\cF f(\pi)\in \Hom_\C (\cH_\pi^K, \cH_\pi)$ of a function $f \in C_c(Z)=C_c(G)^K$ at $\pi\in\hat{G}_{s}$ by
$$
\cF f(\pi)v= \pi(f)v\qquad (v\in \cH_{\pi}^{K}).
$$
We may interpret $\cF f(\pi)$ as an element in $\cH_{\pi}\otimes\cM_{\pi}\simeq \Hom_\C (\cH_\pi^K, \cH_\pi)$.

Harish-Chandra proved in \cite[Theorem 7]{HC_Type1} that $G$ is of type $I$ in the terminology of Murray and von Neumann, which makes the abstract Plancherel theory in \cite{Dixmier} applicable.
According to the abstract Plancherel theorem for $Z$ the families
$$
\cF f\in  \big(\cH_{\pi}\otimes \cM_{\pi}\big)_{\pi\in \hat{G}_{s}}\qquad \big(f\in C_{c}(Z)\big)
$$
are Borel measurable and there exists a unique positive Borel measure $\mu$ on $\hat{G}_{s}$, called the Plancherel measure of $Z$, so that the Fourier transform
$$
\cF:C_{c}(Z)\to \big(\cH_{\pi}\otimes \cM_{\pi}\big)_{\pi\in \hat{G}_{s}}
$$
extends to a $G$-equivariant unitary isomorphism
\begin{equation} \label{eq abstract Plancherel thm}
\cF: \big(L,L^2(Z)\big)\to \left(\int_{\hat G_s}^\oplus \pi \otimes \mathrm{id}_{\cM_{\pi}} \,d\mu(\pi),
    \int_{\hat G_s}^\oplus \cH_\pi \otimes \cM_{\pi} \,d\mu(\pi)\right).
\end{equation}

The inverse of $\cF$ is for compactly supported measurable sections $v: \pi \mapsto v_\pi\otimes \eta_{\pi}$ of $\big(\cH_{\pi}\otimes \cM_{\pi}\big)_{\pi\in \hat{G}_{s}}$ given by
$$
\cF^{-1}\big((v_\pi\otimes\eta_{\pi})_{\pi \in \hat G_s}\big)(gK) = \int_{\hat G_s} \eta_{\pi}\big(\pi(g^{-1})v_\pi\big) \,d\mu(\pi)\qquad (g\in G).
$$
For $K$-invariant functions $f\in C_c(Z)^{K}=C_{c}(G)^{K\times K}$ the Fourier transform $\cF(f)(\pi)$ at $\pi$ maps the one-dimensional space $\cH_{\pi}^{K}$ to itself, hence it is given by multiplication by a scalar $\cF_{\mathrm{sph}}f (\pi)$. The corresponding map
$$
\cF_{\mathrm{sph}}:C_{c}(G)^{K\times K}\to L^{2}(\hat{G}_{s},\mu)
$$
is called the spherical Fourier transform and is given by
$$
\cF_{\mathrm{sph}}f(\pi)=\int_G f(g) \phi_\pi(g) \,dg \qquad (f\in C_{c}(G)^{K\times K},\pi \in \hat G_s).
$$
The spherical Fourier transform extends to a unitary isomorphism
$$
\cF_{\mathrm{sph}}:L^{2}(G)^{K\times K}\to L^{2}(\hat{G}_{s},\mu).
$$
Its inverse is for compactly supported continuous functions $\psi$ on $\hat{G}_{s}$ given by
$$\cF_{\mathrm{sph}}^{-1}\psi (g) = \int_{\hat G_s} \psi(\pi) \phi_\pi(g^{-1})\,d\mu(\pi)\qquad(g\in G).$$

Next we wish to determine the support of the Plancherel measure.

\subsection{Irreducible spherical representations}
We assume that the reader is familiar with the basic theory of Harish-Chandra modules. The prototypical Harish-Chandra module is the space $V_{\pi}$ of $K$-finite vectors of some $\pi\in \hat G$, say
$V_\pi = \cH_\pi^{K-{\rm finite}}$. It was the discovery of Harish-Chandra that $V_\pi$ is an irreducible module for $\gf$.

Let us recall the (minimal) principal series representations of $G$. Let $\sigma\in\hat{M}$ and let $\cH_\sigma$ be a finite dimensional model Hilbert space for $\sigma$. Recall from (\ref{eq Def rho}) that $\rho$ is the half-sum of all positive roots with respect to $P$ counted with multiplicity. For $\lambda\in\af_\C^*$ we define a representation of $\overline{P}$ on $\cH_\sigma$ by
$$
\sigma_\lambda(m a\overline{n}) = \sigma(m) a^{\lambda - \rho} \qquad (ma\overline{n}\in \overline{P} = MA \overline{N}).
$$
The smooth principal series representation with parameters $(\sigma,\lambda)$ is the smooth representation given by the left-regular action of $G$ on the Fr{\'e}chet space
\begin{equation} \label{def princ}
\cH_{\sigma, \lambda}^\infty= \{ f \in C^\infty (G, \cH_\sigma)\mid f(g\overline{p}) = \sigma_\lambda(\overline{p})^{-1}  f (g)\}.
\end{equation}
We denote by $V_{\sigma, \lambda}$ its Harish-Chandra module of $K$-finite vectors.
Restriction to $K$ yields an isomorphism of $K$-modules
$$
\cH_{\sigma, \lambda}^\infty\simeq C^\infty(K\times_M \cH_\sigma)
:=\{ f \in C^\infty(K, \cH_\sigma) \mid f(km) = \sigma_\lambda(m)^{-1} f (k), k\in K, m\in M\}.
$$
Integration over $K$ yields the non-degenerate $G$-invariant bilinear pairing
\begin{equation} \label{pairing}
\cH_{\sigma, \lambda}^\infty  \times \cH_{\sigma', -\lambda}^\infty, \quad (f_1, f_2)\mapsto
\int_K \big(f_1(k), f_2(k)\big)_\sigma \,dk,
\end{equation}
where $(\cdot, \cdot)_\sigma$ denotes the natural pairing of $\cH_\sigma$ and its dual $\cH_{\sigma'} = \cH_{\sigma}'$. We define
$$
\cH_{\sigma, \lambda}
:=\big\{f: G\to \cH_{\sigma}\mid f(g\overline{p})=\sigma_{\lambda}(\overline{p})^{-1}f(g)\text{ and }f|_{K}\in L^2(K\times_M \cH_\sigma)\big\}.
$$
and equip it with the Hilbert space structure given by the inner product on $L^2(K\times_M \cH_\sigma)$. Note that $\cH_{\sigma,\lambda}$ and  $L^2(K\times_M \cH_\sigma)$ are isomorphic as unitary representations of $K$. The left regular representation $\pi_{\sigma,\lambda}$ of $G$ on $\cH_{\sigma, \lambda}$ is called a representation of the principal series. It is unitary if and only if  $\lambda\in i\af^*$. The Harish-Chandra module $V_{\sigma,\lambda}$ of $\pi_{\sigma,\lambda}$ is the space consisting of all $f\in \cH_{\sigma,\lambda}^{\infty}$ for which the restriction $f|_{K}$ is $K$-finite, i.e. $f|_{K}$ is a regular section of $K\times_{M}\cH_{\sigma}\to K/M$.

We now state the Casselman subrepresentation theorem.

\begin{theorem}\label{Thm Casselman subrep thm}
Every irreducible Harish-Chandra module $V$ embeds into the Harish-Chandra module of a principal
series representation, i.e. there exists an injective $(\gf,K)$-morphism
$V\hookrightarrow V_{\sigma, \lambda}$ for some pair $(\sigma,\lambda)\in\hat{M}\times\fa_{\C}^{*}$.
\end{theorem}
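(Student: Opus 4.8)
\emph{Proof strategy.} The plan is to run Casselman's argument through the Jacquet module. Let $V$ be an irreducible Harish-Chandra module. Being irreducible it is cyclic, hence finitely generated over $U(\gf)$, and since $U(\gf)$ has countable dimension over $\C$, Dixmier's form of Schur's lemma shows that $\Zc(\gf)$ acts on $V$ by a single character $\chi$. I would then introduce the Jacquet module $J(V):=V/\overline{\nf}V$, the space of $\overline{\nf}$-coinvariants of $V$. Since $\overline{\pf}=\mf\oplus\af\oplus\overline{\nf}$ normalizes $\overline{\nf}$, the quotient $J(V)$ inherits an action of $\overline{\pf}/\overline{\nf}\cong\mf\oplus\af$, as well as a compatible action of $M=Z_K(A)$ descending from the $K$-action on $V$ (possible since $M$ normalizes $\overline{\nf}$); we regard the resulting $(\mf\oplus\af,M)$-module as a Harish-Chandra module for the Levi factor $MA$ of $\overline{P}$, inflated to $\overline{P}$ by letting $\overline{N}$ act trivially.

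The first step is a Frobenius reciprocity: evaluation at the base point, $\Phi\mapsto\bigl(v\mapsto\Phi(v)(\1)\bigr)$, should give an isomorphism
$$
\Hom_{(\gf,K)}\bigl(V,\,V_{\sigma,\lambda}\bigr)\;\xrightarrow{\ \sim\ }\;\Hom_{(\mf\oplus\af,\,M)}\bigl(J(V),\,\cH_\sigma\otimes\C_{\lambda-\rho}\bigr),
$$
with inverse sending $T$ to the morphism $v\mapsto f_v$, where $f_v(k\overline{p})=\sigma_\lambda(\overline{p})^{-1}\,T(k^{-1}\cdot v)$; here the Iwasawa decomposition $G=K\overline{P}$ makes $f_v$ well defined, and the $K$-finiteness of $V$ makes it smooth and $K$-finite, so $f_v\in V_{\sigma,\lambda}$. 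Checking that the two assignments are mutually inverse $(\gf,K)$-morphisms is routine. Since $V$ is irreducible, every non-zero element of the left-hand side is injective. Hence it remains to produce, for a suitable $(\sigma,\lambda)\in\hat{M}\times\af_{\C}^{*}$, a non-zero $(\mf\oplus\af,M)$-morphism out of $J(V)$; and for this it suffices that $J(V)$ be a non-zero finite-dimensional $(\mf\oplus\af,M)$-module, since it then admits an irreducible quotient, which --- $\af$ being central, so acting by a scalar $\mu$ --- has the form $\cH_\sigma\otimes\C_\mu$ with $\sigma\in\hat{M}$, and we take $\lambda:=\mu+\rho$ and run the reciprocity backwards.

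Everything thus reduces to two facts about $J(V)$. That $J(V)$ is finite-dimensional is Casselman's theorem: $V$ is finitely generated over $U(\overline{\nf})$, so a finite-dimensional generating subspace maps onto $J(V)$; moreover, by the Casselman--Osborne lemma $\Zc(\gf)$ acts on the homology $H_{\bullet}(\overline{\nf},V)$, hence on $J(V)=H_{0}(\overline{\nf},V)$, through the Harish-Chandra homomorphism, so $\chi$ already confines the generalized $\af$-weights of $J(V)$ to a fixed finite set. The non-vanishing $J(V)\neq0$ is the main obstacle and the real content of the theorem: it suffices to show $\bigcap_{k}\overline{\nf}^{\,k}V=0$, for then $\overline{\nf}V=V$ would force $V=0$; this separatedness --- equivalently, injectivity of the natural map from $V$ into its formal Jacquet completion $\widehat{V}=\varprojlim_{k}V/\overline{\nf}^{\,k}V$ --- is a theorem of Casselman, and it is exactly here that the finite generation and $\Zc(\gf)$-finiteness of $V$ are used in an essential way. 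Granting it, the reciprocity above delivers the injection $V\hookrightarrow V_{\sigma,\lambda}$. I would also point out the analytic counterpart of this argument, closer to the later sections of the paper: in place of $J(V)$ one studies the asymptotic expansion along $A^{--}$ of matrix coefficients of the Casselman--Wallach globalization of $V$, whose leading exponents furnish non-zero $(\gf,K)$-maps of $V$ into principal series --- but the existence of such a leading exponent is again precisely the non-vanishing statement above.
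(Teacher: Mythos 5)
Your proposal takes essentially the same route the paper indicates: Frobenius reciprocity for $(\gf,K)$-modules together with the finite-dimensionality and, crucially, the non-vanishing of $H_0(\overline{\nf},V)=V/\overline{\nf}V$, with that deep non-vanishing input cited to Casselman rather than proved --- exactly as the paper does by referring to Wallach. Your explicit reciprocity maps (evaluation at $\1$ and the formula $f_v(k\overline{p})=\sigma_\lambda(\overline{p})^{-1}T(k^{-1}\cdot v)$, with the $\rho$-shift giving the $\af$-character $\lambda-\rho$) are correct, so the argument is sound.
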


The theorem is a consequence of the non-vanishing of the zeroth $\overline{\nf}$-homology
of $V$, i.e. $H_0(V, \overline{\nf} ) = V/ \overline{\nf} V\neq \{0\}$, and Frobenius reciprocity for $(\gf, K)$-modules. See \cite[Theorem 3.8.3 \& Lemma 3.8.2]{W}.

We call a Harish-Chandra module $V$ spherical provided that $V^K\neq \{0\}$. If $\sigma=\1$, then we abbreviate and write $\pi_\lambda$, $V_{\lambda}$, $\cH_{\lambda}^{\infty}$ and $\cH_{\lambda}$ instead of $\pi_{\1, \lambda}$, $V_{\1,\lambda}$, $\cH_{\1,\lambda}^{\infty}$ and $\cH_{\1,\lambda}$. Note that $V_\lambda^K \simeq\C \1_{K/M} \subset \C[K\times_M V_\1]\simeq V_\lambda$, where $\1_{K/M}$ is the constant function $1$ on $K/M$.

Theorem \ref{Thm Casselman subrep thm} has the following corollary.

\begin{cor} \label{cor Cass}
Let $V$ be an irreducible spherical Harish-Chandra module. Then there exists an injective $(\gf,K)$-morphism $V\hookrightarrow V_\lambda$ for some $\lambda\in\fa_{\C}^{*}$.
\end{cor}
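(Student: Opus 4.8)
The plan is to deduce this immediately from Theorem \ref{Thm Casselman subrep thm} by showing that the existence of a nonzero $K$-fixed vector in $V$ forces the inducing $M$-type to be trivial. So the work is entirely in computing the $K$-invariants of a general principal series.

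First I would apply Theorem \ref{Thm Casselman subrep thm} to get an injective $(\gf,K)$-morphism $\iota\colon V\hookrightarrow V_{\sigma,\lambda}$ for some $(\sigma,\lambda)\in\hat M\times\fa_\C^*$. Since $\iota$ is in particular $K$-equivariant, it restricts to an injection $V^K\hookrightarrow V_{\sigma,\lambda}^K$. As $V$ is spherical, $V^K\neq\{0\}$, hence $V_{\sigma,\lambda}^K\neq\{0\}$.

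Next I would identify $V_{\sigma,\lambda}^K$. Using the $K$-isomorphism $\cH_{\sigma,\lambda}^\infty|_K\simeq C^\infty(K\times_M\cH_\sigma)$ and passing to $K$-finite vectors, a $K$-invariant element of $V_{\sigma,\lambda}$ is a $K$-invariant regular section of $K\times_M\cH_\sigma\to K/M$; under the left regular action such a section is constant as a function $K\to\cH_\sigma$, say with value $v_0$, and the covariance $f(km)=\sigma_\lambda(m)^{-1}f(k)=\sigma(m)^{-1}f(k)$ forces $v_0\in\cH_\sigma^M$, while conversely every $v_0\in\cH_\sigma^M$ yields such a section. Thus $V_{\sigma,\lambda}^K\cong\cH_\sigma^M$ (this is just Frobenius reciprocity $\Hom_K(\1,\Ind_M^K\sigma)=\Hom_M(\1,\sigma)$, and for $\sigma=\1$ it recovers the identification $V_\lambda^K\simeq\C\1_{K/M}$ noted above). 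Combining with the previous step, $\cH_\sigma^M\neq\{0\}$; since $\cH_\sigma^M$ is an $M$-invariant subspace of $\cH_\sigma$ and $\sigma$ is irreducible, we get $\cH_\sigma=\cH_\sigma^M$, i.e. $\sigma=\1$. Then $V_{\sigma,\lambda}=V_{\1,\lambda}=V_\lambda$ and $\iota\colon V\hookrightarrow V_\lambda$ is the asserted embedding.

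The argument is otherwise routine; the only point needing a word of care is the passage from $\cH_\sigma^M\neq\{0\}$ to $\sigma=\1$, since $M=Z_K(\af)$ need not be connected, so possessing an $M$-fixed vector is strictly stronger than possessing an $M^\circ$-fixed vector. The conclusion uses only that the fixed vectors span an $M$-subrepresentation together with irreducibility of $\sigma$, and no analytic input beyond Theorem \ref{Thm Casselman subrep thm} enters.
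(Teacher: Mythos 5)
Your proof is correct and follows exactly the paper's approach: apply the Casselman subrepresentation theorem, note that a nonzero $K$-fixed vector of $V$ maps to a nonzero $K$-fixed vector of $V_{\sigma,\lambda}$, and conclude $\sigma=\1$ from the Frobenius-reciprocity identification $V_{\sigma,\lambda}^K\cong\cH_\sigma^M$. The only difference is cosmetic: the paper asserts that $\C[K\times_M V_\sigma]^K\neq\{0\}$ if and only if $\sigma=\1$ without elaboration, whereas you spell out the Frobenius-reciprocity computation and the step from $\cH_\sigma^M\neq\{0\}$ to $\sigma=\1$ via irreducibility.
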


\begin{proof}
Let $\sigma\in \hat{M}$ and $\lambda\in\fa_{\C}^{*}$ be so that there exists an injective $(\gf,K)$-morphism $V\hookrightarrow V_{\sigma,\lambda}$. The image of a non-zero $K$-fixed vector of $V$ will be $K$-fixed in $V_{\sigma, \lambda}\simeq \C[K\times_M V_\sigma]$. But $\C[K\times_M V_\sigma]^K\neq \{0\}$ if and only if $\sigma=\1$.
\end{proof}

We need the following result of Kostant \cite[Theorem 1]{Kos}.

\begin{prop}\label{Prop irreducibility}
The Harish-Chandra module $V_\lambda$ is irreducible for every $\lambda\in  i \af^*$.
\end{prop}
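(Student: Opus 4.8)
\emph{Proof strategy.} The plan is to deduce the irreducibility of $V_\lambda$ from its unitarity together with the one-dimensionality of $V_\lambda^K$. Fix $\lambda\in i\af^*$, so that $\pi_\lambda$ is unitary and $V_\lambda$ carries a positive definite invariant Hermitian form, namely the restriction to the $K$-finite vectors of the inner product of $\cH_\lambda$. Since $V_\lambda$ is admissible of finite length, this form makes $V_\lambda$ \emph{semisimple}: write $V_\lambda=\bigoplus_j U_j$ with each $U_j$ irreducible. As $V_\lambda^K=\C\,\1_{K/M}$ is one-dimensional, every submodule of $V_\lambda$ either contains $\1_{K/M}$ or has vanishing $K$-fixed subspace; hence exactly one summand, say $U_0$, is spherical and $U_j^K=\{0\}$ for $j\neq 0$, and moreover $U_0=U(\gf)\,\1_{K/M}$. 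Thus $V_\lambda$ is irreducible if and only if $\1_{K/M}$ is a cyclic vector, equivalently $\End_{(\gf,K)}(V_\lambda)=\C$, equivalently $V_\lambda$ has no nonzero non-spherical summand, and this last statement is what I aim to prove.

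To estimate $\End_{(\gf,K)}(V_\lambda)$ I would use Casselman's Frobenius reciprocity for the minimal principal series: for a Harish--Chandra module $W$ one has a natural isomorphism $\Hom_{(\gf,K)}(W,V_\lambda)\cong\Hom_{MA}\big(H_0(\overline{\nf},W),\C_{\lambda-\rho}\big)$, where $H_0(\overline{\nf},W)=W/\overline{\nf}W$ is the finite-dimensional space of $\overline{\nf}$-coinvariants (compare the remarks following Theorem \ref{Thm Casselman subrep thm}). Applying this with $W=V_\lambda$ and Casselman's description of the $\overline{\nf}$-homology of an induced module --- $H_0(\overline{\nf},V_\lambda)$ has composition factors $\C_{w\lambda-\rho}$ indexed by $w\in W$, matching the exponents $a^{w\lambda-\rho}$ in the Harish--Chandra expansion of $\phi_\lambda$ --- one obtains $\dim\End_{(\gf,K)}(V_\lambda)\le\#\{w\in W:\ w\lambda=\lambda\}=|W_\lambda|$. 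If $\lambda$ is \emph{regular} this forces $\End_{(\gf,K)}(V_\lambda)=\C$, and by semisimplicity $V_\lambda$ is irreducible. (Equivalently: for regular $\lambda\in i\af^*$ the Harish--Chandra $c$-function is finite and nonzero by the Gindikin--Karpelevich formula, the standard intertwining operators $V_\lambda\to V_{w\lambda}$ are all bijective, and there is no nontrivial self-intertwiner.)

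The hard part will be the \emph{singular} $\lambda\in i\af^*$, where $|W_\lambda|>1$ and the bound above is inconclusive: one must show that, although $\C_{\lambda-\rho}$ occurs $|W_\lambda|$ times among the composition factors of $H_0(\overline{\nf},V_\lambda)$, the space $\Hom_{MA}(H_0(\overline{\nf},V_\lambda),\C_{\lambda-\rho})$ is nonetheless one-dimensional --- i.e. the $A$-action on the $(\lambda-\rho)$-generalized weight space of $H_0(\overline{\nf},V_\lambda)$ is a single Jordan block, equivalently the extensions produced by the Bruhat stratification of $V_\lambda$ are as non-split as possible. This non-splitness on the walls is exactly the content of Kostant's analysis in \cite{Kos}: his reducibility criterion requires $\langle\lambda,\alpha^\vee\rangle$ to be a nonzero integer for some $\alpha\in\Sigma^+$, which never happens for $\lambda\in i\af^*$, whence the proposition. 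A second, more structural route is via the Knapp--Stein $R$-group: for the trivial $M$-type one has $W_{\1,\lambda}=W_\lambda$, and since the rank-one Plancherel densities $\mu_\alpha$ vanish on the walls $\langle\lambda,\alpha^\vee\rangle=0$, every reflection in $W_\lambda$ already lies in the subgroup $W_{\1,\lambda}^{0}$; hence the $R$-group $W_{\1,\lambda}/W_{\1,\lambda}^{0}$ is trivial and $\dim\End_{(\gf,K)}(V_\lambda)=1$. This second argument, however, draws on the Plancherel theory of $G$ itself and is therefore less in keeping with the present exposition. In either case, combining the chosen input with the regular case completes the proof.
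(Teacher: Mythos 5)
There is an important mismatch of expectations here: the paper does not prove Proposition \ref{Prop irreducibility} at all. It is stated as a quotation of Kostant's result \cite[Theorem 1]{Kos}, so there is no internal argument to compare yours with. Measured against that, your proposal is ultimately of the same nature, because the only place where the full strength of the statement is needed --- singular $\lambda\in i\af^*$ --- is settled in your write-up by invoking Kostant's reducibility criterion (or, in your alternative, the Knapp--Stein $R$-group computation), i.e.\ essentially the very theorem the paper cites. What you genuinely prove is the regular case, and that part is sound: unitarity of $\pi_\lambda$ makes $V_\lambda$ semisimple, $\dim V_\lambda^K=1$ reduces irreducibility to $\End_{(\gf,K)}(V_\lambda)=\C$, and Frobenius reciprocity together with Casselman's description of $H_0(\overline{\nf},V_\lambda)$ bounds $\dim\End_{(\gf,K)}(V_\lambda)$ by $\#\{w\in W\mid w\lambda=\lambda\}$. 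This is in effect Bruhat's irreducibility theorem for regular unitary parameters, and it is a genuine (if not self-contained, since it quotes the Jacquet-module computation) addition to what the paper offers.

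As a stand-alone proof of the proposition, however, there is a gap exactly where you say the hard part is. The claim that for singular $\lambda$ the $(\lambda-\rho)$-generalized weight space of $H_0(\overline{\nf},V_\lambda)$ is a single Jordan block for $A$ (``the extensions are as non-split as possible'') is precisely what would have to be established, and you do not establish it; you replace it by an appeal to Kostant, which is the citation the paper already makes. Two further cautions. First, the criterion as you state it --- reducibility requires $\langle\lambda,\alpha^\vee\rangle$ to be a nonzero integer for some $\alpha\in\Sigma^{+}$ --- is not Kostant's condition in general; the reducibility set depends on the root multiplicities $m_\alpha$, $m_{2\alpha}$ (via the Gindikin--Karpelevich $\Gamma$-factors), although for purely imaginary $\lambda$ the conclusion is unaffected since the reducibility points are in any case real and nonzero. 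Second, the $R$-group route, while correct in outline, imports Harish-Chandra's Plancherel densities for $G$ and the full Knapp--Stein theory, which is far heavier input than this paper intends to presuppose --- a point you rightly flag yourself. So: keep the regular-case argument if you like, but for the proposition as stated you have, like the paper, a citation rather than a proof, and it should be presented as such.
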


From the proposition we obtain a map
$$
i \af^* \to \hat G_s, \quad \lambda\mapsto [\pi_\lambda].
$$
Harish-Chandra has shown that the fibers of this map are the Weyl group orbits, i.e.
\begin{equation}\label{W inv}  [\pi_\lambda]=[\pi_{\lambda'}]\quad\hbox{if and only if}\quad   \lambda \in W\lambda'.\end{equation}
We will sketch a proof of \eqref{W inv} later in Remark \ref{rmk Symmetry spherical functions}.
We thus obtain an inclusion $i\af^*/W\hookrightarrow \hat G_s$.
In Theorem \ref{thm planch} it will be shown that
$\supp \mu = i\af^*/W$ and that $\mu$ is in the measure class of the Lebesgue measure.

For many applications the space  $\cH_{\lambda}$ is too small. We therefore introduce the space of so-called generalized vectors. We recall that a Gelfand triple consists of a Hilbert space $\cH$, a Fr\'echet space $E$ and a dual Fr\'echet space $F$ such that $E\subset \cH \subset F$ with $F'\simeq E$ and $E'\simeq F$. In our context
$\cH=\cH_\lambda$ and $E=\cH_\lambda^\infty$. We define the space of generalized vectors $\cH_\lambda^{-\infty}:=(\cH_{-\lambda}^\infty)'$. Now from $\cH_\lambda'\simeq \cH_{-\lambda}$ we obtain $\cH_\lambda$ is naturally contained in $ \cH_{\lambda}^{-\infty}$.
As $K$-representation $\cH_{\lambda}^{-\infty}$ is isomorphic to the space $\cD'(K/M)$ of distributions on $K/M$. In view of \cite[Chapitre III, Th{\'e}or{\`e}me XIV]{Schwartz} the dual of the latter is $C^{\infty}(K/M)$, which is isomorphic to $\cH_{\lambda}^{\infty}$ as $K$-representation. Therefore, the triple
$$ \cH_\lambda^\infty \subset \cH_\lambda\subset \cH_{\lambda}^{-\infty}$$
is a Gelfand triple with $G$-equivariant inclusions. Important for us is the mollifying property
$$C_c^\infty(G)\times \cH_{\lambda}^{-\infty} \to \cH_\lambda^\infty,
\quad (f,\xi)\mapsto \pi_{\lambda}(f)\xi:=\int_G f(g)\pi_{\lambda}(g)\xi \,dg, $$
where $\pi_{\lambda}(g)\xi=\xi\circ \pi_{-\lambda}(g^{-1})$ is the usual dual action.

We end this section with choosing a specific $K$-fixed vector in $\cH_{\lambda}$, which will be used in the remainder of this text.
For every $\lambda\in \fa_{\C}^{*}$ we define $\eta_\lambda\in \cH_{\lambda}$ to be the element so that
$$
\eta_{\lambda}(k)=1\qquad (k\in K).
$$
Then $\cH_{\lambda}^{K}=\C \eta_{\lambda}$.
We recall the Iwasawa projection $\acomp : G \to A$ with respect to the Iwasawa decomposition $G=KA\overline{N}$. We have
\begin{equation}\label{eq eta_lambda}
\eta_{\lambda}(g)=\acomp(g)^{\rho-\lambda}\qquad \big(\lambda\in \fa_{\C}^{*}, g\in G\big).
\end{equation}

\subsection{Harish-Chandra's spherical functions}
We abbreviate the notation for spherical functions and set $\phi_\lambda= \phi_{\pi_\lambda}$ whenever $\lambda\in  i\af^*$. In view of (\ref{eq eta_lambda}) we have
\begin{equation}\label{phi lambda}
\phi_\lambda(g)  = \la \pi_\lambda(g)\eta_\lambda, \eta_\lambda\ra
  = \int_K \acomp(g^{-1}k)^{\rho -\lambda} \,dk \qquad(\lambda\in i\fa^{*}, g\in G).
 \end{equation}
We can define spherical functions $\phi_{\lambda}$ for all $\lambda\in \af_\C^*$ by using the bilinear pairing of $V_\lambda$ and $V_{-\lambda}$ and taking the matrix coefficient of the two normalized spherical vectors, i.e.
$$\phi_\lambda(g) := (\pi_\lambda(g) \eta_\lambda, \eta_{-\lambda})\qquad(g\in G).$$
Analytic continuation of \eqref{phi lambda} then yields Harish-Chandra's integral formula
$$
\phi_\lambda(g)=\int_K \acomp(g^{-1}k)^{\rho -\lambda} \,dk \qquad (\lambda\in \af_\C^*, g\in G)
$$
in general.

\subsection{$Z$-tempered representations}
In \cite{B} Bernstein developed from first principles a general theory of $Z$-tempered representations for  homogeneous spaces $Z=G/H$. We will briefly recall this for our special situation of $Z=G/K$. See also the summary in \cite{KS}. A crucial object is the volume weight function on $Z$ which is defined as follows. Let $B\subset G$ be a compact neighborhood of $\1$. Then we define a volume weight of $Z$ as
$$ {\bf v}(z) = \vol_Z(Bz) \qquad (z \in Z).$$
The choice  of $B$ is irrelevant as another choice yields a comparable weight, i.e. the two weights are bounded against each other by positive constants. It is an interesting exercise to verify that
$${\bf v} (ka\cdot z_0)\asymp a^{-2\rho} \qquad (k \in K, a\in A^-).$$
See \cite[Proposition 4.3]{KKSS} for a more general result in the context of real spherical spaces. The radial function
$${\bf w}:Z\to \R_{\geq 1},\quad ka\cdot z_{0}\mapsto 1+ \|\log (a)\|$$
will also be of use.

Finally for $\pi\in \hat G_s$, $v\in \cH_\pi$ and $\xi\in \cH_{\pi}'$, we define the matrix coefficient
$$m_{v, \xi}:G\to\C,\quad g\mapsto \xi\big(\pi(g^{-1})v\big).$$
The following is a special case of Bernstein's theorem, see the analytic necessary condition in \cite[Section 0.2]{B}.

\begin{prop}\label{Prop tempered estimate}
There exists a number $r \in \R$ such that for almost all $\pi \in \supp\mu$  and $\eta\in (\cH_{\pi}')^{K}$ with $\|\eta\|=1$ we have
\begin{equation}  \label{tempered}
q_{\pi,r}(v):=\sup_{z\in Z} |m_{v, \eta}(z)| \sqrt{{\bf v}(z)} {\bf w}(z)^r <\infty \qquad
( v \in \cH_\pi^\infty).\end{equation}
\end{prop}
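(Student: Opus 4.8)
The plan is to obtain \eqref{tempered} from the abstract Plancherel decomposition \eqref{eq abstract Plancherel thm} together with the classical majoration principle for matrix coefficients; this route in fact gives the bound for \emph{every} $\pi\in\supp\mu$, with $r$ and the estimating seminorm independent of $\pi$. First I would record the weak-containment input: since \eqref{eq abstract Plancherel thm} realizes $L^{2}(Z)$ as a direct integral over $\supp\mu$, every $\pi\in\supp\mu$ is weakly contained in the unitary representation $L^{2}(Z)=\Ind_{K}^{G}\1$, hence a fortiori in $L^{2}(G)$, so that $\pi$ is tempered in the sense of Harish-Chandra.

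The second and main step is a pointwise majoration of the matrix coefficients $m_{v,\eta}$ by Harish-Chandra's spherical function $\Xi=\phi_{0}$. For tempered $\pi$ one has the Cowling--Haagerup--Howe basic inequality $|\la\pi(g)v,w\ra|\le(\dim\la Kv\ra)^{1/2}(\dim\la Kw\ra)^{1/2}\,\Xi(g)$ for $K$-finite unit vectors $v,w\in\cH_{\pi}$; for $w$ the normalized $K$-fixed vector this is $|\phi_{\pi}|\le\Xi$, and since then $\dim\la Kw\ra=1$ one gets $|m_{v,\eta}(g)|\le(\dim\la Kv\ra)^{1/2}\Xi(g)\|v\|$ for $K$-finite $v$. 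Decomposing an arbitrary $v\in\cH_{\pi}^{\infty}$ into its $K$-isotypic components and using the admissibility of $\pi$ together with an elliptic (Weyl-law) estimate on the compact group $K$ to sum the resulting series, I would deduce $|m_{v,\eta}(z)|\le S(v)\,\Xi(z)$ on $Z$, where $\|\eta\|=1$ and $S$ is a Sobolev seminorm on $\cH_{\pi}^{\infty}$ of order depending only on $\dim K$ and on the (uniformly bounded) $K$-multiplicities of spherical principal series.

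Finally I would feed in Harish-Chandra's classical estimates for $\phi_{0}$, which together with the volume asymptotics ${\bf v}(ka\cdot z_{0})\asymp a^{-2\rho}$ recalled before the proposition give $\Xi(z)\sqrt{{\bf v}(z)}\le C\,{\bf w}(z)^{d}$ on all of $Z$ for suitable $C>0$ and $d\in\N$. Taking any $r\le-d$ and combining with the previous step yields $q_{\pi,r}(v)=\sup_{z\in Z}|m_{v,\eta}(z)|\sqrt{{\bf v}(z)}\,{\bf w}(z)^{r}\le C\,S(v)<\infty$ for all $v\in\cH_{\pi}^{\infty}$, uniformly in $\pi\in\supp\mu$.

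I expect the main obstacle to be the second step: passing from abstract weak containment in $L^{2}(Z)$ to a genuinely pointwise domination of matrix coefficients. For the $K$-biinvariant coefficient $\phi_{\pi}$ this is the classical Herz/Cowling--Haagerup--Howe majoration, but promoting it to matrix coefficients of arbitrary smooth vectors against the spherical functional, with an estimating seminorm uniform over $\supp\mu$, is where the real work lies; the remaining ingredients---the abstract Plancherel theorem \eqref{eq abstract Plancherel thm} and Harish-Chandra's estimates for $\phi_{0}$---may simply be quoted. An alternative, following Bernstein's original argument in \cite{B}, avoids the majoration principle: one disintegrates the point-evaluation functionals of $L^{2}(Z)$ through the direct integral and controls their norms by a weighted Sobolev lemma on $Z$; that approach is more self-contained but technically heavier and produces the estimate only for $\mu$-almost every $\pi$.
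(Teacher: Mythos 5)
Your proposal is sound, but it proves the proposition by a route the paper does not take: in the text this statement is not proved at all, it is quoted as a special case of Bernstein's theorem (the ``analytic necessary condition'' of \cite[Section 0.2]{B}), which is exactly the alternative you sketch in your last sentences --- disintegrating point evaluations of $L^{2}(Z)$ through the direct integral and controlling them by a weighted Sobolev lemma, which yields the estimate for $\mu$-almost every $\pi$ and, crucially, works for an arbitrary homogeneous space $Z=G/H$; that generality is the point of citing Bernstein in a paper whose theme is real spherical spaces. Your argument instead exploits the special feature $H=K$ compact: every $\pi\in\supp\mu$ is weakly contained in $L^{2}(Z)=\Ind_{K}^{G}\1\prec L^{2}(G)$, so the Cowling--Haagerup--Howe majorization gives $|m_{v,\eta}|\le(\dim\la Kv\ra)^{1/2}\,\Xi\,\|v\|$ for $K$-finite $v$, and the $K$-isotypic summation plus Harish-Chandra's estimate $\Xi(a)\sqrt{{\bf v}(a\cdot z_{0})}\le C(1+\|\log a\|)^{d}$ on $A^{-}$ finish the proof. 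What your route buys is a statement for \emph{every} $\pi$ in the support, with an explicit $K$-Sobolev seminorm uniform in $\pi$ (the CHH constant is absolute, and the multiplicity bound $\dim\Hom_{K}(\delta,\pi)\le\dim\delta$ from Harish-Chandra admissibility makes the Weyl-law summation depend only on $K$); what it costs is that it does not generalize beyond the Riemannian case, since it hinges on $L^{2}(G/H)\prec L^{2}(G)$ and on the spherical function $\Xi$, neither of which is available for a general real spherical $H$. Two small points you should still nail down: the fact that every point of $\supp\mu$ is weakly contained in $L^{2}(Z)$ (a standard support statement for direct-integral decompositions of type I groups, e.g.\ Dixmier), and the passage from $K$-finite to smooth vectors, where the series $\sum_{\delta}(\dim\delta)\,\|v_{\delta}\|$ must be dominated by a Laplace--Sobolev norm on $K$ of order depending only on $\dim K$; both are routine but are the places where the uniformity claim actually gets used.
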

We call $\pi\in \hat G_s$ {\it tempered} provided that the estimate \eqref{tempered} holds true for some $r\in \R$.

\begin{theorem}\label{Thm classification of tempered reps}
An irreducible spherical representation $\pi$ of $G$ is tempered if and only if $[\pi]=[\pi_\lambda]$ for some $\lambda\in i\af^*$.
\end{theorem}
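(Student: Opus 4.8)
For the implication from right to left, suppose $[\pi]=[\pi_\lambda]$ with $\lambda\in i\af^*$. Then $\pi_\lambda$ is unitary with spherical function $\phi_\lambda$, and Harish-Chandra's formula (\ref{all phi lambda}) gives at once $|\phi_\lambda(g)|\le\phi_0(g)$, where $\phi_0$ is the elementary spherical function, because $|\acomp(g^{-1}k)^{\rho-\lambda}|=\acomp(g^{-1}k)^{\rho}$ when $\Re\lambda=0$. On the other hand Harish-Chandra's estimate for $\phi_0$ is precisely the assertion that $\phi_0(ka\cdot z_0)\sqrt{\mathbf v(ka\cdot z_0)}\le C\,\mathbf w(ka\cdot z_0)^{N}$ for $k\in K$ and $a\in A^-$, as one reads off from $\mathbf v(ka\cdot z_0)\asymp a^{-2\rho}$ and the $c$-function expansion. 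Combining this with the Cartan decomposition $G=KA^-K$ and $K$-biinvariance yields (\ref{tempered}) for the pair $(\zeta_\pi,\eta)$ with any $r\le-N$. For a general smooth vector $v\in\cH_\pi^\infty$ I would use that $\pi_\lambda$, being a unitary principal series, is tempered as a representation of $G$, so its matrix coefficients lie in the Harish-Chandra Schwartz space; in particular $|m_{v,\eta}(g)|\le q(v)\,\phi_0(g)$ for a continuous seminorm $q$ on $\cH_\pi^\infty$, and the bound above again gives $q_{\pi,r}(v)<\infty$. Hence $\pi$ is tempered.

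For the converse, let $\pi$ be irreducible, spherical and tempered. By Corollary \ref{cor Cass} there is an injective $(\gf,K)$-morphism $V_\pi\hookrightarrow V_\lambda$ for some $\lambda\in\af_\C^*$, and the first step is to identify the spherical function of $\pi$ as $\phi_\lambda$. Indeed $V_\pi^K$ is one-dimensional (Theorem \ref{thm Gelfand}) and sits inside $V_\lambda^K=\C\eta_\lambda$, so the spherical vector of $\pi$ goes to a nonzero multiple of $\eta_\lambda$; dually, the pairing (\ref{pairing}) identifies $V_\lambda'$ with $V_{-\lambda}$, and since passing to $K$-invariants is exact on $(\gf,K)$-modules the induced surjection $V_{-\lambda}\twoheadrightarrow V_\pi'$ sends $\eta_{-\lambda}$ to a nonzero multiple of the spherical functional of $\pi$ (nonzero because $\pi'$ is spherical). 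Comparing matrix coefficients and normalising at $\1$ gives $\phi_\pi=\phi_\lambda$. By the analytic continuation in $\lambda$ of the $W$-invariance (\ref{W inv}) one has $\phi_\lambda=\phi_{w\lambda}$ for all $w\in W$, so we may assume that $\Re\lambda$ lies in the closed positive Weyl chamber. Applying the temperedness estimate (\ref{tempered}) to $(\zeta_\pi,\eta)$, restricted to $z=a\cdot z_0$ with $a\in A^-$ and using $\mathbf v(a\cdot z_0)\asymp a^{-2\rho}$, we obtain some $r\in\R$ with
\[ |\phi_\lambda(\exp X)|\le C(1+\|X\|)^{-r}\,e^{\rho(X)}\qquad(X\in\af^-); \]
that is, $\phi_\lambda$ decays at least like $e^{\rho(X)}$, up to a polynomial factor, as $X\to\infty$ in $\af^-$.

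The core of the proof is that this is incompatible with $\Re\lambda\neq0$. Here I would invoke Theorem \ref{Thm Principal asymptotics} for the matrix coefficient $\phi_\lambda(g)=(\pi_\lambda(g)\eta_\lambda,\eta_{-\lambda})$: its principal asymptotics along $A^-$ is governed by the standard intertwining operator attached to the longest element $w_0\in W$, which acts on the spherical vector $\eta_\lambda$ through the Harish-Chandra $c$-function, so that $\phi_\lambda(\exp X)=c(\lambda)\,e^{(w_0\lambda+\rho)(X)}(1+o(1))$ as $X\to\infty$ deep inside $\af^{--}$ (with an extra polynomial factor in the degenerate case $c(\lambda)=\infty$). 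Crucially $c(\lambda)\neq0$ whenever $\Re\lambda$ is dominant: in the Gindikin--Karpelevich product the numerator $\Gamma$-factors are never zero, while the denominator $\Gamma$-factors have arguments of strictly positive real part and hence no poles. Cancelling the common factor $e^{\rho(X)}$ in the two asymptotic bounds gives $e^{(w_0\Re\lambda)(X)}\le C(1+\|X\|)^{-r}$ as $X\to\infty$ in $\af^{--}$; since $X\mapsto(w_0\Re\lambda)(X)$ is linear and $\af^{--}$ is open and nonempty, this forces $(w_0\Re\lambda)(X)\le0$ for all $X\in\af^-$. But $w_0\Re\lambda$ is antidominant, being $w_0$ applied to the dominant $\Re\lambda$, so $(w_0\Re\lambda)(X)\ge0$ on $\af^-$ as well; hence $w_0\Re\lambda$ vanishes on $\af^-$, i.e. $\Re\lambda=0$ and $\lambda\in i\af^*$. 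By Kostant's Proposition \ref{Prop irreducibility} the module $V_\lambda$ is then irreducible, so the embedding $V_\pi\hookrightarrow V_\lambda$ is an isomorphism; as $\pi$ and $\pi_\lambda$ are irreducible and unitary with isomorphic Harish-Chandra modules, $[\pi]=[\pi_\lambda]$.

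The main obstacle is the asymptotic step: one needs the leading term of $\phi_\lambda$ along $A^-$ together with the \emph{nonvanishing} of its coefficient, uniformly down to the walls of the Weyl chamber. This is exactly what Theorem \ref{Thm Principal asymptotics} supplies — it exhibits the principal term through a standard intertwining operator — and the remaining input is the classical computation that this operator acts on the spherical vector by the $c$-function, together with the observation that $c(\lambda)\neq0$ on the closed positive chamber. It is essential here to have normalised $\Re\lambda$ to be dominant rather than antidominant, since the rival coefficient $c(w_0\lambda)$ does vanish for certain $\lambda$ and would derail the argument. Everything else — the reduction to $A^-$ via the Cartan decomposition, the elementary convexity step, and the final appeal to Kostant — is routine.
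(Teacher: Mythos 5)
Your argument for the converse has a genuine gap at the step where you invoke Theorem \ref{Thm Principal asymptotics} to obtain the leading asymptotics of $\phi_\lambda$. That theorem controls matrix coefficients $m_{v,\eta_\lambda}$ only for $v\in\cH_{-\lambda}^\infty$ with $\supp(v)\subset\omega_0 N\overline P$, i.e.\ vectors compactly supported in the open Bruhat cell. The spherical vector $\eta_{-\lambda}$, being the constant function on $K/M$, is supported on all of $G$, so the hypothesis fails and the theorem gives you nothing about $\phi_\lambda=m_{\eta_{-\lambda},\eta_\lambda}$. The expansion $\phi_\lambda(\exp X)\sim c(\lambda)\,e^{(w_0\lambda+\rho)(X)}$ is of course classical (Harish-Chandra series), but it is exactly the heavier asymptotic machinery the paper is structured to avoid; and your treatment of the wall cases, where $c(\lambda)$ has a pole or several $w\lambda$ share the same real part and the expansion acquires polynomial factors with possible cancellation, is hand-waved at precisely the points where it matters.

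The paper's proof avoids this by exploiting the full strength of \eqref{tempered}: temperedness bounds $m_{u,\eta}$ for \emph{every} smooth $u\in\cH_\pi^\infty$, not just $\phi_\pi$. Pushing a compactly supported $v\in\cH_{-\lambda}^\infty$ (with $\supp(v)\subset\omega_0 N\overline P$) down the Casselman quotient $\cH_{-\lambda}\twoheadrightarrow\cH_\pi$ produces a matrix coefficient of $\pi$ whose asymptotics Corollary \ref{Cor asymptotic behavior of matrix coefficients} \emph{does} control; comparison with the temperedness bound yields $\Re\lambda\in\sum_{\alpha\in\Sigma^+}\R_{\leq 0}\alpha$. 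One cannot then simply normalise $\Re\lambda$ to be dominant, since the Casselman embedding is to a specific $V_\lambda$ and $\cH_\lambda$, $\cH_{w\lambda}$ need not be isomorphic for nonunitary $\lambda$; instead the paper applies a carefully chosen standard intertwiner $J_w(\lambda)$ (convergent for the chosen $w$ by \eqref{eq lambda condition} and acting on $\eta_\lambda$ by the nonvanishing $\cfunc_w(\lambda)$) to manufacture a second family of matrix coefficients of $\pi$ whose asymptotics force $\Re\lambda\in w^{-1}\big(\sum_{\alpha\in\Sigma^+}\R_{\leq 0}\alpha\big)$; the choice of $w$ makes the two cones meet only at $0$. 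This is structurally quite different from your chamber-normalisation argument. A minor point on the easy direction: you appeal to the temperedness of unitary principal series as representations of $G$ (matrix coefficients in the Harish-Chandra Schwartz space), a heavier external input; the paper's one-line estimate $|m_{v,\eta_\lambda}(gK)|\le\sup_{k\in K}|v(k)|\,\phi_0(g^{-1})$ already handles all smooth $v$ directly.
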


\begin{proof}
We provide here the proof for the temperedness of all representations $\pi_{\lambda}$ with $\lambda\in i\fa^{*}$. The proof for the converse implication will be given in section \ref{Subsection Temperedness revisited}.

Recall the Iwasawa-projections $\kcomp:G\to K$ and $\acomp:G\to A$ along the Iwasawa decomposition $G=KA\overline{N}$.
Let $\lambda\in i\fa^{*}$. For every $v\in \cH_{-\lambda}^{\infty}$ we have
\begin{align*}
|m_{v,\eta_{\lambda}}(gK)|
&=\big|\int_{K}v(gk)\,dk\big|
=\big|\int_{K}\acomp(gk)^{\rho+\lambda}v\big(\kcomp(gk)\big)\,dk\big|\\
&\leq\sup_{k\in K}|v(k)|\, \int_{K}|\acomp(gl)^{\rho+\lambda}|\,dl
=\sup_{k\in K}|v(k)|\, \phi_{0}(g^{-1}).
\end{align*}
It thus suffices to show that
$$
\phi_{0}(z)
\leq C {\bf v}(z)^{-\frac{1}{2}} {\bf w}(z)^{-r}
\qquad (z\in Z)
$$
for some $C>0$ and $r\in \R$. The latter bound follows from Harish-Chandra's estimate of the spherical function $\phi_{0}$, see \cite[Theorem 4.6.4]{GangolliVaradarajan}.
\end{proof}

\begin{cor} \label{cor support}
The map
$$
\iota:i\af^*/W\to\hat{G}_{s},\quad\lambda\mapsto [\pi_{\lambda}]
$$
is a continuous injection onto the subset $\hat{G}_{s,\mathrm{temp}}\subseteq\hat G_s$ of irreducible tempered spherical representations.
\end{cor}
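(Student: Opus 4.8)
The plan is to check the four defining properties of $\iota$ in turn: that it is well defined, injective, surjective onto $\hat G_{s,\mathrm{temp}}$, and continuous. The first three are formal consequences of results already at our disposal, so the real content is the continuity statement.

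First I would record that for $\lambda\in i\af^*$ the module $V_\lambda$ is irreducible by Proposition~\ref{Prop irreducibility}; since $\lambda\in i\af^*$ the associated representation $\pi_\lambda$ on $\cH_\lambda$ is unitary, and it is spherical because $\eta_\lambda\in\cH_\lambda^K$ is non-zero. Hence $\lambda\mapsto[\pi_\lambda]$ is a well-defined map $i\af^*\to\hat G_s$. By \eqref{W inv} two parameters yield the same class precisely when they lie in a common $W$-orbit, so this map descends to a well-defined \emph{injection} $\iota:i\af^*/W\hookrightarrow\hat G_s$. For the range: by definition $\hat G_{s,\mathrm{temp}}$ is the set of irreducible tempered spherical representations, and by Theorem~\ref{Thm classification of tempered reps} an irreducible spherical $\pi$ is tempered if and only if $[\pi]=[\pi_\lambda]$ for some $\lambda\in i\af^*$, i.e. if and only if $[\pi]$ lies in the image of $\iota$. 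Thus $\iota$ is a bijection of $i\af^*/W$ onto $\hat G_{s,\mathrm{temp}}$.

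It remains to prove continuity, and by the universal property of the quotient topology it is enough to show that $i\af^*\to\hat G_s$, $\lambda\mapsto[\pi_\lambda]$, is continuous. Here I would realize all the $\pi_\lambda$ on the single Hilbert space $\cH=L^2(K/M)$ in the compact picture, in which
$$
\bigl(\pi_\lambda(g)f\bigr)(k)=\acomp(g^{-1}k)^{\rho-\lambda}\,f\bigl(\kcomp(g^{-1}k)\bigr)\qquad(f\in\cH,\ k\in K).
$$
Since the scalar $\acomp(g^{-1}k)^{\rho-\lambda}$ is jointly continuous in $(\lambda,g,k)$ and locally bounded, and $\kcomp(g^{-1}k)$ is continuous, the map $(\lambda,g)\mapsto\pi_\lambda(g)$ is jointly continuous for the strong operator topology; thus $(\pi_\lambda)_{\lambda\in i\af^*}$ is a continuous field of irreducible unitary representations on $\cH$, which induces a continuous map $i\af^*\to\hat G$. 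Concretely, if $\lambda_n\to\lambda$ in $i\af^*$, then by Harish-Chandra's integral formula \eqref{all phi lambda} and dominated convergence the diagonal matrix coefficients $\phi_{\lambda_n}=\langle\pi_{\lambda_n}(\dotvar)\eta_{\lambda_n},\eta_{\lambda_n}\rangle$ converge to $\phi_\lambda$ uniformly on compact subsets of $G$; since $\pi_\lambda$ is irreducible, $\eta_\lambda$ is cyclic, so $\phi_\lambda$ is the positive-definite function of a cyclic unit vector, and Fell's weak-containment description of the topology on $\hat G$ then yields $[\pi_{\lambda_n}]\to[\pi_\lambda]$ (apply the criterion to an arbitrary subnet).

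The main obstacle is exactly this last step: the well-definedness, injectivity and surjectivity are immediate from \eqref{W inv} and Theorem~\ref{Thm classification of tempered reps}, but the continuity forces one either to verify carefully the joint strong continuity of the compact-picture operators in $(\lambda,g)$ and appeal to the fact that a continuous field of irreducibles induces a continuous map into the unitary dual, or to invoke Fell's characterization of the Fell topology through uniform-on-compacta convergence of associated positive-definite functions. I would also flag that the corollary relies on \eqref{W inv}, whose proof is only given later in Remark~\ref{rmk Symmetry spherical functions}.
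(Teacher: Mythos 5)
Your proof is correct. The paper gives no proof of this corollary at all — it is stated immediately after Theorem~\ref{Thm classification of tempered reps} and the surrounding discussion of~\eqref{W inv} and Proposition~\ref{Prop irreducibility}, leaving the reader to assemble exactly the ingredients you do: irreducibility gives well-definedness, \eqref{W inv} gives injectivity, Theorem~\ref{Thm classification of tempered reps} gives that the image is precisely $\hat G_{s,\mathrm{temp}}$, and the continuity is left as a standard fact. You supply what the paper omits, namely a careful continuity argument via the compact picture on $L^2(K/M)$ together with Fell's characterization of convergence in $\hat G$ through uniform-on-compacta convergence of the associated positive-definite (here: spherical) functions; both of your two suggested routes to continuity are sound, and your observation that the diagonal matrix coefficient through the cyclic vector $\eta_\lambda$ suffices because $\pi_\lambda$ is irreducible is exactly the right reduction. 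Your flag about the forward reference to Remark~\ref{rmk Symmetry spherical functions} for \eqref{W inv} matches the paper's own acknowledgment that the proof of that fact is only sketched later.
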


\section{Intertwiners and asymptotics}\label{Section Intertwiners and Asymptotics}
\subsection{Heuristic from finite dimensional representations}
The aim of this section is to explain the ideas behind the asymptotics of $K$-fixed generalized vectors.

Let $V$ be a finite dimensional irreducible representation and $\tilde V$ its dual. We assume that $\tilde V^K\neq \{0\}$ and let $0\neq \eta \in \tilde{V}^K$. Let $\lambda\in \af^*$ be the lowest weight of $\tilde V$ and
\begin{equation} \label{eta weights}
\eta=\sum_{\mu \in \lambda+\N_0[\Sigma^+]}\eta^\mu
\end{equation}
the expansion of $\eta$ in $\af$-weight vectors.

We claim that $\eta^{\lambda}\neq 0$.
Let $v_{-\lambda} \in V^{N}\setminus\{0\}$ be a highest weight vector with highest weight $-\lambda$.
First note that $\eta^\lambda(v_{-\lambda})=\eta(v_{-\lambda})$. It thus suffices to show that $\eta(v_{-\lambda})\neq 0$.
In fact $V = \U(\gf)v_{-\lambda}$ as $V$ is irreducible and $\U(\gf)=\U(\kf)\U(\af+\nf)$ by the theorem of Poincar{\'e}, Birkhoff and Witt. Therefore,  $V = \U(\kf)v_{-\lambda}$. On the other hand $\U(\kf)\eta=\C\eta$. If $\eta(v_{-\lambda})$ would be $0$, then $\eta(V)=\eta(\U(\gf)v_{-\lambda})=\{0\}$, hence $\eta=0$, which is a contradiction.

In fact the argument just given shows more: since $\eta(v_{-\lambda})\neq 0$ for any $\eta\in \tilde{V}^{K}\setminus\{0\}$ and any $v_{-\lambda}\in V^{N}\setminus\{0\}$, it follows that $\tilde{V}^{K}$ and $V^{N}$ are both $1$-dimensional. In particular we obtain an alternative proof for Gelfand's Theorem \ref{thm Gelfand} for irreducible finite dimensional representations.
Moreover, for every $m\in M$ and $v_{-\lambda}\in V^{N}$ we have
$$
\eta^{\lambda}(m\cdot v_{-\lambda})
=\eta(m\cdot v_{-\lambda})
=(m^{-1}\cdot \eta)(v_{-\lambda})
=\eta(v_{-\lambda})
=\eta^{\lambda}(v_{-\lambda}),
$$
and hence $V^{N}=V^{MN}$.
By dualizing we find that $\tilde{V}^{\overline{N}}=\tilde{V}^{M\overline{N}}= \C\eta^{\lambda}$ is $1$-dimensional.

It follows from (\ref{eta weights}) that we can obtain $\eta^{\lambda}$ from $\eta$ as a limit
\begin{equation} \label{limit eta}
\eta^{\lambda}
=\lim_{t\to \infty} e^{-t\lambda(X)} (\exp(tX)\cdot \eta)
\end{equation}
where $X$ is any element in $\af^{--}$.
We wish to interpret \eqref{limit eta} in terms of matrix coefficients. Let
$$m_{v,\eta}(g):=\eta(g^{-1}v) \quad \hbox{and} \quad m_{v, \eta^{\lambda}}(g) =\eta^{\lambda}(g^{-1}v)\qquad (g\in G).$$
Now $m_{v,\eta}$ descends to a function on $Z=G/K$ whereas $m_{v,\eta^{\lambda}}$ descends to a function on $G/M\overline{N}$. The natural comparison of $m_{v,\eta}$ with $m_{v,\eta^{\lambda}}$ is thus on $A$, where $m_{v,\eta^{\lambda}}(a)= a^\lambda \eta^{\lambda}(v)$ behaves like a character.
For every $X\in\fa$ and $t\in \R$
$$
m_{v,\eta}\big(\exp(tX)\big)=m_{v,\eta_{{\lambda}}}\big(\exp(tX)\big)+R_{t}(v)
$$
where
$$
R_{t}(v)
=\sum_{\mu\in \big(\lambda+\N_{0}[\Sigma^{+}]\big)\setminus\{\lambda\}}m_{v,\eta^{\mu}}\big(\exp(tX)\big).
$$
For $X\in \fa^{--}$ the remainder $R_{t}(v)$ is bounded by
$$
|R_{t}(v)|\leq c e^{t\lambda(X)+t\epsilon\rho(X)}\|v\|\|\eta\|\qquad \big(v\in V, t\geq0\big),
$$
where
$$
c=\#\big\{\mu\in\lambda+\N_{0}[\Sigma^{+}]\mid \eta^{\mu}\neq 0\big\}
$$
and
$$
\epsilon
=\min\Big\{\frac{\mu(X)-\lambda(X)}{\rho(X)}\mid \mu\in  \big(\lambda+\N_{0}[\Sigma^{+}]\big)\setminus\{\lambda\}\Big\}
\geq \min\Big\{\frac{\alpha(X)}{\rho(X)}\mid \alpha\in\Sigma^{+}\Big\}>0.
$$
A more quantitative version of \eqref{limit eta} is then
\begin{equation}\label{remainder fd}
|m_{v,\eta}\big(\exp(tX)\big)- m_{v,\eta^{\lambda}} \big(\exp(tX)\big)| \leq c e^{t\lambda(X) +t\e \rho(X)} \|v\| \|\eta\|\qquad \big(v \in V,t\geq 0\big)
\end{equation}
for an $\e>0$ which depends only on the distance of $\frac{X}{\|X\|}$ to the walls of $\af^-$ and not on $V$.

Our aim is to develop a theory for asymptotics for matrix coefficients of spherical representations. We do this in two ways: principal asymptotics (Theorem \ref{Thm Principal asymptotics}) and the constant term approximation (Theorem \ref{thm const}). In each case we determine an analogue of (\ref{remainder fd}).

\subsection{Intertwining operators}
In this section we introduce the theory of intertwining operators on principal series representations, which was developed by Knapp and Stein in \cite{KnappStein}. The intertwiners will be used in the next section to describe the asymptotics of $K$-fixed generalized vectors.

We first need to fix normalizations of Haar measures on certain subgroups of conjugates of $N$.
Let $Q$ be any minimal parabolic subgroup containing $A$. Then $Q=MAN_{Q}$, where $N_{Q}$ is the nilpotent radical of $Q$. The map
$$
(N_{Q}\cap N)\times (N_{Q}\cap\overline{N})\to N_{Q},\quad (n_{1},n_{2})\mapsto n_{1}n_{2}
$$
is a diffeomorphism.
The Haar measure on $N_{Q}$ is left $(N_{Q}\cap N)$- and right $(N_{Q}\cap \overline{N})$-invariant, and hence we may and will normalize the Haar measures on $N_{Q}\cap N$ and $N_{Q}\cap \overline{N}$ so that
\begin{equation}\label{eq Decomp of N integral}
\int_{N_{Q}}\phi(n)\,dn
=\int_{N_{Q}\cap N}\int_{N_{Q}\cap \overline{N}}\phi(n_{1}n_{2})\,dn_{2}\,dn_{1}\qquad \big(\phi\in C_{c}(N_{Q})\big).
\end{equation}
We use (\ref{eq Decomp of N integral}) to fix the normalization of the invariant measure on the quotient $N_{Q}/(N_{Q}\cap\overline{N})$ by demanding that
\begin{equation}\label{eq normalization of measure on quotient}
\int_{N_{Q}/(N_{Q}\cap\overline{N})}\psi(\overline{n})\,d\overline{n}
=\int_{N_{Q}\cap N}\psi(n)\,dn
\qquad \Big(\psi\in C_{c}\big(N_{Q}/(N_{Q}\cap\overline{N})\big)\Big).
\end{equation}

Let $w\in W$ and let $\omega\in N_{K}(\fa)$ be a representative of $w$.
We define
$$
{}^{w}\!\overline{N}:=\omega^{-1}\overline{N}\omega.
$$
By applying the normalization from (\ref{eq normalization of measure on quotient}) to $Q=\omega^{-1}\overline{P}\omega$, we obtain that the invariant measure on  the quotient ${}^{w}\!\overline{N}/({}^{w}\!\overline{N}\cap\overline{N})$ is given by
$$
\int_{{}^{w}\!\overline{N}/({}^{w}\!\overline{N}\cap\overline{N})}\psi(\overline{n})\,d\overline{n}
=\int_{{}^{w}\!\overline{N}\cap N}\psi(n)\,dn
\qquad \Big(\psi\in C_{c}\big({}^{w}\!\overline{N}/({}^{w}\!\overline{N}\cap\overline{N})\big)\Big).
$$

We define $\partial({}^{w}\!\overline{N}\,\overline{P}):=\mathrm{cl}({}^{w}\!\overline{N}\,\overline{P})\setminus {}^{w}\!\overline{N}\,\overline{P}\subseteq G/\overline{P}$.
Let $\lambda\in \fa_{\C}^{*}$. For $v\in \cH_{\lambda}^{\infty}$ we define
$$
\Omega_{w,v}:=\big\{g\in G\mid\supp(v)\cap g\omega\partial({}^{w}\!\overline{N}\,\overline{P})=\emptyset\big\}.
$$
Note that $\partial({}^{w}\!\overline{N}\,\overline{P})$ is closed as it is a finite union of  closures of ${}^{w}\!\overline{N}$ orbits in $G/\overline{P}$ of dimension strictly smaller than the dimension of ${}^{w}\!\overline{N}\,\overline{P}$. Therefore, $\Omega_{w,v}$ is an open right $\overline{P}$-invariant subset of $G$. We may now define
\begin{equation}\label{eq Def J_w(lambda)}
J_{w}(\lambda)v:\Omega_{w,v}\to\C,\quad g\mapsto \int_{{}^{w}\!\overline{N}\cap N}v(g\omega n)\,dn
= \int_{{}^{w}\!\overline{N}/({}^{w}\!\overline{N}\cap \overline{N})}v(g\omega\overline{n})\,d\overline{n}.
\end{equation}
The integrals are absolutely convergent as ${}^{w}\!\overline{N}\cap N\ni n\mapsto v(g\omega n)$ is by definition compactly supported for all $g\in \Omega_{w,v}$.

Note that $J_{w}(\lambda)v$ is right $\overline{N}$-invariant. Moreover, for all $g\in \Omega_{w,v}$, $m\in M$, $a\in A$ and $\overline{n}\in \overline{N}$
\begin{align*}
J_{w}(\lambda)v(gma\overline{n})
&=J_{w}(\lambda)v(gma)
= \int_{{}^{w}\!\overline{N}\cap N}v(gma\omega n)\,dn\\
&=a^{-w\lambda+w\rho}\int_{{}^{w}\!\overline{N}\cap N}v\big(g\omega(\omega^{-1} ma\omega)n(\omega^{-1} ma\omega)^{-1}\big)\,dn\\
&=a^{-w\lambda+w\rho}a^{w(w^{-1}\rho-\rho)}\int_{{}^{w}\!\overline{N}\cap N}v(g \omega n)\,dn
=a^{-w\lambda+\rho}J_{w}(\lambda)v(g).
\end{align*}
If $\lambda\in \fa_{\C}^{*}$ satisfies
\begin{equation}\label{eq lambda condition}
\Re\langle \lambda,\alpha\rangle>0\qquad \big(\alpha\in -\Sigma^{+}\cap w^{-1}\Sigma^{+}\big)
\end{equation}
then the integrals
$$
 \int_{{}^{w}\!\overline{N}\cap N}v(g\omega n)\,dn
$$
are absolutely convergent for every $v\in \cH_{\lambda}^{\infty}$ and $g\in G$. See \cite[Proposition 7.8]{Knapp}.
Therefore, for $\lambda\in \fa_{\C}^{*}$ satisfying (\ref{eq lambda condition}) the functions $J_{w}(\lambda)v$  with $v\in \cH_{\lambda}^{\infty}$ extend to all of $G$, are given by convergent integrals and belong to $\cH_{w\lambda}^{\infty}$.
We denote these extensions again by $J_{w}(\lambda)v$. Note that $J_{w}(\lambda)$ is equivariant, and hence intertwines the representations $\cH_{\lambda}^{\infty}$ and $\cH_{w\lambda}^{\infty}$.

When viewed as a map $C^{\infty}(K/M)\to C^{\infty}(K/M)$ the dependence of $J_{w}(\lambda)$ on $\lambda$ is holomorphic for $\lambda\in\fa_{\C}^{*}$ satisfying (\ref{eq lambda condition}). The family of operators $\lambda\mapsto J_{w}(\lambda)\in \End\big(C^{\infty}(K/M)\big)$ extends to a meromorphic family on $\fa_{\C}^{*}$.
See \cite[Corollary 7.13]{Knapp}.
Furthermore, $J_{w}(\lambda)v(g)$ is given by (\ref{eq Def J_w(lambda)}) for all $v\in \cH_{\lambda}^{\infty}$ and $g\in\Omega_{w,v}$.  The operators $J_{w}(\lambda)$ are equivariant and are called standard intertwining operators.

Since $J_{w}(\lambda)$ is an equivariant operator, there exists a unique meromorphic function $\cfunc_{w}:\fa_{\C}^{*}\to \C$ so that
$$
J_{w}(\lambda)\eta_{\lambda}
=\cfunc_{w}(\lambda)\eta_{w\lambda}
$$
as a meromorphic identity.
The function $\cfunc_{w}$ is for $\lambda\in \fa_{\C}^{*}$ satisfying (\ref{eq lambda condition}) given by the convergent integral
$$
\cfunc_{w}(\lambda)
=\big(J_{w}(\lambda)\eta_{\lambda}\big)(\1)
=\int_{{}^{w}\!\overline{N}\cap N}\acomp(n)^{\rho-\lambda}\,dn.
$$
The Gindikin-Karpelevich formula in \cite{GindikinKarpelevich} (see also the article \cite{BhanuMurthy} by Bhanu Murthy) exhibits the $\cfunc$-functions as a product of $\cfunc$-function for groups of real rank $1$. The latter are explicitly given in terms of quotients of $\Gamma$-functions.
See \cite[Theorem 4.7.5]{GangolliVaradarajan}.
It follows from the  Gindikin-Karpelevich formula that $\cfunc_{w}(\lambda)\neq 0$ for any $\lambda\in \fa_{\C}^{*}$.

The follow lemma determines the dual operator to $J_{w}(\lambda)$.

\begin{lemma}\label{Lemma dual of intertwiner}
For all $w\in W$ and $\lambda\in \fa_{\C}^{*}$ so that $J_{w}$ does not have a pole at $\lambda$ we have
$$
\int_{K}\big(J_{w}(\lambda) u\big)(k) v(k)\,dk
=\int_{K} u(k)\big(J_{w^{-1}}(-w\lambda)v\big)(k)\,dk
\qquad (u\in \cH_{\lambda}, v\in \cH_{-w\lambda}).
$$
In particular, the dual operator of $J_{w}(\lambda)$ is equal to $J_{w^{-1}}(-w\lambda)$.
\end{lemma}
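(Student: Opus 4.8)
The plan is to prove the displayed integral identity for $\lambda$ in the region of convergence of the intertwining operators and then to extend it to all of $\fa_\C^*$ by meromorphy; the statement about the dual operator follows at once from the non-degeneracy of the pairing \eqref{pairing}. For fixed $u\in\cH_\lambda^\infty$ and $v\in\cH_{-w\lambda}^\infty$ both sides depend meromorphically on $\lambda$, since $\lambda\mapsto J_w(\lambda)$ and $\lambda\mapsto J_{w^{-1}}(-w\lambda)$ are meromorphic families of operators and \eqref{pairing} is continuous, so it suffices to argue on the nonempty open cone defined by \eqref{eq lambda condition} for $w$. On that cone one checks, via the substitution $\alpha\mapsto -w^{-1}\alpha$, that $\Re\langle-w\lambda,\alpha\rangle>0$ for all $\alpha\in-\Sigma^+\cap w\Sigma^+$ as well, so that $J_w(\lambda)u$ and $J_{w^{-1}}(-w\lambda)v$ are both given by the absolutely convergent integrals \eqref{eq Def J_w(lambda)} at every point of $G$. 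I also use throughout that, for the trivial $M$-type, the pairing \eqref{pairing} is simply integration over $K/M$.

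The first real step is to unfold both sides to integrals over $N$. Since $J_w(\lambda)u\in\cH_{w\lambda}^\infty$ and $v\in\cH_{-w\lambda}^\infty$, the product $(J_w(\lambda)u)\cdot v$ is continuous on $G$ and satisfies \eqref{eq properties Phi}; the same holds for $u\cdot(J_{w^{-1}}(-w\lambda)v)$ because $J_{w^{-1}}(-w\lambda)v\in\cH_{-\lambda}^\infty$. Hence Lemma \ref{Lemma Integral identity for Phi} applies to both products, and after inserting \eqref{eq Def J_w(lambda)} the assertion becomes the identity of iterated integrals
\[
\int_N\Big(\int_{{}^{w}\!\overline{N}\cap N}u(n\omega n')\,dn'\Big)v(n)\,dn
=\int_N u(n)\Big(\int_{{}^{w^{-1}}\!\overline{N}\cap N}v(n\omega^{-1}n'')\,dn''\Big)\,dn,
\]
where $\omega^{-1}$ is taken as a representative of $w^{-1}$.

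The heart of the matter, and the step I expect to cost the most work, is the change of variables proving this last identity. Write $\Sigma^+=R_w\sqcup R_w^c$ with $R_w=\{\gamma\in\Sigma^+:w\gamma<0\}$, so that ${}^{w}\!\overline{N}\cap N=\exp\big(\bigoplus_{\gamma\in R_w}\gf^\gamma\big)$; set $N^w:=\exp\big(\bigoplus_{\gamma\in R_w^c}\gf^\gamma\big)$, and recall the direct product decomposition $N=({}^{w}\!\overline{N}\cap N)\cdot N^w$, which under the normalizations \eqref{eq Decomp of N integral}--\eqref{eq normalization of measure on quotient with w} carries the product Haar measure. One has $\omega({}^{w}\!\overline{N}\cap N)\omega^{-1}=\overline{N}\cap\omega N\omega^{-1}\subseteq\overline{N}$, and the map $\gamma\mapsto-w\gamma$ is a bijection of $R_w$ onto $R_{w^{-1}}$. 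The plan is then: split the outer variable $n$ according to $N=({}^{w}\!\overline{N}\cap N)\cdot N^w$; push $\omega$ to the left past $n'$, replacing $n'\in{}^{w}\!\overline{N}\cap N$ by $\omega n'\omega^{-1}\in\overline{N}$ (a measure-preserving substitution by \eqref{eq normalization of measure on quotient with w}); use the right $\overline{N}$-invariance of the principal series vectors to absorb these $\overline{N}$-factors; and finally apply Fubini. The same manipulation applied to the right-hand side, with the roles of $u$ and $v$ interchanged and $\omega$ replaced by $\omega^{-1}$, brings both sides to one and the same integral over $({}^{w}\!\overline{N}\cap N)\times N^w$. The genuinely delicate points are to verify that every conjugation substitution has Jacobian $1$ for the chosen measure normalizations, and that the $A$-valued cocycle factors produced when the mixed $N\overline{N}$-products are re-expressed in Iwasawa coordinates cancel between the two sides.

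Once the identity of iterated integrals is established on the cone \eqref{eq lambda condition}, meromorphic continuation gives it on all of $\fa_\C^*$, which is the formula of the lemma; and since \eqref{pairing} is non-degenerate this says precisely that the dual operator of $J_w(\lambda)$ is $J_{w^{-1}}(-w\lambda)$. I note a shorter but less self-contained alternative: for $\lambda$ in general position $\dim\Hom_G(\cH_{-w\lambda},\cH_{-\lambda})=1$, so the dual operator equals $c(\lambda)J_{w^{-1}}(-w\lambda)$ for a scalar $c(\lambda)$; evaluating at $\eta_{-w\lambda}$ and pairing with $\eta_\lambda$ gives $c(\lambda)=\cfunc_w(\lambda)/\cfunc_{w^{-1}}(-w\lambda)$, and the Gindikin-Karpelevich product formula, together with the facts that $w$ is an isometry and that $\gamma\mapsto-w\gamma$ matches $R_w$ with $R_{w^{-1}}$ while preserving root multiplicities, yields $\cfunc_w(\lambda)=\cfunc_{w^{-1}}(-w\lambda)$, whence $c(\lambda)\equiv1$.
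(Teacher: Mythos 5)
Your main route has a genuine gap. After applying Lemma \ref{Lemma Integral identity for Phi} and pushing $\omega$ to the left (writing $\overline n'=\omega n'\omega^{-1}\in\overline N\cap\omega N\omega^{-1}$), the left side becomes $\int_N\int_{\overline N\cap\omega N\omega^{-1}}u(n\,\overline n'\,\omega)\,v(n)\,d\overline n'\,dn$. At this point $\overline n'$ sits \emph{between} $n$ and $\omega$ in the argument of $u$, so right $\overline N$-invariance of $u$ cannot absorb it. The most the transformation laws give is $v(n)=v(n\overline n')$, hence an integral of $\Psi(g):=u(g\omega)v(g)$ over the set $N\cdot(\overline N\cap\omega N\omega^{-1})$ equipped with the product of the two Haar measures. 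This set is not the direct product $({}^w\!\overline N\cap N)\times({}^w\!N\cap N)$ you name (the dimensions do not agree), it is not a group, and the plan does not specify which measure a "Fubini" over it should carry or how it compares with the analogous object on the right side. The "genuinely delicate points" you defer — Jacobians of the conjugations and cancellation of the $A$-valued cocycles — are not technicalities to be cleaned up later; they are the entire content of the lemma. The paper's proof supplies precisely the device your plan is missing: one observes that $\Psi$ transforms under $A\cdot(\overline N\cap\omega\overline N\omega^{-1})$ by the character $a\mapsto a^{\rho+w\rho}$, writes $\Psi(g)=\int_{\overline N\cap\omega\overline N\omega^{-1}}\int_A\phi(g\overline n a)\,da\,d\overline n$ with $\phi\in C_c^\infty(G)$, and then uses \eqref{eq Integral formula Iwasawa decomp}, the decompositions \eqref{eq Decomp of N integral} of $\overline N$, and the right invariance of Haar measure under $g\mapsto g\omega^{-1}$ to convert \emph{both} sides into the unambiguous quantity $\int_G\phi(g)\,dg$. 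Without that step, or an equivalent passage through the Haar measure on $G$, the change of variables cannot be made rigorous.

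Your alternative route — generic one-dimensionality of $\Hom_G(\cH_{-w\lambda}^\infty,\cH_{-\lambda}^\infty)$ plus the Gindikin--Karpelevich product to pin down the scalar — is a genuinely different argument from the paper's and is sound in outline, but it imports two nontrivial inputs the paper does not use and does not establish: the uniqueness of intertwiners, and the identity $\cfunc_w(\lambda)=\cfunc_{w^{-1}}(-w\lambda)$ derived from the rank-one factorization by matching $R_w$ with $R_{w^{-1}}$ via $\alpha\mapsto-w\alpha$. If you intend this as the actual proof, both points need to be carried out rather than asserted.
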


\begin{proof}
We recall that ${}^{w}\! \overline{N}=\omega^{-1}\overline{N}\omega$. Similarly we define
$$
{}^{w}\! N:=\omega^{-1}N\omega,\quad N^{w}:=\omega N\omega^{-1}\quad\text{and}\quad \overline{N}^{w}:=\omega\overline{N}\omega^{-1}.
$$

First let $\lambda\in \fa_{\C}^{*}$ satisfy (\ref{eq lambda condition}).
We then have
\begin{align*}
&\int_{K}\big(J_{w}(\lambda)u\big)(k)v(k)\,dk
=\int_{K}\int_{{}^{w}\!\overline{N}\cap N}u(k\omega n)\,dn\,v(k)\,dk\\
&\qquad=\int_{K}\int_{\overline{N}\cap N^{w}}u(kn\omega)v(k)\,dn\,dk
=\int_{K}\int_{\overline{N}\cap N^{w}}u(kn\omega)v(kn)\,dn\,dk
\end{align*}
Since 
$$
u(gan\omega)v(gan)=a^{\rho+w\rho}u(g\omega)v(g)\qquad(g\in G, a\in A, n\in \overline{N}\cap \overline{N}^{w})
$$
there exists a smooth function $\phi:G\to\C$ so that
$$
u(g\omega)v(g)=\int_{\overline{N}\cap \overline{N}^{w}}\int_{A}\phi(g\overline{n}a)\,da\,d\overline{n}\qquad(g\in G)
$$
with absolutely convergent integrals.
Using (\ref{eq Decomp of N integral}) and (\ref{eq Integral formula Iwasawa decomp}) we obtain
\begin{align*}
&\int_{K}\big(J_{w}(\lambda)u\big)(k)v(k)\,dk
=\int_{K}\int_{\overline{N}\cap N^{w}}\int_{\overline{N}\cap \overline{N}^{w}}\int_{A}\phi(kn\overline{n}a)\,da\,d\overline{n}\,dn\,dk\\
&\qquad=\int_{K}\int_{\overline{N}}\int_{A}\phi(k\overline{n}a)\,da\,d\overline{n}\,dk
=\int_{G}\phi(g)\,dg.
\end{align*}
We now use the right invariance of the Haar measure on $G$ and  again (\ref{eq Decomp of N integral}) and (\ref{eq Integral formula Iwasawa decomp}). This yields
\begin{align*}
&\int_{G}\phi(g)\,dg
=\int_{G}\phi(g\omega^{-1})\,dg
=\int_{K}\int_{\overline{N}}\int_{A}\phi(k\overline{n}a\omega^{-1})\,da\,d\overline{n}\,dk\\
&\qquad=\int_{K}\int_{\overline{N}\cap{}^{w}\! N}\int_{\overline{N}\cap{}^{w}\!\overline{N}} \int_{A}\phi(kn\overline{n}a\omega^{-1})\,da\,d\overline{n}\,dn\,dk\\
&\qquad=\int_{K}\int_{\overline{N}\cap{}^{w}\! N}\int_{\overline{N}^{w}\cap\overline{N}} \int_{A}\phi(kn\omega^{-1}\overline{n}a)\,da\,d\overline{n}\,dn\,dk\\
&=\int_{K}\int_{\overline{N}\cap{}^{w}\! N}u(kn)v(kn\omega^{-1})\,dn\,dk
=\int_{K}u(k)\big(J_{w^{-1}}(-w\lambda)v\big)(k)\,dk.
\end{align*}
This proves the assertion for $\lambda\in \fa_{\C}^{*}$ satisfying (\ref{eq lambda condition}). The identity follows for other $\lambda\in\fa_{\C}^{*}$ by meromorphic continuation.
\end{proof}

An important corollary of Lemma \ref{Lemma dual of intertwiner} is that the standard intertwining operator $J_{w}(\lambda)$ uniquely extends to a continuous operator
$$
J_{w}(\lambda):\cH_\lambda^{-\infty}\to \cH_{w\lambda}^{-\infty}
$$
given by
\begin{equation}\label{eq Extension of intertwiner to generalized vectors}
J_{w}(\lambda)\eta:=\eta\circ J_{w^{-1}}(-w\lambda)\qquad (\eta\in\cH_{\lambda}^{-\infty}).
\end{equation}
When the intertwining operators are viewed as maps $\cD'(K/M)\to\cD'(K/M)$, they form a meromorphic family with family parameter $\lambda\in\fa_{\C}^{*}$. This family is the unique meromorphic family of $G$-equivariant linear maps, which map $\eta_{\lambda}$ to $\cfunc_{w}(\lambda)\eta_{w\lambda}$.

We now normalize the operators $J_{w}(\lambda)$ by setting
\begin{equation}\label{eq Def I}
I_{w}(\lambda):=\frac{1}{\cfunc_{w}(\lambda)}J_{w}(\lambda):\cH_\lambda^{-\infty}\to \cH_{w\lambda}^{-\infty}
\qquad(\lambda\in \fa_{\C}^{*}).
\end{equation}
Note that $I_{w}(\lambda)$ maps $\eta_\lambda$ to $\eta_{w\lambda}$. The operators $I_{w}(\lambda)$ are called normalized standard intertwining operators.
When the normalized intertwining operators are viewed as maps $\cD'(K/M)\to\cD'(K/M)$, they form a meromorphic family with family parameter $\lambda\in\fa_{\C}^{*}$. This family is the unique meromorphic family of $G$-equivariant linear maps, which map $\eta_{\lambda}$ to $\eta_{w\lambda}$ for all $\lambda$.  The uniqueness of the family implies that
$$
I_{w_{2}}(w_{1}\lambda)\circ  I_{w_{1}}(\lambda)=  I_{w_{2}w_{1}}(\lambda) \qquad (w_{1},w_{2}\in W).
$$
In particular
$$ I_w(\lambda)^{-1}=  I_{w^{-1}}(w\lambda) \qquad (w\in W).$$

If $\lambda\in i\fa^{*}$, then $\pi_{\lambda}$ is unitary and irreducible. We may therefore apply Schur's lemma from \cite[Lemma 0.5.2]{W}
and obtain that up to multiplication by a constant, the adjoint $I_{w}(\lambda)^{\dagger}$ and the inverse of $I_{w}(\lambda)$ coincide.
Since
$$
\langle \eta_{\lambda}, I_{w}(\lambda)^{\dagger}I_{w}(\lambda) \eta_{\lambda}\rangle
=\langle I_{w}(\lambda) \eta_{\lambda}, I_{w}(\lambda) \eta_{\lambda}\rangle
=\langle \eta_{\lambda},\eta_{\lambda}\rangle
=1
$$
it follows that in fact $I_{w}(\lambda)$ is unitary for all $\lambda\in i\fa^{*}$.
The meromorphic family $\lambda\mapsto I_{w}(\lambda)$ is therefore holomorphic on an open neighborhood of $i\fa^{*}$. See also \cite[Section XIV, \S 6]{Knapp}.

\begin{rmk} \label{rmk Symmetry spherical functions} Since the normalized intertwining operators $I_{w}(\lambda)$ are isomorphisms for $\lambda\in i\fa^{*}$ and $w\in W$ we have $[\pi_\lambda]=[\pi_{w\lambda}]$. This implies by analytic continuation that
$\phi_\lambda=\phi_{w\lambda}$ for all $w\in W$ and $\lambda\in\af_{\C}^{*}$.

If $\lambda,\lambda'\in \fa_{\C}^{*}$ then $\pi_{\lambda}$ and $\pi_{\lambda'}$ have equal infinitesimal character if and only if $\lambda\in W\lambda'$. See \cite[Propositions 8.20 \& 8.22]{Knapp}. Therefore, $[\pi_\lambda]=[\pi_{\lambda'}]$ for $\lambda, \lambda'\in i\fa^{*}$ if an only if $\lambda\in W\lambda'$.
\end{rmk}

\subsection{Principal asymptotics of $K$-fixed vectors}
For $\lambda\in \fa_{\C}^{*}$, $\xi\in \cH_{\lambda}^{-\infty}$ and $v\in \cH_{-\lambda}^{\infty}$ we define the matrix coefficient
$$
m_{v,\xi}:G\to\C,\quad g\mapsto \xi\big(\pi_{-\lambda}(g^{-1})v\big).
$$
In this section we investigate the asymptotic behavior along geodesics through the origin $eK\in Z$ of matrix coefficients $m_{v,\eta_{\lambda}}$ with $v\in \cH_{-\lambda}^{\infty}$. The approach is motivated by  \cite[Lemma 3.12]{Langlands}. See \cite[Section 5]{KKOS} for a more general result for real spherical subgroups.

\begin{theorem}\label{Thm Principal asymptotics}
Let $\lambda\in\fa_{\C}^{*}$ and $X\in\fa^{--}$.  Let $w_{0}\in W$ be the longest element and let $\omega_{0}\in N_{K}(\fa)$ be a representative for $w_{0}$. If $v\in \cH_{-\lambda}^{\infty}$ satisfies $\supp(v)\subset \omega_{0}N\overline{P}$, then for every $a\in A$ the limit
$$
\lim_{t\to\infty}e^{-t(w_{0}\lambda+\rho)(X)}m_{v,\eta_{\lambda}}\big(a\exp(tX)\big)
$$
exists and is equal to $J_{w_{0}}(-\lambda)v(a)$.
Moreover, let $\mu(X):=\max\{\alpha(X)\mid\alpha\in \Sigma^{+}\}$. Then for every compact subset $B\subseteq A$  there exists a $C>0$ so that for every $a\in B$ and $t\geq 0$
$$
\Big|m_{v,\eta_{\lambda}}\big(a\exp(tX)\big)- J_{w_{0}}(-\lambda)v\big(a\exp(tX)\big)\Big|
\leq C e^{t\big(\Re w_{0}\lambda+\rho+\mu\big)(X)}q(v)
$$
with
$$
q(v)=\int_{N}\|\log (n)\| \big|v\big(\omega_{0}n\big)\big|\,dn.
$$
\end{theorem}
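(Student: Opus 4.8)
The plan is to exploit the integral formula for the intertwiner together with the Bruhat-type integral formula to turn the matrix coefficient into an explicit integral over $N$, and then to push the geodesic parameter to infinity inside that integral. Write $g = a\exp(tX)$ and recall $m_{v,\eta_\lambda}(g) = \eta_\lambda(\pi_{\lambda}(g^{-1})v) = \int_K v(gk)\,dk$ after identifying $\eta_\lambda$ with integration over $K$. Since $\supp(v)\subset \omega_0 N\overline{P}$ and $\omega_0 N\overline{P}$ is open and dense with complement the lower-dimensional Bruhat cells, the function $k\mapsto v(gk)$ is supported away from the "bad set'', and by Lemma \ref{Lemma Integral identity for Phi} (applied to the function $k\mapsto$ something transforming by $a^{2\rho}$ built from $v$, after accounting for the $\overline{P}$-equivariance of elements of $\cH_{-\lambda}^\infty$) one rewrites $\int_{K/M} \cdots\, dkM$ as an integral $\int_N \cdots\, dn$. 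Concretely, after this change one expects
$$
m_{v,\eta_\lambda}\big(a\exp(tX)\big) = \int_{N} v\big(a\exp(tX)\,\omega_0 n\big)\,\acomp\big(a\exp(tX)\,\omega_0 n\big)^{\text{(some power)}}\,dn,
$$
and the point is that $a\exp(tX)\omega_0 n$ can be moved, using that $\omega_0$ conjugates $N$ to $\overline N$ and the $\overline P$-equivariance of $v$, into a form where the $\exp(tX)$ factor contributes exactly the scalar $e^{t(w_0\lambda+\rho)(X)}$ times $v(a\,\omega_0 n')$ with $n' = \exp(tX)^{-1} n \exp(tX)$ a rescaled variable; the Jacobian of $n\mapsto n'$ on $N$ is $e^{-t\cdot 2\rho(X)}$ or similar, and all these powers combine so that after dividing by $e^{t(w_0\lambda+\rho)(X)}$ one is left with $\int_N v(a\omega_0 n)\cdot(\text{factor}\to 1)\,dn = J_{w_0}(-\lambda)v(a)$ in the limit.

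For the existence of the limit and the quantitative estimate, the strategy is a first-order Taylor expansion in $t$ of the integrand. Having normalized so that the $t$-dependence sits in a factor of the form $\acomp\big(\exp(-tX)n\exp(tX)\cdots\big)^{\rho-\lambda}$ (or the analogous $w_0$-twisted expression) multiplying $v(a\omega_0 n)$, one writes this factor as $1 + O(t\,e^{-t\delta})$ uniformly, where $\delta>0$ comes from the fact that conjugation by $\exp(-tX)$ with $X\in\fa^{--}$ contracts $N$. The weight $\|\log(n)\|$ in $q(v)$ is exactly what controls the linear term of this Taylor expansion: differentiating $t\mapsto \acomp(\exp(-tX)n\exp(tX))^{\rho-\lambda}$ at $t=0$ produces a factor linear in $\log n$ (via $\ad(X)$ acting on $\log n$), which is why $q(v) = \int_N \|\log(n)\|\,|v(\omega_0 n)|\,dn$ appears; the exponent $(\Re w_0\lambda+\rho+\mu)(X)$ reflects that the contracted directions decay at worst at the rate governed by the largest positive root value $\mu(X)$. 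The compact set $B$ enters only to make constants uniform in $a$, which is harmless since $a\mapsto v(a\omega_0 n)$ and its derivatives are continuous and $\supp(v)$ meets $B\omega_0 N\overline P$ in a controlled way.

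The main obstacle is the bookkeeping in the change of variables: one must carefully track (i) the $\overline P = MA\overline N$ equivariance $v(g\,ma\overline n) = \sigma_{-\lambda}(ma\overline n)^{-1}v(g)$ specialized to $\sigma=\1$, i.e. $v(ga\overline n) = a^{\rho+\lambda}v(g)$, (ii) the effect of $\omega_0$, which satisfies $\omega_0 N\omega_0^{-1} = \overline N$ and $w_0\Sigma^+ = -\Sigma^+$, on the exponents, and (iii) the Jacobian of the contracting conjugation on $N$, which is $\det\big(\Ad(\exp(-tX))|_{\fn}\big) = e^{-2t\rho(X)}$. These three powers of $a^{(\cdot)}$ must cancel against each other to leave precisely $e^{t(w_0\lambda+\rho)(X)}$; any sign error here changes the answer. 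A secondary technical point is justifying that one may interchange the limit $t\to\infty$ with the integral over $N$ — this follows from dominated convergence once the uniform bound $\big|\text{integrand}\big| \le C\,|v(\omega_0 n)|$ (for $t$ bounded below, with the tail handled by the $e^{-t\delta}$ decay) is in hand, which is the same estimate that yields the stated inequality. I would organize the proof as: (1) reduce $m_{v,\eta_\lambda}(a\exp(tX))$ to an integral over $N$ via Lemma \ref{Lemma Integral identity for Phi}; (2) perform the $\omega_0$- and $\exp(tX)$-conjugation to isolate the scalar $e^{t(w_0\lambda+\rho)(X)}$ and a rescaled integral $\int_N v(a\omega_0 n)\,h_t(n)\,dn$ with $h_t(n)\to 1$ pointwise; (3) estimate $|h_t(n)-1| \le C\,\|\log n\|\,e^{t(\mu-\text{something})(X)}$ by Taylor expansion; (4) conclude existence of the limit ($=J_{w_0}(-\lambda)v(a)$, recognizing the limiting integral as the defining integral of the intertwiner, valid here because $\supp(v)\subset\omega_0 N\overline P$ makes it absolutely convergent without condition \eqref{eq lambda condition}) and the quantitative bound, uniformly for $a\in B$.
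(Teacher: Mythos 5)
Your proposal is correct and follows essentially the same route as the paper: rewrite $m_{v,\eta_\lambda}$ as an integral over $N$ via Lemma~\ref{Lemma Integral identity for Phi}, conjugate by $\exp(tw_0X)$ and change variables to extract the scalar $e^{t(w_0\lambda+\rho)(X)}$ and a factor $\acomp\big(\exp(-tw_0X)n\exp(tw_0X)\big)^{\rho-\lambda}\to 1$, then apply dominated convergence for the limit and a first-order Taylor estimate (producing the $\|\log n\|$ weight and the rate $e^{t\mu(X)}$) for the error. Two small inaccuracies in your sketch do not affect the plan: the $\acomp$ factor that appears after applying the lemma is $\acomp(n)^{\rho-\lambda}=\eta_\lambda(n)$ (a function of $n$ alone, not of $a\exp(tX)\omega_0 n$), and the conjugation/Jacobian must be taken with the twisted element $\exp(tw_0X)=w_0^{-1}\cdot\exp(tX)$ — which you did flag as a possibility. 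One organizational difference: the paper establishes the limit and estimate at $a=\1$ and then deduces the case of general $a$ by $G$-equivariance of $J_{w_0}(-\lambda)$, whereas you propose keeping $a$ inside the integral throughout; both are valid and yield $J_{w_0}(-\lambda)v(a)$ after using $v(a\omega_0 n)=(w_0^{-1}\cdot a)^{\rho+\lambda}v(\omega_0(w_0^{-1}\cdot a)n(w_0^{-1}\cdot a)^{-1})$ and the corresponding Jacobian.
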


\begin{proof}
By Lemma \ref{Lemma Integral identity for Phi} the integral over $K$ in  (\ref{pairing}) can be rewritten as
$$
\int_{K/M} \big(f_1(k), f_2(k)\big)_\sigma \,dk
=\int_{N}\big(f_1(n), f_2(n)\big)_\sigma\, dn
$$
for all $\sigma\in\hat{M}$,  $f_{1}\in \cH_{\sigma, \lambda}^\infty$ and $f_{2}\in \cH_{\sigma', -\lambda}^\infty$.
We apply this to the matrix coefficient $m_{v,\eta_{\lambda}}$. We thus obtain for $g\in G$
$$
m_{v,\eta_{\lambda}}(g)
=m_{v,\eta_{\lambda}}(g\omega_{0})
=\int_{N}\eta_{\lambda}(n) v(g\omega_{0}n)\,dn
=\int_{N}\acomp(n)^{-\lambda+\rho} v(g\omega_{0}n)\,dn.
$$
For every $a\in A$ it follows that
\begin{align}
m_{v,\eta_{\lambda}}(a)
\nonumber&=\int_{N}\acomp(n)^{-\lambda+\rho} v(a\omega_{0}n)\,dn\\
\nonumber&=(w_{0}^{-1}\cdot a)^{\lambda+\rho}\int_{N}\acomp(n)^{-\lambda+\rho} v\big(\omega_{0}(w_{0}^{-1}\cdot a)n(w_{0}^{-1}\cdot a)^{-1}\big)\,dn\\
\label{eq Expression matrix coeff}
    &=(w_{0}^{-1}\cdot a)^{\lambda-\rho}\int_{N}\acomp\big((w_{0}^{-1}\cdot a)^{-1}n(w_{0}^{-1}\cdot a)\big)^{-\lambda+\rho} v\big(\omega_{0}n\big)\,dn.
\end{align}
For the last equality we used a change of variables.
For every $n\in N$ the element $\exp\big(-tw_{0} X\big)n\exp\big(tw_{0} X\big)$ converges for $t\to\infty$ to $\1$ and hence
$$
\lim_{t\to\infty}\acomp\Big(\exp\big(-tw_{0} X\big)n\exp\big(tw_{0} X\big)\Big)^{-\lambda+\rho}=1 \qquad(n\in N).
$$
Since $\supp(v)\subset \omega_{0}N\overline{P}$, the function  $N\ni n\mapsto v(\omega_{0}n)$ has compact support.
By the Lebesgue dominated convergence theorem we therefore obtain
$$
\lim_{t\to\infty}e^{-t(w_{0}\lambda+\rho)(X)}m_{v,\eta_{\lambda}}\big(\exp(tX)\big)
=\int_{N} v\big(\omega_{0}n\big)\,dn.
$$
The latter is equal to $J_{w_{0}}(-\lambda)v(\1)$.

Let now $a\in A$. Then
\begin{align*}
&\lim_{t\to\infty}e^{-t(w_{0}\lambda+\rho)(X)}\,m_{v,\eta_{\lambda}}\big(a\exp(tX)\big)
=\lim_{t\to\infty}e^{-t(w_{0}\lambda+\rho)(X)}\,m_{\pi_{-\lambda}(a^{-1})v,\eta_{\lambda}}\big(\exp(tX)\big)\\
&\qquad=J_{w_{0}}(-\lambda)\big(\pi_{-\lambda}(a^{-1})v\big)(\1)
=\pi_{-w_{0}\lambda}(a^{-1})J_{w_{0}}(-\lambda)v(\1)
=J_{w_{0}}(-\lambda)v(a).
\end{align*}

We move on to the error estimate. For $a\in A$ we define
$$
R(a):=\Big|m_{v,\eta_{\lambda}}(a)- J_{w_{0}}(-\lambda)v(a)\Big|.
$$
By (\ref{eq Expression matrix coeff}) we have
\begin{align*}
R(a)
& = \Big|a^{w_{0}\lambda+\rho}\int_{N}\acomp\big((w_{0}^{-1} a)^{-1}n(w_{0}^{-1} a)\big)^{\lambda+\rho} v\big(\omega_{0}n\big)\,dn
    -\int_{N} v\big(a\omega_{0}n\big)\,dn\Big|\\
&=a^{w_{0}\Re\lambda+\rho}\Big|\int_{N}\Big(\acomp\big((w_{0}^{-1} a)^{-1}n(w_{0}^{-1} a)\big)^{\lambda+\rho} -1\Big) v\big(\omega_{0}n\big)\,dn\Big|.
\end{align*}
By the theorem of Taylor there exists a $c>0$ so that
$$
\big|\acomp\big((w_{0}^{-1} a)^{-1}n(w_{0}^{-1} a)\big)^{\lambda+\rho} -1\big|
\leq c\|\Ad\big((w_{0}^{-1} a)^{-1}\big)\log(n)\|
$$
for all $n\in N$ in a sufficiently small neighborhood of $\1$ in $N$.
Note that
$$
\|\Ad\big((w_{0}^{-1} a)^{-1}\big)Y\|
\leq \max \{a^{\alpha}\mid\alpha\in\Sigma^{+}\}\|Y\|
\qquad (Y\in\fn, a\in A).
$$
Therefore,
$$
R(a)
\leq  a^{w_{0}\Re\lambda+\rho}\max \{a^{\alpha}\mid\alpha\in\Sigma^{+}\}\int_{N}\|\log (n)\| \big|v\big(\omega_{0}n\big)\big|\,dn
\qquad(a\in A),
$$
and hence
$$
R\big(a\exp(tX)\big)
\leq C e^{t\big(w_{0}\Re\lambda+\rho+\mu\big)(X)}q(v)
\qquad(a\in B, t\geq 0),
$$
where
$$
C
=c \max_{a\in B}\Big(a^{w_{0}\Re\lambda+\rho}\max \{a^{\alpha}\mid\alpha\in\Sigma^{+}\}\Big).
$$
\end{proof}

\begin{cor}\label{Cor asymptotic behavior of matrix coefficients}
Let $\lambda\in\fa_{\C}^{*}$ and $X\in\fa^{--}$.  Let $w_{0}\in W$ be the longest element and let $\omega_{0}\in N_{K}(\fa)$ be a representative for $w_{0}$. There exists a $v\in \cH_{-\lambda}^{\infty}$ with $\supp(v)\subset \omega_{0}N\overline{P}$ and for every $\epsilon>0$ there exists a $c>0$ so that
$$
\frac{1}{c}e^{t(w_{0}\Re\lambda+\rho)(X)-t\epsilon}
\leq \big|m_{v,\eta_{\lambda}}\big(\exp(tX)\big)\big|
\leq ce^{t(w_{0}\Re\lambda+\rho)(X)+t\epsilon}
\qquad(t\geq0).
$$
\end{cor}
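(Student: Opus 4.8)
The plan is to read the estimate off from the asymptotic expansion of Theorem \ref{Thm Principal asymptotics}, choosing $v$ so that its leading coefficient $\int_N v(\omega_0 n)\,dn$ is nonzero and, crucially, so that the error term of that theorem never dominates the leading term for $t\ge 0$.

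First I would pin down the candidate $v$. Since $N\overline P$ is the open dense Bruhat cell, $\omega_0 N\overline P$ is open in $G$, so every $\phi\in C_c^\infty(N)$ extends by zero to an element $v=v_\phi\in\cH_{-\lambda}^\infty$ with $\supp v_\phi\subseteq\omega_0 N\overline P$, determined by $v_\phi(\omega_0 n\overline p)=\sigma_{-\lambda}(\overline p)^{-1}\phi(n)$; in particular $v_\phi(\omega_0 n)=\phi(n)$. I take $\phi\ge 0$, $\phi\not\equiv 0$, supported in a small neighborhood $U$ of $\1$ in $N$ (to be shrunk below), so that $C_0:=\int_N v_\phi(\omega_0 n)\,dn=\int_N\phi>0$ while $q(v_\phi)=\int_N\|\log n\|\,\phi(n)\,dn\le\delta(U)\,C_0$, where $\delta(U):=\sup_{n\in U}\|\log n\|\to 0$ as $U$ shrinks to $\{\1\}$.

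Next I apply Theorem \ref{Thm Principal asymptotics} with $a=\1$. Since $J_{w_0}(-\lambda)v_\phi\in\cH_{-w_0\lambda}^\infty$ and elements of $\cH_{-w_0\lambda}^\infty$ transform under $A$ by the character $a\mapsto a^{\rho+w_0\lambda}$, one has $J_{w_0}(-\lambda)v_\phi(\exp tX)=e^{t(w_0\lambda+\rho)(X)}J_{w_0}(-\lambda)v_\phi(\1)=e^{t(w_0\lambda+\rho)(X)}C_0$, whose modulus is $e^{t(w_0\Re\lambda+\rho)(X)}C_0$ because $\rho$ is real and $\Re(w_0\lambda)=w_0\Re\lambda$. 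The theorem then yields a constant $C>0$ with
$$\Bigl|\,|m_{v_\phi,\eta_\lambda}(\exp tX)|-e^{t(w_0\Re\lambda+\rho)(X)}C_0\,\Bigr|\le C\,e^{t(w_0\Re\lambda+\rho)(X)}e^{t\mu(X)}\,q(v_\phi)\qquad(t\ge 0);$$
moreover, inspecting the proof shows that for $B=\{\1\}$ this $C$ is just the Taylor constant $c$ attached to a fixed small neighborhood $U_0$ of $\1$ (where $|\acomp(n)^{\lambda+\rho}-1|\le c\|\log n\|$), and the same $c$ serves for every $v_\phi$ with $\supp\phi\subseteq U_0$. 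The key point is that $X\in\fa^{--}$ forces $\mu(X)=\max_{\alpha\in\Sigma^+}\alpha(X)<0$, so $e^{t\mu(X)}\le 1$ for all $t\ge 0$. Hence, after fixing $U\subseteq U_0$ small enough that $C\,\delta(U)\le\tfrac12$, the displayed inequality gives
$$\tfrac12 C_0\,e^{t(w_0\Re\lambda+\rho)(X)}\le|m_{v_\phi,\eta_\lambda}(\exp tX)|\le\tfrac32 C_0\,e^{t(w_0\Re\lambda+\rho)(X)}\qquad(t\ge 0),$$
which is the assertion (in fact with $\epsilon=0$) upon taking $c=\max\{2/C_0,\tfrac32 C_0\}$; for an arbitrary $\epsilon>0$ the same bounds hold a fortiori since $e^{-t\epsilon}\le 1\le e^{t\epsilon}$ for $t\ge 0$.

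The only genuine obstacle is the lower bound at \emph{small} $t$: merely splitting off the $t\to\infty$ limit from Theorem \ref{Thm Principal asymptotics} controls $|m_{v,\eta_\lambda}(\exp tX)|$ only for $t$ beyond some threshold, and on the remaining compact $t$-interval the matrix coefficient could a priori vanish, destroying any lower bound of the asserted form. Concentrating $v$ near $\1\in N$—which drives $q(v)/C_0$ to zero while keeping the error constant $C$ bounded, so that the error term is at most half the leading term uniformly in $t\ge 0$—is exactly what removes this difficulty. Everything else (smoothness of the extension-by-zero $v_\phi$, the evaluation $J_{w_0}(-\lambda)v_\phi(\1)=\int_N v_\phi(\omega_0 n)\,dn$, and the uniformity of $C$) is routine given Theorem \ref{Thm Principal asymptotics}.
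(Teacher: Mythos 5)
Your proof is correct, and it is actually more careful than the paper's own one‐line argument. The paper's proof of this corollary appeals only to the first assertion of Theorem \ref{Thm Principal asymptotics} (existence of the limit), together with the choice of $v$ so that $J_{w_0}(-\lambda)v(\1)\neq 0$. Taken literally, the limit statement yields the two‐sided bound only for $t$ beyond some threshold $T$, leaving the lower bound on $[0,T]$ unaddressed; and, as you correctly point out, a generic $v$ with nonzero limit could perfectly well have $m_{v,\eta_\lambda}(\exp(tX))=0$ at some $t_0\ge 0$ (for instance, $\int_N \acomp(n)^{-\lambda+\rho}v(\omega_0 n)\,dn$ and $\int_N v(\omega_0 n)\,dn$ are independent linear conditions), which would destroy any lower bound of the stated form, whatever $c$ and $\epsilon$ are. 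The corollary is used only in the proof of Theorem \ref{Thm classification of tempered reps}, where the large-$t$ estimate suffices, which presumably explains the terseness; but as stated the corollary claims $t\ge 0$, and you have supplied the missing step.

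Your fix is exactly the right one: you also invoke the quantitative error bound in Theorem \ref{Thm Principal asymptotics}, note that $\mu(X)<0$ for $X\in\fa^{--}$ so the relative error $e^{t\mu(X)}$ is $\le 1$ for all $t\ge 0$, and then shrink $\supp\phi$ so that the error is uniformly at most half the leading term. The one delicate point is the uniformity of the constant $C$ as $\supp\phi$ shrinks; you flag this, and it does hold because for $B=\{\1\}$ the constant in the proof of Theorem \ref{Thm Principal asymptotics} reduces to the Taylor constant on a fixed neighborhood $U_0$ of $\1$, and $\Ad(\exp(-tw_0X))$ contracts $\fn$ for $t\ge 0$, so the argument of $\acomp$ never leaves $U_0$ once $\supp\phi\subset U_0$. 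Your construction therefore delivers the estimate for all $t\ge 0$ with $\epsilon=0$, which is strictly stronger than the assertion; the $\epsilon$ in the corollary plays no essential role in your argument.
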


\begin{proof}
The corollary follows directly from the first assertion in  Theorem  \ref{Thm Principal asymptotics} by choosing $v$ so that $J_{w_{0}}(-\lambda)v(\1)\neq0$.
\end{proof}

\subsection{$Z$-Temperedness revisited}\label{Subsection Temperedness revisited}
As an application of the theory of principal asymptotics we provide here the proof for the remaining implication in Theorem \ref{Thm classification of tempered reps}: for every tempered irreducible spherical unitary representation $\pi$ of $G$ there exists a $\lambda\in i\af^*$ so that $[\pi]=[\pi_\lambda]$.

\begin{proof}
The dual $\pi'$ of $\pi$ is also spherical.
By Theorem \ref{Thm Casselman subrep thm} there exists a $\lambda\in \fa_{\C}^{*}$ so that $\pi'$ embeds $G$-equivariantly into $\cH_{\lambda}$.
The space of $K$-fixed vectors in $\cH_{\pi'}$ is  identified with $\C\eta_{\lambda}$. By taking duals we find that $\pi$ is a quotient of $\cH_{-\lambda}$. If $u\in \cH_{\pi}$ and $v\in\cH_{-\lambda}$ is in the preimage of $u$, then $m_{u,\eta_{\lambda}}=m_{v,\eta_{\lambda}}$. Let $X\in \fa^{--}$ and let $w_{0}$ be the longest Weyl group element. By Corollary \ref{Cor asymptotic behavior of matrix coefficients} there exists a $v\in\cH_{-\lambda}^{\infty}$ and for every $\epsilon>0$ there exists a $c>0$ so that
$$
|m_{v,\eta_{\lambda}}\big(\exp(tX)\big)|\geq c e^{t(w_{0}\Re\lambda+\rho)(X)-t\epsilon} \qquad (t\geq0).
$$
Since $\pi$ is tempered, it follows from (\ref{tempered}) that there exists an $r\in \R$ so that
$$
\sup_{a\in A}|m_{v,\eta_{\lambda}}(a)|a^{-\rho}(1+\|\log(a)\|)^{r}<\infty.
$$
This implies that $w_{0}\Re\lambda(X)\leq 0$. Since this holds for all $X\in \fa^{--}$, it follows that
\begin{equation}\label{eq first inequality}
\Re\lambda\in \sum_{\alpha\in\Sigma^{+}}\R_{\leq0}\alpha.
\end{equation}

We will now apply a standard intertwining operator and repeat the above argument.
We claim that there exists a $w\in W$ so that $\langle\Re\lambda,\alpha\rangle> 0$ for $\alpha\in -\Sigma^{+}$, if and only if $\alpha\in -\Sigma^{+}\cap w^{-1}\Sigma^{+}$.  To prove this we define
$$
S
:=\{\alpha\in \Sigma\mid \langle\Re\lambda,\alpha\rangle>0\}\cup\{\alpha\in \Sigma^{+}\mid \langle\Re\lambda,\alpha\rangle=0\}.
$$
It is easy to see that $S$ is a positive system of $\Sigma$. Therefore, there exists a $w\in W$ so that $\Sigma^{+}=wS$. This element $w$ satisfies the conditions in the claim.

The vector $J_{w}(\lambda)v$ is for every $v\in \cH_{\lambda}^{\infty}$ defined by absolutely convergent integrals by (\ref{eq lambda condition}) and it follows from the formula of Gindikin and Karpelevich that $\cfunc_{w}$ does not have any zero's. Therefore, the vector $J_{w}(\lambda)\eta_{\lambda}=\cfunc_{w}(\lambda)\eta_{w\lambda}$ is a non-zero multiple of $\eta_{w\lambda}\in\cH_{w\lambda}$.

Let $X\in \fa^{--}$.
By Corollary \ref{Cor asymptotic behavior of matrix coefficients} with $w\lambda$ in place of $\lambda$ there exists a $v\in\cH_{-w\lambda}^{\infty}$ and for every $\epsilon>0$ there exists a $c>0$ so that
\begin{equation}\label{eq Lower bound matrix coeff}
|m_{v,\eta_{w\lambda}}\big(\exp(tX)\big)|\geq c e^{t(w_{0}w\Re \lambda+\rho)(X)-t\epsilon} \qquad (t\geq0).
\end{equation}
In view of Lemma \ref{Lemma dual of intertwiner} we have
$$
m_{v,\eta_{w\lambda}}
=\frac{1}{\cfunc_{w}(\lambda)}m_{J_{w^{-1}}(-w\lambda)v, \eta_{\lambda}}.
$$
It thus follows from (\ref{eq Lower bound matrix coeff}) that for every $\epsilon>0$ there exists a $v\in \cH_{-w\lambda}^{\infty}$ and a $C>0$ so that
$$
|m_{J_{w^{-1}}(-w\lambda)v, \eta_{\lambda}}\big(\exp(tX)\big)|
\geq Ce^{t(w_{0}w\Re \lambda+\rho)(X)-t\epsilon}
\qquad(t\geq 0).
$$
As before, since $\pi$ is tempered, we have
$$
\sup_{a\in A}|m_{J_{w^{-1}}(-w\lambda)v, \eta_{\lambda}}(a)|a^{-\rho}(1+\|\log(a)\|)^{r}<\infty.
$$
This implies that $w_{0}w\Re \lambda(X)\leq 0$ for all $X\in \fa^{--}$, and hence
$$
\Re\lambda
\in w^{-1} \Big(\sum_{\alpha\in\Sigma^{+}}\R_{\leq0}\alpha\Big).
$$
In view of (\ref{eq first inequality}) we thus have
$$
\Re\lambda
\in \Big(\sum_{\alpha\in\Sigma^{+}}\R_{\leq0}\alpha\Big)\cap w^{-1} \Big(\sum_{\alpha\in\Sigma^{+}}\R_{\leq0}\alpha\Big)
=\sum_{\alpha\in \Sigma^{+}\cap w^{-1}\Sigma^{+}}\R_{\leq 0}\alpha.
$$
The condition on $w$ guarantees that $\langle\Re\lambda,\alpha\rangle\geq 0$ for all $\alpha\in \Sigma^{+}\cap w^{-1}\Sigma^{+}$. For $\alpha\in \Sigma^{+}\cap w^{-1}\Sigma^{+}$ let $c_{\alpha}\leq 0$ be so that $\Re\lambda=\sum_{\alpha\in \Sigma^{+}\cap w^{-1}\Sigma^{+}}c_{\alpha}\alpha$. Then
$$
\|\Re\lambda\|^{2}
=\sum_{\alpha\in \Sigma^{+}\cap w^{-1}\Sigma^{+}}c_{\alpha}\langle\Re\lambda,\alpha\rangle
\leq 0.
$$
It thus follows that $\Re\lambda=0$.
This finishes the proof of Theorem \ref{Thm classification of tempered reps}.
\end{proof}

\section{The Plancherel formula for $L^2(Z_\empt)$}\label{Section Planch for Z_empt}

In this section we introduce a homogeneous space $Z_{\empt}$ of $G$, which is asymptotically similar to $Z$.
The explicit Plancherel decomposition of $L^{2}(Z_{\empt})$ is fairly easy to derive.
In Section \ref{Section Proof of Plancherel formula} we will see that the abstract Plancherel formula for $L^2(Z)$ induces a decomposition of $L^2(Z_\empt)$. By using the result of the current section on the Plancherel formula for $L^2(Z_\empt)$ we will then be able to derive an explicit Plancherel decomposition of $L^2(Z)$.

\subsection{Boundary degenerations}
Let $d:=\dim \kf$ and write $\Gr_d(\gf)$ for the Grassmannian of $d$-dimensional subspaces of $\gf$. Define $\kf_\empt:= \mf + \overline{\nf}$ and note that
$\dim \kf_\empt =d$.
The subalgebra $\fk_{\empt}$ can be obtained from $\fk$ by a limiting process in $\Gr_d(\gf)$ as
$$
\lim_{t\to \infty} e^{t\ad X} \kf =\kf_\empt
$$
where $X$ is any element in $\af^{--}$. This is a consequence of
$$
\kf
= \mf \oplus \bigoplus_{\alpha \in\Sigma^+} (\gf^\alpha + \gf^{-\alpha})^\theta
= \mf \oplus \bigoplus_{\alpha \in\Sigma^+} (1+\theta)\gf^{-\alpha}.
$$
We call $Z_\empt = G/M\overline{N}$ the (minimal) boundary degeneration of $Z$.

The important observation is that both $Z$ and the open $P$-orbit in $Z_{\empt}$ are isomorphic to $P/M$ as $P$-spaces. The advantage of $Z_{\empt}$ over $Z$ is that it features more symmetries: As $A$ normalizes $M\overline{N}$ we obtain for each $a\in A$ a $G$-equivariant automorphism $ gM\overline{N} \mapsto gaM\overline{N}$ of $Z_\empt$. This fact greatly simplifies the derivation of the Plancherel decomposition of $L^{2}(Z_{\empt})$.

The space $\hat Z_\empt=G/\overline{P}$ is a compact homogeneous space for $G$. Let $\overline{Z}$ be a compact smooth $G$-manifold that contains $Z$ as an open $G$-orbit and $\hat{Z}_{\empt}$ as the unique closed orbit. See for example \cite[Sect.14]{KK} for a construction of such a manifold under the assumption that $G$ is algebraic and of adjoint type. One can regard the closed $G$-orbit in $\overline{Z}$ as the orbit at ''infinity''. The $G$-space $Z_\empt$ is naturally identified with the part of the normal bundle of $\hat Z_\empt$ which points to $Z$. In this sense the space $Z_\empt$ approximates $Z$ geometrically at infinity.

We illustrate this with an example for $G=\SO_e(1,2)$ the connected component of $\SO(1,2)$. Note that $G\simeq \PSl(2,\R)$.
Then
$$\overline{Z} =\{ [x]=[x_1, x_2, x_3, x_4]\in \Pb^3(\R)\mid x_1^2 - x_2^2 -x_3^2 =x_4^2\}$$
is a $G$-space induced from the linear action of $G$  on $\R^3$ trivially extended to
$\R^4 = \R^3 \times \R$.  Observe that
$$
\overline{Z}=\{ [1, x_2, x_3, x_4]\in \Pb^3(\R)\mid  x_2^2 +x_3^2 +x_4^2=1\}
$$
is diffeomorphic to the sphere $S^2$. The orbit decomposition of $\overline{Z}$ is
$$
\overline{Z}
=Z^{+}\sqcup\hat{Z}_{\empt}\sqcup Z^{-},
$$
where
$$
Z^{\pm}=\{ [x]=[1, x_2, x_3, x_{4}]\in \Pb^3(\R)\mid  x_2^2 +x_3^2+x_{4}^{2}=1, \pm x_{4}>0\}=G\cdot [e_{1}\pm e_{4}]
$$
naturally correspond to the upper and lower hemisphere of $S^{2}$, and
$$
\hat{Z}_{\empt}=\{ [x]=[x_1, x_2, x_3, 0]\in \Pb^3(\R)\mid x_1^2 - x_2^2 -x_3^2=0\}=G\cdot [e_{1}+e_{3}]
$$
to the equator.
The stabilizers of $[e_{1}\pm e_{4}]$ are both equal to $K$ and the stabilizer of  $[e_{1}+e_{3}]$ is $\overline{P}$. Therefore, $Z^{\pm}\simeq Z$ and $\hat{Z}_{\empt}\simeq G/\overline{P}$.

The tangent space of $\Pb^{3}(\R)$ at a point $[x]$ may be identified with the space $\Hom_{\R}([x],\R^{4}/[x])$. The action of $G$ on $T\Pb^{3}(\R)$ is given by
$$
g\cdot ([x],\xi)=([g\cdot x], g\circ\xi\circ g^{-1})\qquad \big(g\in G, ([x],\xi)\in T\Pb^{3}(\R)\big).
$$
If $[x]\in\hat{Z}_{\empt}$, then the tangent space $T_{[x]}\hat{Z}_{\empt}$ corresponds to the subspace $\Hom_{\R}(\R,(\R^{3}\times\{0\})/[x])$ under this identification.
The normal bundle ${\mathsf N}:= N_{\hat Z_\empt} \overline{Z} $ to the closed orbit is therefore given by
$$
{\mathsf N} =
\Big\{ ([x],\xi)\in T\Pb^{3}(\R)\mid x\in\hat{Z}_{\empt}, \xi \in \Hom_{\R}\big([x],\mathrm{span}_{\R}(x,e_{4})/[x]\big)\Big\}.
$$
Note that ${\mathsf N}$ is stable under the action of $G$ on $T\Pb^{3}(\R)$ since $G$ acts trivially on the last coordinate in $\R^{4}$.
For every $[x]\in\hat{Z}_{\empt}$ and $v\in\mathrm{span}_{\R}(x,e_{4})/[x]$ there exists a unique $v_{4}\in\R$ so that $v=v_{4}e_{4}+[x]$. Given $\xi \in \Hom_{\R}\big(\R,\mathrm{span}_{\R}(x,e_{4})/[x]\big)$, let $\xi_{4}\in\R$ be so that $\xi(1)=\xi_{4}e_{4}+[x]$. Let $\xi^{\circ}\in \Hom_{\R}\big([e_{1}+e_{3}],\mathrm{span}_{\R}(e_{1}+e_{3},e_{4})/[e_{1}+e_{3}]\big)$ be the unique element so that $(\xi^{\circ})_{4}=1$.
The orbit decomposition of ${\mathsf N}$ is
$$
{\mathsf N}={\mathsf N}^{+}\sqcup {\mathsf N} ^{0}\sqcup{\mathsf N}^{-},
$$
where
\begin{align*}
&{\mathsf N}^{\pm}
=\{([x],\xi)\in{\mathsf N} \mid \pm\xi_{4}>0\}= G\cdot\big([e_{1}+e_{3}],\pm \xi^{\circ}\big)\simeq G/\overline{N},\\
&{\mathsf N}^{0}
=\{([x],0)\in T\Pb^{3}(\R)\mid [x]\in\hat{Z}_{\empt}\}= G\cdot \big([e_{1}+e_{3}],0\big)\simeq G/\overline{P}.
\end{align*}
Note that $M=\{e\}$ for our choice of $G$, so that $G/\overline{N}=G/M\overline{N}$.
The spaces ${\mathsf N}^{\pm}$ are the parts of the normal bundle of $\hat{Z}_{\empt}$ which point to the open orbits $Z^\pm$.

\subsection{A first decomposition}
First we introduce polar coordinates on $Z_\empt =G/M\overline{N}$. Let $z_\empt=M\overline{N}$ be the standard base point of $Z_\empt=G/M\overline{N}$. From the Iwasawa decomposition $G=KA\overline{N}$ we obtain that the polar map
$$K/M\times A \to Z_\empt, \quad (kM, a)\mapsto ka\cdot z_\empt$$
is a diffeomorphism.
In view of (\ref{eq Integral formula Iwasawa decomp}) the invariant measure $dz$ on $Z_\empt$ satisfies
\begin{equation}\label{int polar}
 \int_{Z_\empt} f(z) \,dz
=\int_{K/M}\int_A f(ka\cdot z_\varnothing)\  a^{-2\rho} \,da \,dk\qquad \big(f \in C_c(Z_\empt)\big).
\end{equation}
The normalized right regular action
\begin{equation}\label{eq normalized right action on Z_empt}
(R(a)f) (gM\overline{N}) =a^{-\rho}f(gaM\overline{N})\qquad \big(a\in A, f\in L^2(Z_\empt)\big)
\end{equation}
defines a unitary representation of the abelian group $A$
$$ A \times L^2(Z_\empt)\to L^2(Z_\empt), \quad (a, f)\mapsto R(a)f.$$
We identify $\hat A$ with $i\af^*$ as usual and decompose $L^2(Z_\empt)$ with respect to $A$. For $f\in C_c^\infty(Z_\empt)$ and $\lambda\in i\af^*$ we set
$$\hat f(\lambda)(z)= \int_A \big(R(a)f\big)(z) a^{\lambda}\,da \qquad (z\in Z_\empt)$$
and note that $\hat f(\lambda)\in \cH_\lambda^\infty$, see \eqref{def princ}. Moreover we obtain from \eqref{int polar} and the Plancherel formula for $A$ that $f\mapsto \hat{f}$ extends continuously to $L^{2}(Z_{\empt})$ and
$$
 \|f\|_{L^2(Z_\empt)}^2= \int_{i\af^*} \|\hat f(\lambda)\|_{L^2(K/M)}^2 \,d\lambda\qquad \big(f\in L^{2}(Z_{\empt})\big),
$$
where $d\lambda$ is the Lebesgue measure on $i\af^*$. Recall that
 $\|\hat f(\lambda)\|_{L^2(K/M)}= \|\hat f(\lambda)\|_{\cH_\lambda}$ for all $\lambda\in i\af^*$.
To summarize, the left regular representation $L$ of $G$ on $L^{2}(Z_{\empt})$ decomposes into a direct integral of representations $\cH_{\lambda}$. In fact we have a $G$-equivariant unitary isomorphism
 \begin{equation} \label{planch empty 1}
 \big(L, L^2(Z_\empt)\big)\to \left(\int_{i\af^*}^\oplus \pi_\lambda \,d\lambda, \int_{i\af^*}^\oplus \cH_\lambda \,d\lambda\right),
  \quad f \mapsto \big (\lambda\mapsto \hat f(\lambda)\big).
 \end{equation}

Although this a quite reasonable way to write the Plancherel decomposition of $L^{2}(Z_{\empt})$, it is unsatisfactory for the following reason. For any given $\lambda\in i\fa^{*}$ the representations $\pi_\lambda$ and $\pi_{w\lambda}$ are equivalent for every $w\in W$. Hence we wish to rewrite the integral over $i\af^*$  to an integral over the fundamental domain $i\af_+^*\subset i\af^*$ for the $W$-action so that we avoid interference between the irreducible representations in the direct integral. Doing so will give us multiplicity spaces of generic dimension $|W|$.

\subsection{Conical generalized vectors}

We start by constructing the multiplicity spaces which should be given by spaces of $M\overline{N}$-fixed vectors of some sort.
However, if $\lambda\in i\fa^{*}$ then there are no non-trivial $M\overline{N}$-fixed vectors in $\cH_{\lambda}$. In fact, if $f\in \cH_{\lambda}^{M\overline{N}}\setminus\{0\}$ and $\omega_{0}\in N_{K}(\fa)$ is a representative of the longest Weyl group element in $W$,
then in view of Lemma \ref{Lemma Integral identity for Phi} we have
\begin{align*}
\|f\|^{2}
&=\|\pi_{\lambda}(\omega_{0}^{-1})f\|^{2}
=\int_{N} |f(\omega_{0}n)|^{2} \,dn
=\int_{\overline{N}}|f(\overline{n}\omega_{0})|^{2}\,d\overline{n}\\
&=\int_{\overline{N}} |\pi_{\lambda}(\overline{n}^{-1})f(\omega_{0})|^{2}\,d\overline{n}
=|f(\omega_{0})|^{2}\int_{\overline{N}} \,d\overline{n}.
\end{align*}
The latter integral is divergent, which leads to a contradiction.
This phenomenon is a special case of the Howe-Moore vanishing theorem. See \cite[Section V.2]{HoweTan}.
The remedy for this is to use the space $\cH_{\lambda}^{-\infty}$ of generalized vectors.

For $\lambda, \mu \in\af_{\C}^{*}$ we define the space
$$
(\cH_{\lambda}^{-\infty})^{\overline{P}, \mu}
:=\Big\{ \xi \in \cH_{\lambda}^{-\infty}\mid \pi_{\lambda}(\overline{p})\xi
= a^{\rho +\mu}\xi \text{ for all }\overline{p} = m a\overline{n} \in \overline{P}
= M A\overline{N}\Big\}.
$$
The elements of  $(\cH_{\lambda}^{-\infty})^{\overline{P}, \mu}$ are called conical distributions (see \cite[Ch. II, § 5]{Hel}).
Let
\begin{equation}\label{eq Def xi^1}
\xi_{\lambda}^{\1}: \cH_{-\lambda}^\infty\to \C, \quad f\mapsto f(\1).
\end{equation}
Note that $\xi_{\lambda}^{\1}\in (\cH_{\lambda}^{-\infty})^{\overline{P}, \lambda}$.

Let $\lambda\in i\fa^{*}$.
By $G$-equivariance we have
$$
I_w(\lambda)\big((\cH_{\lambda}^{-\infty})^{\overline{P}, \mu}\big)= (\cH_{w\lambda}^{-\infty})^{\overline{P}, \mu}
\qquad\big(\mu\in \af_{\C}^*,w\in W\big).
$$
For $w\in W$ we define
\begin{equation}\label{eq Def xi^w}
\xi_\lambda^{w}
:=  I_{w^{-1}}(w\lambda)\xi_{w\lambda}^{\1}\in (\cH_\lambda^{-\infty})^{\overline{P}, w\lambda}.
\end{equation}

If $\lambda$ is regular, i.e. $\langle\lambda,\alpha\rangle\neq 0$ for all $\alpha\in\Sigma$, then we obtain from \cite[Ch.II, Th. 5.15]{Hel} that $(\cH_{\lambda}^{-\infty})^{M\overline{N}}$ is finite dimensional and
\begin{equation} \label{conic}
(\cH_{\lambda}^{-\infty})^{M\overline{N}}
= \bigoplus_{w\in W} (\cH_\lambda^{-\infty})^{\overline{P}, w\lambda}=\bigoplus_{w\in W} \C \xi_\lambda^{w}.
\end{equation}
(The condition in the theorem that $\lambda$ is simple is automatically satisfied for $\lambda\in i\fa^{*}$. In fact, the Poisson transform of the constant function $1$  on $K/M$ equals the spherical function $\phi_{\lambda}$. Therefore the Poisson transform is non-zero. Since $\pi_{\lambda}$ is irreducible the Poisson transform is injective.)

For $\lambda \in i\fa^{*}$ a regular element, we put a Hilbert space structure on $(\cH_{\lambda}^{-\infty})^{M\overline{N}}$ by requiring that $(\xi_\lambda^{w})_{w\in W}$ is an orthonormal basis.
We equip $\Hom \big((\cH_{\lambda}^{-\infty})^{M\overline{N}}, \cH_\lambda\big)$ with the Frobenius norm
$$
\Hom \big((\cH_{\lambda}^{-\infty})^{M\overline{N}}, \cH_\lambda\big)\to \R_{\geq0},\quad T\mapsto \tr( T^{\dagger}\circ T).
$$
We define the multiplicity space by
$$
\cM_\lambda
:=\big((\cH_\lambda^{-\infty})^{M\overline{N}}\big)'
$$
and equip it with the Hilbert structure induced by $(\cH_\lambda^{-\infty})^{M\overline{N}}$.
Then
$$
\Hom \big((\cH_{\lambda}^{-\infty})^{M\overline{N}}, \cH_\lambda\big)
\simeq \cH_\lambda\otimes \cM_\lambda
$$
as Hilbert spaces.

\subsection{The Plancherel theorem}

We now arrive at the following reformulation of the Plancherel theorem \eqref{planch empty 1}. Let
$$
\af_{+}^{*}
:=\{\lambda\in \fa^{*}\mid\langle\lambda,\alpha\rangle\geq0 \text{ for all }\alpha\in\Sigma^{+}\}.
$$

\begin{theorem}\label{Thm Planch thm for Z_empt}
For $f\in C_{c}(Z_{\empt})$ and $\lambda\in i\fa^{*}_{+}$ let 
$$
\cF_{\empt} f(\lambda)\in \cH_\lambda \otimes\cM_{\lambda}\simeq  \Hom \big((\cH_{\lambda}^{-\infty})^{M\overline{N}}, \cH_\lambda\big)
$$
be given by
$$
\cF_{\empt} f(\lambda)\xi= \pi_{\lambda}(f)\xi\qquad \big( \xi\in (\cH_{\lambda}^{-\infty})^{M\overline{N}}\big).
$$
Then $\cF_{\empt}$ extends uniquely  to a $G$-equivariant unitary isomorphism
\begin{align} \label{planch empty 2}
\cF_{\empt}: \big(L, L^2(Z_\empt)\big)&\to
    \left(\int_{i\af_+^*}^\oplus \pi_\lambda\otimes {\rm id}_{\cM_{\lambda}}\,d\lambda, \int_{i\af^*_{+}}^\oplus \cH_\lambda \otimes\cM_{\lambda}\,d\lambda\right).
\end{align}
\end{theorem}

\begin{proof}
We claim that
\begin{equation}\label{Fourier Fourier}
\hat f(\lambda)= \pi_{\lambda}(f)\xi_{\lambda}^{\1}
=\int_{G/M\overline{N}}f(g\cdot z_{\empt})\pi_{\lambda}(g)\xi_{\lambda}^{\1}\,d(gM\overline{N})
    \qquad \big(f \in C_c^\infty(Z_\empt)\big).
\end{equation}
This is verified by testing against $v \in \cH_{-\lambda}^\infty$. We have
\begin{align*}
\big( \pi_{\lambda}(f)\xi_{\lambda}^{\1} , v\big)
&= \int_{G/M\overline{N}} f(g\cdot z_\empt) \big(\pi_{\lambda}(g)\xi_{\lambda}^{\1}, v\big) \,d (gM\overline{N}) \\
&=\int_{G/M\overline{N}}  f(g\cdot z_\empt) \xi_{\lambda}^{\1}\big(\pi_{-\lambda}(g^{-1})v\big)\,d(gM\overline{N})\\
&=\int_{G/M\overline{N}}  f(g\cdot z_\empt)v(g)\,d(gM\overline{N})\\
&\mathrel{\underset{(\ref{int polar})}{=}}\int_{K/M}\int_A f(ka\cdot z_\empt)v(ka)\ a^{-2\rho} \,da \,d(kM)\\
&=\int_{K/M}\int_A f(ka\cdot z_\empt)v(k) a^{\rho+\lambda} a^{-2\rho} \,da \,d(kM)\\
&\mathrel{\underset{(\ref{eq normalized right action on Z_empt})}{=}}\int_{K/M}\int_A (R(a)f)(k\cdot z_\empt)v(k) a^{\lambda} \,da \,d(kM)\\
&=\int_{K/M}\hat f (\lambda)(k) v(k) \,d(kM)
=\big( \hat f(\lambda), v\big).
\end{align*}
This proves the claim.

Since $I_{w}(\lambda)$ is unitary for every $w\in W$, we have
$$
\|\pi_{\lambda}(f)\xi_{\lambda}^{w}\|
=\|I_{w}(\lambda) \big(\pi_{\lambda}(f)\xi_{\lambda}^{w}\big)\|
=\|\pi_{w\lambda}(f) I_{w}(\lambda)(\xi_{\lambda}^{w})\|
=\|\pi_{w\lambda}(f)\xi_{w\lambda}^{\1}\|,
$$
and hence it follows from \eqref{Fourier Fourier} that
$$
\|\hat{f}(w\lambda)\|= \|\pi_{w\lambda}(f)\xi_{w\lambda}^{\1}\|= \|\pi_{\lambda}(f)\xi_\lambda^{w}\|
\qquad(w\in W).
$$
Therefore,
\begin{align*}
\int_{i\fa_{+}^{*}}\|\cF_{\empt} f(\lambda)\|^{2}\,d\lambda
&=\int_{i\fa_{+}^{*}}\sum_{w\in W}\|\pi_{\lambda}(f)\xi_\lambda^{w}\|^{2}\,d\lambda\\
&=\sum_{w\in W}\int_{i\fa_{+}^{*}}\|\hat{f}(w\lambda)\|^{2}\,d\lambda
=\int_{i\fa^{*}}\|\hat{f}(\lambda)\|^{2}\,d\lambda,
\end{align*}
and hence it follows from the decomposition  \eqref{planch empty 1} that (\ref{planch empty 2}) is a unitary isomorphism.
\end{proof}

\section{Proof of the Plancherel formula}\label{Section Proof of Plancherel formula}
The goal is now to give a sketch of the proof of the following Plancherel theorem of Harish-Chandra for the left regular representation $L$ of $G$ on $L^{2}(G/K)$.
Recall that $w_{0}$ is the longest Weyl group element in $W$. We write $\cfunc$ for the $\cfunc$-function $\cfunc_{w_{0}}$.

\begin{theorem} \label{thm planch}
For all $\lambda\in i\af^*$ the Maass-Selberg relations
$$
|\cfunc(\lambda)|
=|\cfunc (w\lambda)| \qquad (w\in W)
$$
hold.
Furthermore, for $f\in C_{c}(Z)$ and $\lambda\in i\fa^{*}_{+}$ let 
$$
\cF f(\lambda):= \pi_{\lambda}(f)\eta_{\lambda}\in \cH_\lambda.
$$
Then $\cF$ extends uniquely to a $G$-equivariant unitary isomorphism
$$
 \cF:\big(L, L^2(Z)\big)\to \left(\int_{i\af^*_{+}}^\oplus \pi_\lambda \,\frac{ d\lambda}{|\cfunc(\lambda)|^{2}},
     \int_{i\af^*_{+}}^\oplus \cH_\lambda\,\frac{ d\lambda}{|\cfunc(\lambda)|^{2}}\right).
 $$
\end{theorem}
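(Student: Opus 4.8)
\emph{Plan of proof.} The strategy is to transport the Plancherel decomposition of $L^2(Z_\empt)$ from Theorem~\ref{Thm Planch thm for Z_empt} to $L^2(Z)$ by means of the constant term approximation, which identifies matrix coefficients on $Z$ near infinity with matrix coefficients on $Z_\empt$. First I would set the stage: by the abstract Plancherel theorem \eqref{eq abstract Plancherel thm}, Corollary~\ref{cor support} and the multiplicity-one statement of Theorem~\ref{thm Gelfand} we already know that $L^2(Z)\cong\int^\oplus_{i\fa^*_+}\cH_\lambda\otimes\cM_{\pi_\lambda}\,d\mu(\lambda)$ with $\supp\mu\subseteq i\fa^*_+$, that each $\cM_{\pi_\lambda}$ is one-dimensional, and that, after trivialising $\cM_{\pi_\lambda}$ by the normalised vector $\eta_\lambda$, the abstract Fourier transform is precisely $f\mapsto(\lambda\mapsto\pi_\lambda(f)\eta_\lambda)$. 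Hence the whole theorem reduces to the Maass--Selberg relations $|\cfunc(w\lambda)|=|\cfunc(\lambda)|$ — which are exactly what is needed for the density $|\cfunc(\lambda)|^{-2}$ to descend to $i\fa^*/W=i\fa^*_+$ — together with the identification $d\mu(\lambda)=|\cfunc(\lambda)|^{-2}\,d\lambda$ (in particular $\supp\mu=i\fa^*_+$ and $\mu$ absolutely continuous). For the Maass--Selberg relations one can argue directly from the Gindikin--Karpelevich factorisation of $\cfunc=\cfunc_{w_0}$ into a product over $\Sigma^+$ of rank-one $\cfunc$-functions, using the rank-one identity $\overline{\cfunc_s(\mu)}=\cfunc_s(-\mu)$ for $\mu\in i\fa^*$ (from the $\Gamma$-function expression, or from Schur's lemma applied to the unitary $I_s(\mu)$): since $|\cfunc_s(\cdot)|$ is then insensitive to a sign change of its argument and $w^{-1}\Sigma^+$ differs from $\Sigma^+$ only by sign changes on a subset of roots, one gets $|\cfunc(w\lambda)|=|\cfunc(\lambda)|$; alternatively this drops out of the comparison below.

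The core is the constant term computation. Using the constant term approximation (Theorem~\ref{thm const}) together with the explicit principal asymptotics of Theorem~\ref{Thm Principal asymptotics}, I would show that for $\lambda\in i\fa^*$ and $v\in\cH_{-\lambda}^\infty$ the constant term of the matrix coefficient $m_{v,\eta_\lambda}$, regarded as a function on $Z_\empt$, equals $\sum_{w\in W}c_w(\lambda)\,m_{v,\xi^w_\lambda}$, where the $\xi^w_\lambda$ are the conical generalized vectors of \eqref{conic} and the scalars $c_w(\lambda)$ satisfy $|c_w(\lambda)|=|\cfunc(\lambda)|$ for every $w$ (the last point by Maass--Selberg). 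A single term is computed as follows: for $v$ supported in $\omega_0 N\overline P$, Theorem~\ref{Thm Principal asymptotics} says that the leading part of $m_{v,\eta_\lambda}$ along $\fa^{--}$ is $J_{w_0}(-\lambda)v$; combining $J_{w_0}(-\lambda)=\cfunc(-\lambda)I_{w_0}(-\lambda)$, the identity $\xi^{w_0}_\lambda=I_{w_0}(w_0\lambda)\xi^{\1}_{w_0\lambda}$ and the $\overline P$-equivariance of these vectors, one rewrites $J_{w_0}(-\lambda)v=\cfunc(-\lambda)\,m_{v,\xi^{w_0}_\lambda}$; the remaining Bruhat cells $\omega N\overline P$ are reduced to this one by a left $K$-translation and the unitary operators $I_w(\lambda)$, producing the other $|W|-1$ terms.

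The proof then concludes with a matching and averaging step. Given a spherical wave packet $F=\cF^{-1}\big((s(\lambda)\eta_\lambda)\big)$ on $Z$ with $s$ smooth of compact support in the regular set, the abstract Plancherel theorem gives $\|F\|_{L^2(Z)}^2=\int_{i\fa^*_+}|s(\lambda)|^2\,d\mu(\lambda)$. On the other hand, using the $A$-translations $R(a)$ on $Z_\empt$ from \eqref{eq normalized right action on Z_empt} and their counterparts on $Z$ to push $F$ far out towards infinity, and the quantitative error bound in Theorem~\ref{thm const}, one shows $\|F\|_{L^2(Z)}^2=\|F_\empt\|_{L^2(Z_\empt)}^2$, where $F_\empt$ is the constant term of $F$; here one uses that the volume weight ${\bf v}$ of $Z$ is asymptotically equal to that of $Z_\empt$ (cf.\ ${\bf v}(ka\cdot z_0)\asymp a^{-2\rho}$ and the exact formula \eqref{int polar}), so that the $L^2$-mass near infinity is the same on both spaces. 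Evaluating $\|F_\empt\|_{L^2(Z_\empt)}^2$ by Theorem~\ref{Thm Planch thm for Z_empt} and the constant term formula above — whereby the $|W|$ conical components contribute the factor $\sum_{w}|c_w(\lambda)|^2=|W|\,|\cfunc(\lambda)|^2$ — and equating with $\int|s|^2\,d\mu$ forces $d\mu(\lambda)$ to be a constant multiple of $|\cfunc(\lambda)|^{-2}\,d\lambda$, the constant being pinned down by the chosen normalisations of measures. This yields $\supp\mu=i\fa^*_+$ and the desired formula, and combined with the identification of $\cF$ from the first paragraph it proves the theorem.

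The hard part is the last step: making the averaging procedure rigorous requires uniform control of the error terms in Theorems~\ref{Thm Principal asymptotics} and~\ref{thm const} (so that the contribution of a neighbourhood of the origin is genuinely negligible in the limit and the cross-terms between the $|W|$ conical directions disappear), a careful treatment of the non-regular parameters $\lambda$ — a $\mu$-null set on which the conical decomposition \eqref{conic} degenerates — and a scrupulous bookkeeping of all normalisations of Haar and Lebesgue measures so that the Plancherel density comes out exactly $|\cfunc(\lambda)|^{-2}$.
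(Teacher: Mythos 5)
Your overall architecture — abstract Plancherel for $L^2(Z)$, classification of the tempered spherical support, the constant term formula $\eta_\lambda^\empt=\sum_{w\in W}\cfunc(w\lambda)\xi_\lambda^w$, and then a comparison with the explicit Plancherel formula for $L^2(Z_\empt)$ via matching and averaging — is exactly the scheme the paper follows. Paragraphs one and two are essentially in agreement with what the paper does (Corollary~\ref{Cor Formula constant term} is your constant term computation), and the suggestion to prove the Maass--Selberg relations directly from the Gindikin--Karpelevich factorisation and $\overline{\cfunc(\lambda)}=\cfunc(-\lambda)$ for $\lambda\in i\fa^*$ is a valid alternative; the paper instead extracts both Maass--Selberg and the density simultaneously from the matching argument, which is more robust (it is the argument that generalises beyond the Riemannian case, where no explicit product formula is available).

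The genuine gap is in your third paragraph. The claim ``$\|F\|_{L^2(Z)}^2=\|F_\empt\|_{L^2(Z_\empt)}^2$ for a spherical wave packet $F$, and equating this with $\int|s|^2\,d\mu$ forces $d\mu\propto|\cfunc|^{-2}\,d\lambda$'' does not unwind correctly. Writing $d\mu=\rho(\lambda)\,d\lambda$, the wave packet itself is built with $d\mu$, so its constant term is $F_\empt(ka\cdot z_\empt)=a^\rho\int s(\lambda)\sum_w\cfunc(w\lambda)a^{w\lambda}\rho(\lambda)\,d\lambda$; Parseval on $A$ then produces $\|F_\empt\|_{L^2(Z_\empt)}^2=\int_{i\fa_+^*}|s(\lambda)|^2\rho(\lambda)^2\bigl(\sum_w|\cfunc(w\lambda)|^2\bigr)\,d\lambda$, whereas $\|F\|_{L^2(Z)}^2=\int_{i\fa_+^*}|s(\lambda)|^2\rho(\lambda)\,d\lambda$. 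The density $\rho$ appears \emph{quadratically} on one side and linearly on the other, and the resulting equation would give $\rho(\lambda)=\bigl(\sum_w|\cfunc(w\lambda)|^2\bigr)^{-1}=\bigl(|W|\,|\cfunc(\lambda)|^2\bigr)^{-1}$, with an unwanted $|W|$, \emph{provided} the claimed norm equality were true; in fact the norm equality itself is off by a factor and is not something the constant term approximation alone yields (the constant term controls only the $A^{--}$-direction asymptotics, while $F_\empt$ is integrated over all of $A$ against $a^{-2\rho}\,da$, and $Z$ is integrated over $A^-$ against $\delta(a)\,da$; there is a nontrivial bookkeeping between the Weyl chamber $A^-$ and the full $A$-flat). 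Also, there is no right $A$-action on $Z=G/K$, so ``push $F$ far out'' is not well defined on $Z$ as you describe it.

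The paper sidesteps all of these issues by reversing the direction of the matching: take $f\in C_c^\infty(P\cdot z_\empt)$ on $Z_\empt$, use the $P$-equivariant diffeomorphism $\Phi:P\cdot z_\empt\to Z$ to get $\Phi(f)\in C_c(Z)$ with \emph{exactly} equal $L^2$-norm, translate $f$ to $f_{t,X}=R(\exp(-tX))f$ (which is a genuine right $A$-action on $Z_\empt$ preserving the $L^2$-norm), then apply the abstract Plancherel theorem to $\Phi(f_{t,X})$ and replace $\eta_\lambda$ by its constant term $\eta_\lambda^\empt$ up to an error that is $O(e^{-\epsilon t})$ uniformly. Averaging over a well-chosen lattice of translation parameters (Lemma~\ref{Lemma Oscillatory means}) kills the cross terms in $\bigl\|\sum_w\cfunc(w\lambda)e^{-tw\lambda(X)}\pi_\lambda(f)\xi_\lambda^w\bigr\|^2$ and yields Proposition~\ref{Prop Comparison of Planch decomps}, $\|f\|_{L^2(Z_\empt)}^2=\sum_w\int|\cfunc(w\lambda)|^2\|\cF_\empt f(\lambda)\xi_\lambda^w\|^2\,d\mu(\lambda)$, in which $\mu$ appears \emph{only once}. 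Comparison with Theorem~\ref{Thm Planch thm for Z_empt}, where the same left-hand side equals $\sum_w\int\|\cF_\empt f(\lambda)\xi_\lambda^w\|^2\,d\lambda$, immediately gives $|\cfunc(w\lambda)|^2\,d\mu(\lambda)=d\lambda$ for every $w$, hence both the Maass--Selberg relations and $d\mu=|\cfunc(\lambda)|^{-2}\,d\lambda$ with no spurious $|W|$. You should replace the wave packet computation by this matching of compactly supported functions on $Z_\empt$.
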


\subsection{Constant term approximation}
In this section we describe Harish-Chandra's theory of the constant term for matrix coefficients $m_{v,\eta_{\lambda}}$ with $\lambda\in i\fa^{*}$ and  $v\in \cH_{-\lambda}$. The constant term map is a more sophisticated version of the principal asymptotics from Theorem \ref{Thm Principal asymptotics} for unitary principal series representations. What we cite below is a special case of a general theorem in \cite{DKS}.

In view of the Theorem \ref{Thm classification of tempered reps} there exists a number $r \in \R$ such that
$$
q_{\lambda}(v):=\sup_{z\in Z} |m_{v, \eta_{\lambda}}(z)| \sqrt{{\bf v}(z)} {\bf w}(z)^r <\infty \qquad
(\lambda\in i\fa^{*},v \in \cH_{-\lambda}^\infty).
$$
For $k \in \N$ we denote by $q_{\lambda,k}$ a $k$-th Sobolev norm of $q_{\lambda}$, i.e, the norm
$$
q_{\lambda,k}(v):=\max_{u\in \mathcal{B}_{k}}q_{\lambda}(u\cdot v)\qquad (v\in \cH_{-\lambda}^{\infty}),
$$
where $\mathcal{B}_{k}$ is a basis of $\{u\in \cU(\fg)\mid \deg(u)\leq k\}$.
We recall that $(\cH_{\lambda}^{-\infty})^K= \cH_{\lambda}^K$.

\begin{theorem} \label{thm const}
There exists a family of unique linear maps
$$  (\cH_{\lambda}^{-\infty})^K \to (\cH_{\lambda}^{-\infty})^{M\overline{N}}, \quad \eta\mapsto \eta^\empt
\qquad(\lambda\in i\af^*)
$$
such that the following holds.
There exists a $k\in \N$ and for every compact subset $S\subset \af^{--}$ there exist $\e, C>0$ so that the remainder estimate
$$
\Big|m_{v,\eta}\big(\exp(tX)\big)- m_{v,\eta^\empt}\big(\exp(tX)\big)\Big|
\leq C e^{t(1+\e)\rho(X)}\|\eta\| q_{\lambda,k}(v)
$$
is valid for all $\lambda\in i\fa^{*}$, $\eta\in (\cH_{\lambda}^{-\infty})^{K}$, $v\in\cH_{-\lambda}^{\infty}$, $X\in S$ and $t\geq 0$.
\end{theorem}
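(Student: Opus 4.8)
The plan is to reduce everything to the single $K$-fixed vector $\eta_{\lambda}$. Since $(\cH_{\lambda}^{-\infty})^{K}=\C\,\eta_{\lambda}$ is one‑dimensional and $\|\eta_{\lambda}\|=1$ for our normalization of the measure on $K/M$, it suffices to (i) produce an element $\eta_{\lambda}^{\empt}\in(\cH_{\lambda}^{-\infty})^{M\overline{N}}$ for each $\lambda\in i\af^{*}$, (ii) establish the remainder estimate with $\eta=\eta_{\lambda}$, and (iii) show that $\eta_{\lambda}^{\empt}$ is the only element of $(\cH_{\lambda}^{-\infty})^{M\overline{N}}$ satisfying (ii); the general statement then follows by linearity and rescaling. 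Guided by \eqref{conic}, for regular $\lambda$ the answer will have the form $\eta_{\lambda}^{\empt}=\sum_{w\in W}c_{w}(\lambda)\,\xi_{\lambda}^{w}$ with $c_{w_{0}}(\lambda)=\cfunc(w_{0}\lambda)$; the coefficients $c_{w}$, which are essentially the $\cfunc$-functions, are holomorphic in a neighbourhood of $i\af^{*}$ because the normalized intertwiners $I_{w}$ are.

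For those $v\in\cH_{-\lambda}^{\infty}$ with $\supp(v)\subset\omega_{0}N\overline{P}$, step (ii) is already contained in Theorem \ref{Thm Principal asymptotics}: for $\lambda\in i\af^{*}$ one has $\Re(w_{0}\lambda)=0$, and since $\mu(X)<0$ on any compact subset of $\af^{--}$, the error term $Ce^{t(\Re w_{0}\lambda+\rho+\mu)(X)}q(v)$ there becomes $O\big(e^{t(1+\e)\rho(X)}q(v)\big)$ for a suitable $\e>0$. Using Lemma \ref{Lemma dual of intertwiner} one identifies $J_{w_{0}}(-\lambda)v$ with $m_{v,\cfunc(w_{0}\lambda)\xi_{\lambda}^{w_{0}}}$, while for such $v$ one has $\xi_{\lambda}^{w}(v)=0$ whenever $w\neq w_{0}$, because $\xi_{\lambda}^{w}$ is supported on the closure of a Bruhat cell strictly smaller than the open one and hence disjoint from $\supp(v)$. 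So for these $v$ the required estimate coincides with that of Theorem \ref{Thm Principal asymptotics}, provided $q(v)$ is bounded by $q_{\lambda,k}(v)$; this last comparison, between an integral over $Z_{\empt}$ and a Sobolev quantity on $Z$, will be obtained from the global temperedness bound together with a local comparison of $Z$ with its boundary degeneration.

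The substantial point is the passage to an arbitrary $v$. Once $\supp(v)$ leaves the open Bruhat cell, the conjugation on $N$ underlying the computation in Theorem \ref{Thm Principal asymptotics} is no longer contracting in all root directions, and one must invoke the genuine asymptotic machinery. Restricted to $A^{-}\cdot z_{0}\subset Z$, the function $m_{v,\eta_{\lambda}}$ is annihilated by the ideal in the algebra $\mathbb{D}(Z)\cong S(\af)^{W}$ of invariant differential operators cutting out the infinitesimal character of $\pi_{\lambda}$; this is a holonomic system with regular singularities along the walls $\{a^{\alpha}=1\}$ and at infinity. The structure theory of such systems (Harish-Chandra, Casselman--Mili\v{c}i\'c, and in the present generality \cite{DKS}) then yields a convergent expansion of $m_{v,\eta_{\lambda}}$ along rays in $\af^{--}$ whose leading exponents are precisely the translates $w\lambda-\rho$, $w\in W$; the sum of the leading terms is a matrix coefficient on $Z_{\empt}=G/M\overline{N}$, which defines $\eta_{\lambda}^{\empt}$, and the tail of the expansion supplies the bound $Ce^{t(1+\e)\rho(X)}$. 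The main obstacle---and the part I expect to absorb most of the effort, essentially the content of \cite{DKS}---is to make this expansion, and its remainder, \emph{uniform} in the parameter $\lambda\in i\af^{*}$ and to express the remainder through $q_{\lambda,k}(v)$. The plan here is to exploit the holomorphic dependence on $\lambda$ together with the a priori finiteness of $q_{\lambda,k}(v)$ furnished by Proposition \ref{Prop tempered estimate} (applied to $uv$ for all $u\in\mathcal{B}_{k}$, which is legitimate by Theorem \ref{Thm classification of tempered reps}), and to use the latter to dominate the boundary data of the differential system uniformly in $\lambda$.

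Finally, uniqueness: if $\zeta\in(\cH_{\lambda}^{-\infty})^{M\overline{N}}$ satisfies $|m_{v,\zeta}(\exp(tX))|\le C'e^{t(1+\e')\rho(X)}$ for all $v$, all $X$ in some compact subset of $\af^{--}$, and some $\e'>0$, then by the $\overline{P}$-eigenvector property of its conical components $m_{v,\zeta}(\exp(tX))=\sum_{w\in W}b_{w}(v)\,e^{t(\rho+w\lambda)(X)}$ is a finite sum of purely imaginary exponentials times $e^{t\rho(X)}$. Choosing $X\in\af^{--}$ for which the numbers $(w\lambda)(X)$ are pairwise distinct and using the linear independence of the characters $t\mapsto e^{tw\lambda(X)}$ forces $b_{w}(v)=0$ for all $w$ and all $v$, whence $m_{v,\zeta}\equiv 0$ and $\zeta=0$. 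For regular $\lambda$ the $\xi_{\lambda}^{w}$ are a basis of $(\cH_{\lambda}^{-\infty})^{M\overline{N}}$ by \eqref{conic}, so this is immediate; the non-regular case follows in the same way after replacing the exponentials by the finitely many exponential-polynomials that can occur on $A$ and appealing to a continuity argument in $\lambda$.
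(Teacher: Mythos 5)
The paper does not prove Theorem~\ref{thm const}: immediately before the statement it says ``What we cite below is a special case of a general theorem in \cite{DKS}.'' So there is no in-paper argument to compare your sketch against; your proposal is necessarily a road map for the argument of \cite{DKS}, not a reconstruction of something proved here. The downstream material (Corollary~\ref{Cor Formula constant term}) only \emph{uses} the theorem to compute $\eta_\lambda^\empt$, so it is not the missing proof either.

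With that caveat, your sketch has the right global shape (reduction to $\eta=\eta_\lambda$ using $\dim(\cH_\lambda^{-\infty})^K=1$; identification of the leading term for $v$ supported in the open Bruhat cell via Theorem~\ref{Thm Principal asymptotics}; holonomic-system asymptotics for general $v$; uniqueness by linear independence of the characters $t\mapsto e^{tw\lambda(X)}$). The genuine gap is exactly the thing you defer to \cite{DKS}: \emph{uniformity in $\lambda\in i\af^*$ of the remainder bound, at the cost of a fixed Sobolev order $k$.} Theorem~\ref{Thm Principal asymptotics} does not supply this: the Taylor constant $c$ in its proof, which bounds $|\acomp(\cdot)^{\lambda+\rho}-1|$, grows like $1+\|\lambda\|$, so the resulting $C$ is not $\lambda$-independent; and your proposed comparison $q(v)\lesssim q_{\lambda,k}(v)$ (an $L^1$-quantity over $N$ against a weighted sup over $Z$) is asserted, not proved, and must itself be uniform in $\lambda$. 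Likewise the Casselman--Mili\v{c}i\'{c} style regular-singular expansion gives asymptotics at each fixed $\lambda$, but producing $C,\e,k$ independent of $\lambda$ and absorbing the polynomial growth in $\|\lambda\|$ into $q_{\lambda,k}$ is precisely the hard analytic work, and the sketch gives no indication how this absorption is achieved. The uniqueness argument is fine (it is the same $A$-eigencharacter comparison the paper uses in the proof of Corollary~\ref{Cor Formula constant term}), though the non-regular case, where $(\cH_\lambda^{-\infty})^{M\overline N}$ may have generalized weight vectors, needs more than ``a continuity argument in $\lambda$'' to be conclusive.
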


The matrix coefficient $m_{v,\eta^\empt}$ is called the constant term of $m_{v,\eta}$. Likewise, $\eta^{\empt}$ is called the constant term of $\eta$.

\begin{cor}\label{Cor Formula constant term}
For every regular $\lambda\in i\fa^{*}$ the constant term of $\eta_{\lambda}$ is given by
$$
\eta_{\lambda}^{\empt}
=\sum_{w\in W}\cfunc(w\lambda)\xi_{\lambda}^{w},
$$
and hence the constant term of $m_{v,\eta_{\lambda}}$ is given by
$$
m_{v,\eta^{\empt}_{\lambda}}(a)
=\sum_{w\in W} \cfunc(w\lambda) \xi_\lambda^{w}(v) a^{\rho +w\lambda}
\qquad(a\in A).
$$
\end{cor}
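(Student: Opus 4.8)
The plan is as follows. Fix a regular $\lambda\in i\fa^{*}$. By \eqref{conic} the constant term $\eta_\lambda^\empt$ lies in $(\cH_\lambda^{-\infty})^{M\overline{N}}=\bigoplus_{w\in W}\C\,\xi_\lambda^w$, so I may write $\eta_\lambda^\empt=\sum_{w\in W}c_w(\lambda)\xi_\lambda^w$ and the task is to show $c_w(\lambda)=\cfunc(w\lambda)$. Since $\xi_\lambda^w\in(\cH_\lambda^{-\infty})^{\overline{P},w\lambda}$, one has $m_{v,\xi_\lambda^w}(a)=a^{\rho+w\lambda}\xi_\lambda^w(v)$ for $a\in A$ and $v\in\cH_{-\lambda}^\infty$, hence
$$
m_{v,\eta_\lambda^\empt}\big(\exp(tX)\big)=\sum_{w\in W}c_w(\lambda)\,\xi_\lambda^w(v)\,e^{t(\rho+w\lambda)(X)}\qquad(X\in\fa,\ t\in\R),
$$
a finite sum of exponentials all of modulus $e^{t\rho(X)}$ on $A$. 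I will first pin down $c_{w_0}(\lambda)$ using the principal asymptotics of Theorem \ref{Thm Principal asymptotics}, and then obtain the remaining coefficients by showing that the constant term map intertwines the normalized intertwining operators.

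For $c_{w_0}(\lambda)$: by Corollary \ref{Cor asymptotic behavior of matrix coefficients} pick $v\in\cH_{-\lambda}^\infty$ with $\supp(v)\subset\omega_0 N\overline{P}$ and $J_{w_0}(-\lambda)v(\1)\neq 0$. Using $w_0^{-1}=w_0$, the definition $\xi_\lambda^{w_0}=I_{w_0}(w_0\lambda)\xi_{w_0\lambda}^{\1}$, and \eqref{eq Extension of intertwiner to generalized vectors}, a short computation gives $\xi_\lambda^{w_0}(v)=\cfunc(w_0\lambda)^{-1}J_{w_0}(-\lambda)v(\1)\neq0$. Now compare two expansions of $m_{v,\eta_\lambda}(\exp(tX))$ for $X\in\fa^{--}$: Theorem \ref{thm const} gives it as $\sum_{w}c_w(\lambda)\xi_\lambda^w(v)e^{t(\rho+w\lambda)(X)}+O(e^{t(1+\e)\rho(X)})$, while Theorem \ref{Thm Principal asymptotics} (with $\Re(w_0\lambda)=0$ and the exponent $\mu(X)<0$ appearing there) gives it as $e^{t(\rho+w_0\lambda)(X)}J_{w_0}(-\lambda)v(\1)+O(e^{t(\rho+\mu)(X)})$; both error terms are $o(e^{t\rho(X)})$. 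Choosing $X\in\fa^{--}$ so that the values $(w\lambda)(X)$, $w\in W$, are pairwise distinct (possible because $\lambda$ is regular), dividing by $e^{t\rho(X)}$, and using that a finite sum of characters $t\mapsto e^{ict}$ with distinct frequencies which tends to $0$ as $t\to\infty$ must vanish identically, I obtain $c_{w_0}(\lambda)\xi_\lambda^{w_0}(v)=J_{w_0}(-\lambda)v(\1)$, hence $c_{w_0}(\lambda)=\cfunc(w_0\lambda)$.

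For the remaining coefficients I would first observe that, for regular $\lambda$, $\eta_\lambda^\empt$ is the \emph{unique} element of $(\cH_\lambda^{-\infty})^{M\overline{N}}$ for which $m_{v,\eta_\lambda}-m_{v,\eta_\lambda^\empt}$ obeys the remainder bound of Theorem \ref{thm const} — the difference of two candidates has matrix coefficient $\sum_w d_w\xi_\lambda^w(v)e^{t(\rho+w\lambda)(X)}$ against $v$, which is forced to vanish by the same almost-periodicity argument. Next, for any $\xi\in\cH_\lambda^{-\infty}$ and $v'\in\cH_{-w\lambda}^\infty$, Lemma \ref{Lemma dual of intertwiner} together with \eqref{eq Extension of intertwiner to generalized vectors} yields $m_{v',I_w(\lambda)\xi}=\cfunc_w(\lambda)^{-1}m_{J_{w^{-1}}(-w\lambda)v',\,\xi}$. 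Taking $\xi=\eta_\lambda$ (so $I_w(\lambda)\eta_\lambda=\eta_{w\lambda}$) and $\xi=\eta_\lambda^\empt$, and feeding $\tilde v:=J_{w^{-1}}(-w\lambda)v'\in\cH_{-\lambda}^\infty$ into Theorem \ref{thm const}, I find that $I_w(\lambda)\eta_\lambda^\empt\in(\cH_{w\lambda}^{-\infty})^{M\overline{N}}$ satisfies the remainder bound for $\eta_{w\lambda}$; by the uniqueness just noted, $I_w(\lambda)\eta_\lambda^\empt=\eta_{w\lambda}^\empt$. Since the composition law $I_{w_2}(w_1\mu)\circ I_{w_1}(\mu)=I_{w_2w_1}(\mu)$ and the definition of the $\xi$'s give $I_w(\lambda)\xi_\lambda^{w'}=\xi_{w\lambda}^{w'w^{-1}}$, comparing the two expansions of $\eta_{w\lambda}^\empt$ forces $c_{w''}(w\lambda)=c_{w''w}(\lambda)$ for all $w,w''\in W$. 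Taking $w''w=w_0$ and invoking the previous step gives $c_w(\lambda)=\cfunc(w\lambda)$ for every $w$, and the asserted formula for $m_{v,\eta_\lambda^\empt}(a)$ is then the first display with these coefficients.

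The part I expect to require the most care is the regularity bookkeeping in the last step: one must know that $J_{w^{-1}}(-w\lambda)$ is holomorphic at the regular point $\lambda\in i\fa^{*}$ — so that $\tilde v$ is a genuine smooth vector and $q_{\lambda,k}(\tilde v)<\infty$ — and that $\cfunc_w(\lambda)$ is finite and nonzero there; the nonvanishing is the recorded consequence of the Gindikin--Karpelevich formula, and the absence of poles on the regular locus of $i\fa^{*}$ follows from the explicit $\Gamma$-function expression for $\cfunc_w$ together with the holomorphy of $I_w$ near $i\fa^{*}$ noted in Section \ref{Section Intertwiners and Asymptotics}. Everything else reduces to routine computation.
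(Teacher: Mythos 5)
Your proposal is correct and follows essentially the same overall strategy as the paper: (1) expand $\eta_\lambda^\empt$ in the conical basis via \eqref{conic}, (2) determine the $w_0$-coefficient by comparing the constant term approximation of Theorem \ref{thm const} with the principal asymptotics of Theorem \ref{Thm Principal asymptotics} on vectors supported in the big Bruhat cell, and (3) obtain the remaining coefficients from the equivariance relation $I_w(\lambda)\eta_\lambda^\empt = \eta_{w\lambda}^\empt$ together with $I_w(\lambda)\xi_\lambda^{w'} = \xi_{w\lambda}^{w'w^{-1}}$. Where you depart from the paper is in the level of detail: you make explicit the almost-periodicity argument (choose generic $X\in\fa^{--}$, divide by $e^{t\rho(X)}$, separate distinct frequencies $w\lambda(X)$) that the paper leaves implicit in the equality $\eta_\lambda^\empt(v) = \big(J_{w_0}(-\lambda)v\big)(\1)$ and in the subsequent $A$-weight comparison, and you prove the pointwise uniqueness of the constant term at a fixed regular $\lambda$ by this same device rather than invoking the family-uniqueness built into Theorem \ref{thm const}. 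You also derive the equivariance relation $I_w(\lambda)\eta_\lambda^\empt = \eta_{w\lambda}^\empt$ by transporting matrix coefficients through Lemma \ref{Lemma dual of intertwiner}, whereas the paper asserts it directly from ``intertwining $+$ continuous $+$ uniqueness.'' Your route is slightly longer but more self-contained, and is a valid alternative for that step. The coefficient bookkeeping $c_{w''}(w\lambda) = c_{w''w}(\lambda)$ and the specialization to $w_0$ are also handled correctly.
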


\begin{proof}
Let $\lambda\in i\af^*$ be regular. In view of \eqref{conic} there exist unique coefficients $\gamma_{w}(\lambda)\in \C$ for $w\in W$, so that
$$
\eta_\lambda^\empt = \sum_{w\in W} \gamma_{w}(\lambda) \xi_\lambda^{w}.
$$
It now remains to show that $\gamma_{w}(\lambda)=\cfunc(w\lambda)$.

Let $v\in \cH_{-\lambda}^{\infty}$ satisfy $\supp(v)\subset \omega_{0}N\overline{P}$. It follows from Theorem \ref{Thm Principal asymptotics} on principal asymptotics that
$$
\eta_{\lambda}^{\empt}(v)
=\big(J_{w_{0}}(-\lambda)v\big)(\1).
$$
In view of (\ref{eq Def xi^1}) we thus have 
$$
\eta_{\lambda}^{\empt}(v)
=\xi_{w_{0} \lambda}^{\1}\big(J_{w_{0}}(-\lambda)v\big).
$$
We now use (\ref{eq Extension of intertwiner to generalized vectors}), (\ref{eq Def I}) and (\ref{eq Def xi^w}) to rewrite the right-hand side and obtain
$$
\eta_{\lambda}^{\empt}(v)
=\big(J_{w_{0}}(w_{0}\lambda)\xi_{w_{0}\lambda}^{\1}\big)(v)\\
=\cfunc(w_{0} \lambda)\big(I_{w_{0}}(w_{0}\lambda)\xi_{w_{0}\lambda}^{\1}\big)(v)
=\cfunc(w_{0} \lambda)\xi_{\lambda}^{w_{0}}(v).
$$
Now for every $a\in A$ we have
\begin{align*}
\big(\pi_{\lambda}(a)\eta_{\lambda}^{\empt}\big)(v)
&=\eta_{\lambda}^{\empt}\big(\pi_{-\lambda}(a^{-1})v\big)
=\cfunc(w_{0} \lambda)\xi_{\lambda}^{w_{0}}\big(\pi_{-\lambda}(a^{-1})v\big)\\
&=\cfunc(w_{0} \lambda)\big(\pi_{\lambda}(a)\xi_{\lambda}^{w_{0}}\big)(v)
=a^{\rho+w_{0}\lambda}\cfunc(w_{0} \lambda)\xi_{\lambda}^{w_{0}}(v).
\end{align*}
Since $\lambda$ is regular, the group $A$ acts on the $\xi_{\lambda}^{w}$ with distinct eigencharacters. Therefore,
$$
\gamma_{w}(\lambda)\xi_{\lambda}^{w}(v)
=\left\{
   \begin{array}{ll}
     0 & (w\in W\setminus\{w_{0}\}), \\
     \cfunc(w_{0} \lambda)\xi_{\lambda}^{w_{0}}(v) & (w=w_{0}).
   \end{array}
 \right.
$$
By choosing $v$ so that $\xi_{\lambda}^{w_{0}}(v)\neq 0$ we find that $\gamma_{w_{0}}(\lambda)=\cfunc(w_{0}\lambda)$.

Since $I_{w}(\lambda)$ is intertwining and continuous, the uniqueness of the constant term map stated in Theorem \ref{thm const} implies that
$$
\eta_{w\lambda}^{\empt}
=\big(I_{w}(\lambda)\eta_{\lambda}\big)^{\empt}
=I_{w}(\lambda)\eta_{\lambda}^{\empt}
\qquad(\lambda\in i\fa^{*}, w\in W).
$$
Moreover, for all $w, w'\in W$ and $\lambda\in i\fa^{*}$ we have
\begin{align*}
I_{w}(\lambda)\xi_\lambda^{w'}
&=  I_{w}(\lambda)I_{w'^{-1}}(w'\lambda)\xi_{w'\lambda}^{\1}
=  I_{ww'^{-1}}(w'\lambda)\xi_{w'\lambda}^{\1}\\
&=  I_{ww'^{-1}}\big((w'w^{-1})w\lambda\big)\xi_{(w'w^{-1})w\lambda}^{\1}
=\xi_{w\lambda}^{w'w^{-1}},
\end{align*}
and hence
\begin{align*}
 \sum_{w'\in W} \gamma_{w'}(\lambda) \xi_{\lambda}^{w'}
&=\eta_{\lambda}^{\empt}
=I_{w}(w^{-1}\lambda)\eta_{w^{-1}\lambda}^{\empt}
=\sum_{w''\in W}\gamma_{w''}(w^{-1}\lambda)I_{w}(w^{-1}\lambda) \xi_{w^{-1}\lambda}^{w''}\\
&=\sum_{w''\in W}\gamma_{w''}(w^{-1}\lambda)\xi_{\lambda}^{w''w^{-1}}.
\end{align*}
From the linear independence of the $\xi_{\lambda}^{w}$ it follows that
$$
\gamma_{w}(\lambda)=\gamma_{w_{0}}(w_{0}^{-1}w\lambda)=\cfunc(w\lambda)
\qquad (w\in W).
$$
\end{proof}

\subsection{Matching of $L^2(Z)$ and $L^2(Z_\empt)$}
Let $z_{0}:=K\in G/K=Z$ and $z_{\empt}:=M\overline{N}\in G/M\overline{N}=Z_{\empt}$.
Note that both $P/M\simeq P\cdot z_{0}=Z$ and
$P/M\simeq P\cdot z_\empt$ as $P$-spaces and that $P\cdot z_\empt$ is open and dense in $Z_{\empt}$.
Because of (\ref{eq Integral formula Iwasawa decomp}) and (\ref{eq Integral formula Bruhat decomp}), the $P$-equivariant matching map
$$
\Phi: C_c(P\cdot z_\empt)\to C_c(Z),
$$
defined by
$$
\Phi(f)(p\cdot z_0):=  f(p\cdot z_\empt) \qquad \big(f\in C_{c}(P\cdot z_{\empt}), p\in P),
$$
extends to a $P$-equivariant unitary equivalence
$$L^2(Z_\empt)\to L^2(Z).$$
We recall the normalized right unitary action $R$ of $A$ on $L^2(Z_\empt)$ from (\ref{eq normalized right action on Z_empt}).
Now for $X\in \af^{--}$, $f\in C_c(P\cdot z_\empt)$ and $t\geq 0$
we set
$$
f_{t,X} := R\big(\exp(-tX)\big)f\in C_c(P\cdot z_\empt).
$$
We note that
$$
\langle f,g\rangle_{L^2(Z_\empt)}= \langle f_{t,X},g_{t,X}\rangle_{L^2(Z_\empt)}=\langle \Phi(f_{t,X}),\Phi(g_{t,X})\rangle_{L^2(Z)}
\qquad \big(f,g\in C_{c}(P\cdot z_{\empt})\big).
$$
In view of Corollary \ref{cor support} the support of the Plancherel measure $\mu$ of $L^{2}(Z)$ is contained in $i\af^*/W$.
By the abstract Plancherel theorem \eqref{eq abstract Plancherel thm} for $L^2(Z)$ we thus have
$$
\langle f,g\rangle_{L^2(Z_\empt)}=
\int_{i\af_+^*}\langle \cF \Phi(f_{t,X})(\lambda),\cF\Phi(g_{t,X})(\lambda)\rangle \,d\mu(\lambda)
\qquad (X\in\fa^{--}, t\geq 0)
$$
for all $f,g\in C_{c}(P\cdot z_{\empt})$.

\begin{lemma}\label{Lemma Z_empt Plancherel comparison}
Let $f\in C_c^{\infty}(P\cdot z_\empt)$ and let $S$ be a compact subset of  $\fa^{--}$. Then there exist constants $C>0$ and $\epsilon>0$ so that
\begin{equation}\label{eq difference of norms squared}
\bigg|
    \|f\|_{L^2(Z_\empt)}^2
    -\int_{i\af_+^*} \Big\|\sum_{w\in W} \cfunc(w\lambda)e^{-t w\lambda(X)} \, \pi_{\lambda}(f)\xi_\lambda^{w}\Big\|^{2}\,d\mu(\lambda)\bigg|
\leq C e^{-\e t}
\end{equation}
for all $X\in S$ and $t\geq0$.
\end{lemma}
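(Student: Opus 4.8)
The plan is to combine the matching identity preceding the Lemma with the constant term approximation. By that identity and the abstract Plancherel theorem for $Z$, for every $t\ge 0$ and $X\in S$ we have
\[
\|f\|^2_{L^2(Z_\empt)}=\int_{i\af_+^*}\|\pi_\lambda(\Phi(f_{t,X}))\eta_\lambda\|^2\,d\mu(\lambda),
\]
because $\cF\Phi(f_{t,X})(\lambda)=\pi_\lambda(\Phi(f_{t,X}))\eta_\lambda$ and $\|\eta_\lambda\|=1$. Writing $\mathrm{main}_t(\lambda):=\sum_{w\in W}\cfunc(w\lambda)e^{-tw\lambda(X)}\pi_\lambda(f)\xi_\lambda^{w}$ and $\mathrm{err}_t(\lambda):=\pi_\lambda(\Phi(f_{t,X}))\eta_\lambda-\mathrm{main}_t(\lambda)$, it therefore suffices to prove the $L^2$-estimate
\[
\int_{i\af_+^*}\big\|\mathrm{err}_t(\lambda)\big\|_{\cH_\lambda}^2\,d\mu(\lambda)\le Ce^{-\e t}.
\]
Indeed, the Lemma then follows by Cauchy--Schwarz: since $\|\pi_\lambda(\Phi(f_{t,X}))\eta_\lambda\|_{L^2(d\mu)}=\|f\|_{L^2(Z_\empt)}$ is bounded, so is $\|\mathrm{main}_t\|_{L^2(d\mu)}$, and both bound $\int 2\Re\langle\mathrm{main}_t,\mathrm{err}_t\rangle\,d\mu$ and $\int\|\mathrm{err}_t\|^2\,d\mu$.

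To identify $\pi_\lambda(\Phi(f_{t,X}))\eta_\lambda$ I would test against $v\in\cH_{-\lambda}^{\infty}$; by $G$-invariance of the pairing and right $K$-invariance of $m_{v,\eta_\lambda}$ this gives $\langle\pi_\lambda(\Phi(f_{t,X}))\eta_\lambda,v\rangle=\int_Z\Phi(f_{t,X})(z)\,m_{v,\eta_\lambda}(z)\,dz$. The support of $\Phi(f_{t,X})$ lies in the open $P$-orbit, and from $\Phi(f_{t,X})(p\cdot z_0)=e^{t\rho(X)}f(p\exp(-tX)\cdot z_\empt)$ it is contained in $(\supp f)\exp(tX)$; substituting $p=q\exp(tX)$ with $q$ ranging over the fixed compact set $\supp f\subset P/M$, and using that right translation by $\exp(tX)$ rescales the invariant measure on the open orbit by $e^{2t\rho(X)}$, one obtains
\[
\langle\pi_\lambda(\Phi(f_{t,X}))\eta_\lambda,v\rangle=e^{-t\rho(X)}\int_{P/M}f(q\cdot z_\empt)\,m_{\pi_{-\lambda}(q^{-1})v,\,\eta_\lambda}\big(\exp(tX)\big)\,d\mu(qM).
\]
Now Theorem \ref{thm const} replaces $m_{\pi_{-\lambda}(q^{-1})v,\eta_\lambda}(\exp(tX))$ by $m_{\pi_{-\lambda}(q^{-1})v,\eta_\lambda^{\empt}}(\exp(tX))$ up to $Ce^{t(1+\e)\rho(X)}q_{\lambda,k}(\pi_{-\lambda}(q^{-1})v)$, the constants $C,\e,k$ being uniform in $\lambda\in i\af^*$; since $\mathbf v$ and $\mathbf w$ change by bounded factors under translation by the compact set $\supp f$, one has $\sup_{q\in\supp f}q_{\lambda,k}(\pi_{-\lambda}(q^{-1})v)\le C'q_{\lambda,k}(v)$, still uniformly in $\lambda$. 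Inserting the formula $m_{v',\eta_\lambda^{\empt}}(a)=\sum_w\cfunc(w\lambda)\xi_\lambda^{w}(v')a^{\rho+w\lambda}$ of Corollary \ref{Cor Formula constant term}, together with the identity $\int_{P/M}f(q\cdot z_\empt)\,m_{v,\xi_\lambda^{w}}(q)\,d\mu(qM)=\langle\pi_\lambda(f)\xi_\lambda^{w},v\rangle$ (proved exactly as the analogous statement in the proof of Theorem \ref{Thm Planch thm for Z_empt}), the powers of $e^{t\rho(X)}$ cancel and we arrive at $\pi_\lambda(\Phi(f_{t,X}))\eta_\lambda=\mathrm{main}_t(\lambda)+\mathrm{err}_t(\lambda)$ with
\[
|\langle\mathrm{err}_t(\lambda),v\rangle|\le C''e^{-\e_0 t}\,q_{\lambda,k}(v)\qquad(v\in\cH_{-\lambda}^{\infty}),
\]
uniformly in $\lambda\in i\af^*$, $X\in S$ and $t\ge0$ (here $\e_0>0$ since $|\rho(X)|$ is bounded below on $S\subset\af^{--}$). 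One may also recognise $\mathrm{main}_t(\lambda)$ as $\pi_\lambda(f_{t,X})\eta_\lambda^{\empt}$, via the right $A$-equivariance of $\pi_\lambda(\,\cdot\,)\xi_\lambda^{w}$, so the Lemma expresses the asymptotic agreement of $\cF_Z\circ\Phi$ with the Fourier transform of $Z_\empt$ on functions pushed to infinity.

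The main obstacle is the final step: passing from the pointwise bound on $\mathrm{err}_t(\lambda)$, which only controls it in a negative Sobolev norm, to the $L^2(d\mu)$-bound required in the first paragraph. This is precisely where the uniformity in $\lambda$ of Theorem \ref{thm const} is indispensable: combined with a Sobolev-type continuity of the spherical Plancherel transform and the ensuing polynomial control on the growth of the Plancherel measure $\mu$ along $i\af_+^*$ --- which is what Bernstein's temperedness estimate (Proposition \ref{Prop tempered estimate}) supplies --- it lets one trade the finite loss of derivatives against the exponential factor $e^{-\e_0 t}$, yielding $\int_{i\af_+^*}\|\mathrm{err}_t(\lambda)\|_{\cH_\lambda}^2\,d\mu(\lambda)\le Ce^{-\e t}$. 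Granting this, the remaining verifications --- the substitution, the measure rescaling, and the two Cauchy--Schwarz estimates --- are routine.
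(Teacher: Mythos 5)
The paper does not actually give a full proof of Lemma~\ref{Lemma Z_empt Plancherel comparison}: it states that the result is an explicated version of \cite[Corollary 8.3]{DKKS}, declares the proof ``rather involved'', and then offers only heuristics. Your proposal is at a comparable level: a correct reduction to the core analytic estimate, followed by an honest acknowledgement that the remaining step is exactly the hard part.

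Within that framework, your route is a mild variant of the paper's. The paper applies the constant term approximation (Theorem~\ref{thm const}) \emph{twice}, once on each slot of the sesquilinear pairing $\langle\cF\Phi(f_{t,X})(\lambda),\cF\Phi(g_{t,X})(\lambda)\rangle$, turning the $\lambda$-fiberwise inner product into the quantity $e^{-t\rho(X)}\langle\pi_\lambda(f)\pi_\lambda(a_t)\eta_\lambda^\empt,\pi_\lambda(g)\pi_\lambda(a_t)\eta_\lambda^\empt\rangle$ and then invoking Corollary~\ref{Cor Formula constant term}. You instead test $\pi_\lambda(\Phi(f_{t,X}))\eta_\lambda$ against a varying $v\in\cH_{-\lambda}^\infty$ and apply the constant term approximation once. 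Your intermediate formula
\[
\langle\pi_\lambda(\Phi(f_{t,X}))\eta_\lambda,v\rangle
= e^{-t\rho(X)}\int_{P/M}f(q\cdot z_\empt)\,
m_{\pi_{-\lambda}(q^{-1})v,\eta_\lambda}\big(\exp(tX)\big)\,d(qM)
\]
is correct (the Jacobian bookkeeping works out: $e^{t\rho(X)}$ from the normalization, $e^{-2t\rho(X)}$ from the measure substitution). The advantage of this variant is that the constant term theorem is applied to $\pi_{-\lambda}(q^{-1})v$ with $q$ in a fixed compact set, whereas the paper applies it to a $t$-dependent family $v_{-\lambda,t}$ whose Sobolev size must be controlled uniformly in $t$; the disadvantage, which you correctly identify, is that testing against $v$ only yields control of $\mathrm{err}_t(\lambda)$ in a negative-order Sobolev norm, and one must still convert this to the Hilbert-space norm $\|\mathrm{err}_t(\lambda)\|_{\cH_\lambda}$ in a way that remains integrable against $d\mu(\lambda)$. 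That conversion — balancing the uniform-in-$\lambda$ loss of derivatives in Theorem~\ref{thm const} against the polynomial growth of $q_{\lambda,k}$ and of the Plancherel density — is the genuinely difficult content of \cite[Corollary 8.3]{DKKS}; it is not filled in here, nor in the paper. Your Cauchy--Schwarz reduction at the start is fine, as is the observation that $\|\mathrm{main}_t\|_{L^2(d\mu)}$ is then automatically bounded.

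One minor caveat: in identifying $\mathrm{main}_t(\lambda)$ with $\pi_\lambda(f_{t,X})\eta_\lambda^\empt$, the exponential coming out of the $A$-equivariance is $e^{+tw\lambda(X)}$, not $e^{-tw\lambda(X)}$. Since $\lambda\in i\fa^*$, this is a modulus-one factor and does not affect the norm that the Lemma estimates, but it is worth being aware that the sign convention in the displayed formula requires a closer look if one wants the vector identity rather than the norm identity.
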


The lemma is an explicated version of \cite[Corollary 8.3]{DKKS}. The proof for this result is rather involved; in the remainder of this section we will give some heuristics for the assertion.

Let $f,g\in C_{c}^{\infty}(Z_{\empt})$ and let $X\in \fa^{--}$.
For $t\geq 0$ we set $a_{t}=\exp(tX)$.  Then
\begin{align*}
&\Big\langle \cF \Phi(f_{t,X})(\lambda),\cF\Phi(g_{t,X})(\lambda)\Big\rangle
=\Big\langle \pi_{\lambda}\big(\Phi(f_{t,X})\big)\eta_{\lambda},\pi_{\lambda}\big(\Phi(g_{t,X})\big)\eta_{\lambda}\Big\rangle\\
&=e^{t\rho(X)}\int_{A}\int_{N}\int_{A}\int_{N} f(a_{1}n_{1}a_{t}^{-1})\overline{g_{t,X}}(a_{2}n_{2})
    \Big\langle \pi_{\lambda}(a_{1}n_{1})\eta_{\lambda},\pi_{\lambda}(a_{2}n_{2})\eta_{\lambda}\Big\rangle\,dn_{2}\,da_{2}\,dn_{1}\,da_{1}\\
&=e^{-t\rho(X)}\int_{A}\int_{N}\int_{A}\int_{N} f(a_{1}n_{1})\overline{g_{t,X}}(a_{2}n_{2})
    \Big\langle \pi_{\lambda}(a_{1}n_{1}a_{t})\eta_{\lambda},\pi_{\lambda}(a_{2}n_{2})\eta_{\lambda}\Big\rangle\,dn_{2}\,da_{2}\,dn_{1}\,da_{1}\\
&=e^{-t\rho(X)}\int_{A}\int_{N}\int_{A}\int_{N} f(a_{1}n_{1})\overline{g_{t,X}}(a_{1}n_{1}a_{2}n_{2})
    \Big\langle\pi_{\lambda}(a_{t})\eta_{\lambda},\pi_{\lambda}(a_{2}n_{2})\eta_{\lambda}\Big\rangle\,dn_{2}\,da_{2}\,dn_{1}\,da_{1}\\
&=e^{-t\rho(X)}\int_{A}\int_{N}\int_{A}\int_{N} f(a_{1}n_{1})\overline{g_{t,X}}(a_{1}n_{1}a_{2}n_{2})
    \Big(\pi_{\lambda}(a_{t})\eta_{\lambda},\pi_{-\lambda}(a_{2}n_{2})\eta_{-\lambda}\Big)\,dn_{2}\,da_{2}\,dn_{1}\,da_{1}\\
&=e^{-t\rho(X)}m_{v_{-\lambda,t},\eta_{\lambda}}(a_{t}),
\end{align*}
where
$$
v_{-\lambda,t}
=\int_{A}\int_{N}\int_{A}\int_{N} f(a_{1}n_{1})\overline{g_{t,X}}(a_{1}n_{1}a_{2}n_{2})
   \pi_{-\lambda}(a_{2}n_{2})\eta_{-\lambda}\,dn_{2}\,da_{2}\,dn_{1}\,da_{1}
\in\cH_{-\lambda}^{\infty}.
$$
Now using the constant term approximation from Theorem \ref{thm const} one can prove an estimate
$$
e^{-t\rho(X)}\Big|m_{v_{-\lambda,t},\eta_{\lambda}}(a_{t})- m_{v_{-\lambda,t},\eta_{\lambda}^\empt}(a_{t})\Big|
\leq Ce^{-\epsilon t}
$$
for a constant $C$ independent of $t$.
Now
\begin{align*}
&m_{v_{-\lambda,t},\eta_{\lambda}^\empt}(a_{t})\\
&=e^{t\rho(X)}\int_{A}\int_{N}\int_{A}\int_{N} f(a_{1}n_{1})\overline{g}(a_{1}n_{1}a_{2}n_{2}a_{t}^{-1})
    \Big(\pi_{\lambda}(a_{t})\eta_{\lambda}^{\empt},\pi_{-\lambda}(a_{2}n_{2})\eta_{-\lambda}\Big)\,dn_{2}\,da_{2}\,dn_{1}\,da_{1}\\
&=e^{-t\rho(X)}\int_{A}\int_{N}\int_{A}\int_{N} f(a_{1}n_{1})\overline{g}(a_{1}n_{1}a_{2}n_{2})
    \Big(\pi_{\lambda}(a_{t})\eta_{\lambda}^{\empt},\pi_{-\lambda}(a_{2}n_{2}a_{t})\eta_{-\lambda}\Big)\,dn_{2}\,da_{2}\,dn_{1}\,da_{1}\\
&=e^{-t\rho(X)}\int_{A}\int_{N}\int_{A}\int_{N} f(a_{1}n_{1})\overline{g}(a_{1}n_{1}a_{2}n_{2})
    \Big(\pi_{\lambda}(n_{2}^{-1}a_{2}^{-1}a_{t})\eta_{\lambda}^{\empt},\pi_{-\lambda}(a_{t})\eta_{-\lambda}\Big)\,dn_{2}\,da_{2}\,dn_{1}\,da_{1}\\
&=m_{w_{\lambda,t},\eta_{-\lambda}}(a_{t}),
\end{align*}
where
$$
w_{\lambda,t}
=e^{-t\rho(X)}\int_{A}\int_{N}\int_{A}\int_{N} f(a_{1}n_{1})\overline{g}(a_{1}n_{1}a_{2}n_{2})
   \pi_{\lambda}(n_{2}^{-1}a_{2}^{-1}a_{t})\eta_{\lambda}^{\empt}\,dn_{2}\,da_{2}\,dn_{1}\,da_{1}\in\cH_{\lambda}^{\infty}.
$$
Again the constant term approximation of Theorem \ref{thm const} may be used to prove an estimate
$$
e^{-t\rho(X)}\Big|m_{w_{\lambda,t},\eta_{-\lambda}}(a_{t})- m_{w_{\lambda,t},\eta_{-\lambda}^\empt}(a_{t})\Big|\leq Ce^{-\epsilon t}
$$
for some constant $C$ independent of $t$.
Note that
\begin{align*}
m_{w_{\lambda,t},\eta_{-\lambda}^\empt}(a_{t})
&=e^{-t\rho(X)}\Big(
\int_{A}\int_{N}\overline{g}(a_{2}n_{2})\pi_{\lambda}(a_{2}n_{2}a_{t})\eta_{\lambda}^{\empt},
\int_{A}\int_{N}f(a_{1}n_{1})\pi_{-\lambda}(a_{1}n_{1}a_{t})\eta_{-\lambda}^{\empt}
\Big)\\
&=e^{-t\rho(X)}\Big\langle \pi_{\lambda}(f)\pi_{\lambda}(a_{t})\eta_{\lambda}^{\empt},\pi_{\lambda}(g)\pi_{\lambda}(a_{t})\eta_{\lambda}^{\empt}\Big\rangle.
\end{align*}
In view of Corollary \ref{Cor Formula constant term} we have
\begin{align*}
e^{-t\rho(X)} \pi_{\lambda}(f)\pi_{\lambda}(a_{t})\eta_{\lambda}^{\empt}
=\sum_{w\in W} \cfunc(w\lambda)e^{-t w\lambda(X)} \, \pi_{\lambda}(f)\xi_\lambda^{w}.
\end{align*}
By taking $f=g$ we thus get the desired estimate for the difference
$$
\|\cF \Phi(f_{t,X})(\lambda)\|^{2}-\|\sum_{w\in W} \cfunc(w\lambda)e^{-t w\lambda(X)} \, \pi_{\lambda}(f)\xi_\lambda^{w}\|^{2}.
$$

\subsection{Averaging}
Recall that $\mu$ denotes the Plancherel measure for $L^{2}(Z)$ in (\ref{eq abstract Plancherel thm}).
The aim of this section is to prove the following proposition.

\begin{prop}\label{Prop Comparison of Planch decomps}
Let $f\in C_{c}^{\infty}(P\cdot z_{\empt})$. Then
$$
\|f\|_{L^2(Z_\empt)}^2
= \sum_{w\in W}\int_{i\af_+^*} |\cfunc(w\lambda)|^2 \|\cF_{\empt}f(\lambda)\xi_\lambda^{w}\|^2  \,d\mu(\lambda).
$$
\end{prop}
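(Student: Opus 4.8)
The plan is to obtain the identity by passing to the limit $t\to\infty$ in Lemma~\ref{Lemma Z_empt Plancherel comparison}; the real work is to kill the oscillatory cross terms that appear once the square in that lemma is expanded. Fix a point $X\in\fa^{--}$ (a singleton is a compact subset of $\fa^{--}$) and write
$$
g_t(\lambda):=\sum_{w\in W}\cfunc(w\lambda)\,e^{-tw\lambda(X)}\,\pi_\lambda(f)\xi_\lambda^{w}\in\cH_\lambda,\qquad
\Psi(\lambda):=\sum_{w\in W}|\cfunc(w\lambda)|^{2}\,\|\pi_\lambda(f)\xi_\lambda^{w}\|^{2}.
$$
Lemma~\ref{Lemma Z_empt Plancherel comparison} furnishes constants $C,\e>0$ with $\big|\,\|f\|_{L^{2}(Z_\empt)}^{2}-\int_{i\af_+^*}\|g_t(\lambda)\|^{2}\,d\mu(\lambda)\big|\le Ce^{-\e t}$ for all $t\ge0$. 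Averaging over $t\in[0,T]$ and exchanging the $t$-average with the $\mu$-integral (Tonelli) gives
$$
\int_{i\af_+^*}\Big(\frac1T\int_{0}^{T}\|g_t(\lambda)\|^{2}\,dt\Big)\,d\mu(\lambda)\ \longrightarrow\ \|f\|_{L^{2}(Z_\empt)}^{2}\qquad(T\to\infty).
$$

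Next I would analyse the inner integrand $G_T(\lambda):=\frac1T\int_{0}^{T}\|g_t(\lambda)\|^{2}\,dt$ pointwise. Expanding $\|g_t(\lambda)\|^{2}$ and using $|e^{-tw\lambda(X)}|=1$ for $\lambda\in i\af^{*}$, the diagonal terms $w=w'$ contribute the $t$-independent quantity $\Psi(\lambda)$, while each off-diagonal pair $w\ne w'$ contributes a bounded coefficient times $\frac1T\int_{0}^{T}e^{-t(w\lambda-w'\lambda)(X)}\,dt$. For regular $\lambda$ the functional $w\lambda-w'\lambda$ is non-zero, so provided $X$ avoids the finitely many hyperplanes that annihilate some $w\lambda-w'\lambda$, each such factor tends to $0$; hence $G_T(\lambda)\to\Psi(\lambda)$ for all $\lambda$ outside a Lebesgue-null set, which is also $\mu$-null. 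Moreover, the triangle inequality and Cauchy--Schwarz give $\|g_t(\lambda)\|\le\sum_{w}|\cfunc(w\lambda)|\,\|\pi_\lambda(f)\xi_\lambda^{w}\|$, whence $0\le G_T(\lambda)\le|W|\,\Psi(\lambda)$ for every $T$.

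Now Fatou's lemma applied to $G_T$ gives $\int_{i\af_+^*}\Psi\,d\mu\le\|f\|_{L^{2}(Z_\empt)}^{2}<\infty$, so $\Psi\in L^{1}(\mu)$; with the integrable majorant $|W|\Psi$ available, dominated convergence then yields $\int_{i\af_+^*}G_T\,d\mu\to\int_{i\af_+^*}\Psi\,d\mu$, and comparison with the displayed limit forces $\int_{i\af_+^*}\Psi\,d\mu=\|f\|_{L^{2}(Z_\empt)}^{2}$. Interchanging the finite sum with the integral and using $\cF_\empt f(\lambda)\xi_\lambda^{w}=\pi_\lambda(f)\xi_\lambda^{w}$ from Theorem~\ref{Thm Planch thm for Z_empt}, this reads
$$
\|f\|_{L^{2}(Z_\empt)}^{2}=\sum_{w\in W}\int_{i\af_+^*}|\cfunc(w\lambda)|^{2}\,\|\cF_\empt f(\lambda)\xi_\lambda^{w}\|^{2}\,d\mu(\lambda),
$$
which is the assertion.

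The hard part is the a priori integrability of the diagonal sum $\Psi$: the cross terms in $\|g_t\|^{2}$ only oscillate and do not decay, so one cannot send $t\to\infty$ directly --- it is the averaging in $t$ together with Fatou's lemma that justifies first isolating the diagonal and then moving the limit inside the $\mu$-integral. The remaining technical point to pin down is that the exceptional $\lambda$-set (the non-regular parameters, plus those where the chosen $X$ annihilates some $w\lambda-w'\lambda$) is $\mu$-negligible; this uses that $\mu$ charges no hyperplane in $i\af_+^*$ and the Borel measurability of the sections involved, both of which are part of the framework already in place.
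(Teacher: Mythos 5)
Your $t$-averaging with a single fixed $X\in\fa^{--}$ captures the right overall structure (isolate the diagonal via ergodic averaging, then bootstrap from Fatou to dominated convergence), but there is a genuine gap at the point where you discard the exceptional $\lambda$-set. For a fixed regular $X$, the cross term $\frac{1}{T}\int_0^T e^{-t(w\lambda-w'\lambda)(X)}\,dt$ fails to tend to zero exactly when $(w\lambda-w'\lambda)(X)=0$; the set of such $\lambda$ is a finite union of hyperplanes, which is Lebesgue-null, but you then declare it $\mu$-null by appealing to the claim that ``$\mu$ charges no hyperplane in $i\af_+^*$'' as if it were already available. It is not: the only information about $\mu$ established before this point is that $\supp\mu\subseteq i\af^*/W$ (Corollary~\ref{cor support}). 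Absolute continuity of $\mu$ with respect to Lebesgue measure is part of the content of Theorem~\ref{thm planch}, which is precisely what one is in the course of proving, so invoking it here is circular. If $\mu$ assigned positive mass to one of those $X$-dependent hyperplanes, both your Fatou step (which then only yields $\int\liminf_T G_T\,d\mu\le\|f\|^2$, with $\liminf_T G_T$ possibly strictly larger than $\Psi$ on the hyperplane) and your dominated-convergence passage (which needs $G_T\to\Psi$ $\mu$-a.e., not merely Lebesgue-a.e.) would fail.

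The paper avoids this circularity by a more elaborate averaging together with the dichotomy in Lemma~\ref{Lemma Oscillatory means}. Averaging over the sets $S_n$, which run over a full basis $X_1,\dots,X_m$ of $\fa$ \emph{and} the irrational multiples $X_{j+m}=cX_j$, makes the cross terms vanish in the limit for \emph{every} $\lambda$ at which $\eta_\lambda^\empt$ decomposes into $A$-eigenvectors with distinct eigencharacters, so there are no $X$-dependent exceptional hyperplanes. Crucially, part (ii) of Lemma~\ref{Lemma Oscillatory means} then shows that $\cQ_\lambda(f)=\infty$ whenever the eigenvector decomposition fails, provided $\pi_\lambda(f)$ is injective on $(\cH_\lambda^{-\infty})^{M\overline{N}}$. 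Combining this with the Fatou bound $\int\cQ_\lambda(f)\,d\mu\le\|f\|^2<\infty$ and a local choice of $f$ making $\pi_{i\lambda}(f)$ injective near each $\lambda_0$ forces the bad set to have $\mu$-measure zero \emph{as a consequence} of the finiteness, with no prior regularity input on $\mu$. To repair your argument you would need either to reinstate multi-directional averaging of this kind (together with the divergence argument of Lemma~\ref{Lemma Oscillatory means}(ii) to dispose of the locus where the eigenvector decomposition fails), or to supply an independent proof that the $X$-dependent exceptional hyperplanes are $\mu$-null.
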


We will prove the proposition by averaging (\ref{eq difference of norms squared}) over $t$ and $X$ and taking a limit.
We first prove a lemma.

\begin{lemma}\label{Lemma Oscillatory means}
Let $\cV$ be a finite dimensional representation of $\fa$ so that every $X\in \fa$ acts with only purely imaginary eigenvalues. Let $X_{1},\dots, X_{m}$ be a basis of $\fa$. Let $c$ be any positive irrational real number and set $X_{j+m}=cX_{j}$ for $1\leq j\leq m$. Let further $\cH$ be an inner product space and $F:\cV\to \cH$ a linear map.  For $n\in\N$ we define the set
$$
S_{n}
:=\Big\{\sum_{j=1}^{2m}t_{j}X_{j}\in \fa\mid t_{j}\in\N \text{ and } n+1\leq t_{j}\leq 2n \text{ for all }1\leq j\leq 2m\Big\}
$$
and the map
$$
A_{n}:\cV\to [0,\infty], \quad
    v\mapsto \frac{1}{n^{2m}} \sum_{X\in S_{n}}\|F (e^{X}\cdot v)\|^{2}.
$$
Then the following hold.
\begin{enumerate}[(i)]
\item\label{Lemma Oscillatory means - item 1} If  $v=\sum_{k=1}^{N}v_{k}$ is a decomposition of $v\in \cV$ into a sum of joint eigenvectors $v_{k}$ with distinct eigencharacters, then
$$
\lim_{n\to\infty}A_{n}(v)
=\sum_{k=1}^{N}\|F v_{k}\|^{2}.
$$
\item\label{Lemma Oscillatory means - item 2} Assume that $F$ is injective. If $v\in \cV\setminus\{0\}$ does not decompose into a sum of joint eigenvectors, then
$$
\lim_{n\to\infty}A_{n}(v)=\infty.
$$
\end{enumerate}
\end{lemma}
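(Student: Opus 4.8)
The plan is as follows. First, complexifying $\cV$ and $\cH$ if necessary, I would assume $\cV$ is complex; since the operators $\{X\cdot:X\in\fa\}$ commute and each has purely imaginary spectrum, $\cV$ splits as a direct sum of generalized joint eigenspaces $\cV=\bigoplus_\chi\cV_\chi$ indexed by $\R$-linear functionals $\chi\colon\fa\to i\R$, and on $\cV_\chi$ each $X\in\fa$ acts as $\chi(X)\,\mathrm{id}+N_X$ with $X\mapsto N_X$ being $\R$-linear and the $N_X$ commuting nilpotents. Then $e^X$ acts on $\cV_\chi$ as $e^{\chi(X)}e^{N_X}$ with $|e^{\chi(X)}|=1$, and $X\mapsto e^{N_X}$ is polynomial in $X$; moreover a vector is a sum of joint eigenvectors precisely when each of its components $v_\chi$ is killed by every $N_X$.

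For \emph{(i)}: writing $v=\sum_{k=1}^{N}v_k$ with $X\cdot v_k=\chi_k(X)v_k$ and the $\chi_k$ pairwise distinct, I would expand
$$
\|F(e^Xv)\|^2=\sum_k\|Fv_k\|^2+\sum_{k\neq l}e^{(\chi_k-\chi_l)(X)}\langle Fv_k,Fv_l\rangle .
$$
The first sum is independent of $X$, so its $A_n$-average equals exactly $\sum_k\|Fv_k\|^2$. For a cross term put $\nu=\chi_k-\chi_l\neq0$; since $X=\sum_{j=1}^{2m}t_jX_j$ for $X\in S_n$, the average factorizes as $\prod_{j=1}^{2m}\bigl(\tfrac1n\sum_{t=n+1}^{2n}e^{t\nu(X_j)}\bigr)$, and each factor has modulus $\le 1$. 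As $X_1,\dots,X_m$ is a basis there is $j_0\le m$ with $\nu(X_{j_0})\neq0$, and because $c$ is irrational at least one of $\nu(X_{j_0})$ and $\nu(X_{j_0+m})=c\,\nu(X_{j_0})$ fails to lie in $2\pi i\Z$; for that index the geometric sum is bounded by $2/(n|e^{\nu(X_j)}-1|)$, so the whole product is $O(1/n)$. Hence each cross term averages to $0$ and $A_n(v)\to\sum_k\|Fv_k\|^2$.

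For \emph{(ii)}: by injectivity of $F$ there is $c_0>0$ with $\|Fw\|\ge c_0\|w\|$ for all $w\in\cV$. If $v$ is not a sum of joint eigenvectors I would pick $\chi$ and $X^*$ with $N_{X^*}v_\chi\neq0$; since the $\fa$-equivariant projection $P_\chi$ onto $\cV_\chi$ is bounded and commutes with $e^X$, there is $c_1>0$ with $\|F(e^Xv)\|\ge c_0c_1\|e^Xv_\chi\|=c_0c_1\|e^{N_X}v_\chi\|$. Introducing linear coordinates $s\in\R^m$ on $\fa$ via $X_1,\dots,X_m$, so that a point of $S_n$ has $s_j=t_j+c\,t_{j+m}$, the map $s\mapsto e^{N_{X(s)}}v_\chi$ is a $\cV$-valued polynomial; let $d\ge1$ be the degree of its top homogeneous part $\tfrac1{d!}N_{X(s)}^dv_\chi\not\equiv0$ (which is $\ge1$ precisely because $v_\chi$ is not a joint eigenvector). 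Then $R(s):=\|F(e^{N_{X(s)}}v_\chi)\|^2$ is a nonnegative real polynomial whose degree-$2d$ part $R_{2d}(s)=\tfrac1{(d!)^2}\|FN_{X(s)}^dv_\chi\|^2$ is not identically zero — here injectivity of $F$ is used again. From $R\ge R_{2d}-C(1+\|s\|)^{2d-1}$, together with $\|s(X)\|\le C'n$ on $S_n$ and $|S_n|=n^{2m}$, I would obtain
$$
A_n(v)\ \ge\ (c_0c_1)^2\,\frac1{n^{2m}}\sum_{X\in S_n}R_{2d}\bigl(s(X)\bigr)\ -\ C''n^{2d-1}.
$$
Since $R_{2d}$ is homogeneous of degree $2d$, nonnegative and nonzero, it exceeds some $\delta>0$ on a ball $B(u^*,\epsilon)\subset(1+c,2+2c)^m$, and the box $(1+c,2+2c)^m$ lies in the interior of the closure of $\{s(X)/n:X\in S_n\}$. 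A counting argument then shows that a fixed positive fraction $\kappa>0$ of the $X\in S_n$ satisfy $s(X)/n\in B(u^*,\epsilon)$, whence $\frac1{n^{2m}}\sum_{X\in S_n}R_{2d}(s(X))\ge\kappa\delta\,n^{2d}$; as $d\ge1$ this forces $A_n(v)\to\infty$.

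The soft part is \emph{(i)}, where everything reduces to recognizing the product structure of the average over $S_n$ and using the irrationality of $c$ to exclude the resonance $\nu(X_j)\in2\pi i\Z$ for all $j$. The main obstacle is the last step of \emph{(ii)}: one must ensure the polynomial growth of $\|F(e^Xv)\|^2$ survives the averaging, i.e.\ that enough rescaled points $s(X)/n$ land in a fixed region on which the leading form $R_{2d}$ stays bounded below. The required count is elementary — for fixed $t'$ the numbers $t+c\,t'$ with $t\in\{n+1,\dots,2n\}$ sweep an interval of length $n$, and $(1+c,2+2c)$ is interior to the range of $(t+c\,t')/n$ — but it must be set up with care, and bookkeeping of the lower-order terms of $R$ (which are dominated since $d\ge1$) is part of the argument.
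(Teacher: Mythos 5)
Your proof of (i) is essentially the paper's: the same expansion of $\|F(e^{X}v)\|^{2}$ into diagonal and cross terms, the same factorization of the average over $S_{n}$ into one-dimensional geometric sums of modulus at most $1$, and the same use of the irrationality of $c$ to produce a non-resonant factor that is $O(1/n)$ (you spell out the step the paper only asserts, namely that $\nu(X_{j_{0}})$ and $c\,\nu(X_{j_{0}})$ cannot both lie in $2\pi i\Z$). For (ii) you take a genuinely different route. The paper expands $e^{X}\cdot v$ in generalized eigenvectors $v_{k,\mu}$ with coefficients $e^{\lambda_{k}(X)}t^{\mu}$, arranges that the nonzero $v_{k,\mu}$ be linearly independent, and uses injectivity of $F$ together with equivalence of norms on the finite-dimensional coefficient space to bound every single summand $\|F(e^{X}v)\|^{2}$, $X\in S_{n}$, below by a constant times $n^{2}$; no counting is needed because the bound is pointwise on the grid. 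You instead project to a single generalized eigenspace $\cV_{\chi}$ with $N_{X^{*}}v_{\chi}\neq 0$, note that $s\mapsto e^{N_{X(s)}}v_{\chi}$ is polynomial of degree $d\geq 1$ with nonvanishing leading form of degree $2d$ after taking squared norms, and then show by an elementary counting argument that a fixed positive proportion of the rescaled grid points $s(X)/n$ lie in a ball on which that leading form is bounded below, giving $A_{n}(v)\gtrsim n^{2d}$. Your version avoids the paper's ``may assume the nonzero $v_{k,\mu}$ are linearly independent'' arrangement (the least transparent step of the paper's argument), at the price of the counting step, which you only sketch but which is elementary and correctly set up (per coordinate, a positive fraction of offsets $c\,t'/n$ place $u^{*}_{j}$ in the interior of the swept interval, and then a positive fraction of the $t$'s land within $\epsilon$ of it). One cosmetic slip: having bounded $\|F(e^{X}v)\|\geq c_{0}c_{1}\|e^{N_{X}}v_{\chi}\|$, you analyze $R(s)=\|F(e^{N_{X(s)}}v_{\chi})\|^{2}$, which reintroduces $F$; either drop $F$ from $R$ (then $R_{2d}\not\equiv 0$ needs no injectivity) or insert a factor $\|F\|^{-2}$ into your displayed inequality --- the conclusion is unaffected.
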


\begin{proof}
We begin with (\ref{Lemma Oscillatory means - item 1}).
Let $v\in \cV$.
Assume that there exist $v_{1},\dots, v_{N}\in \cV$ and distinct $\lambda_{1},\dots, \lambda_{N}\in i\fa^{*}$ so that
$$
e^{X}\cdot v
= \sum_{k=1}^{N}e^{\lambda_{k}(X)} v_{k}\qquad (X\in \fa).
$$
We have
\begin{align*}
A_{n}(v)
&= \frac{1}{n^{2m}} \sum_{t_{1}=n+1}^{2n}\sum_{t_{2}=n+1}^{2n}\dots \sum_{t_{2m}=n+1}^{2n}\sum_{k=1}^{N}\sum_{l=1}^{N}
    e^{\sum_{j=1}^{2m}t_{j}\big(\lambda_{k}(X_{j})-\lambda_{l}(X_{j})\big)}\langle Fv_{k},Fv_{l}\rangle\\
&=\sum_{k=1}^{N}\|Fv_{k}\|^{2}
    +2\Re\bigg(\sum_{1\leq k<l\leq N}\Big(\prod_{j=1}^{2m}\frac{1}{n}\sum_{t=n+1}^{2n}e^{t\big(\lambda_{k}(X_{j})-\lambda_{l}(X_{j})\big)}\Big)
    \langle Fv_{k},Fv_{l}\rangle\bigg).
\end{align*}
Note that for every $j$, $k$, $l$ and $n$
$$
\Big|\frac{1}{n}\sum_{t=n+1}^{2n}e^{t\big(\lambda_{k}(X_{j})-\lambda_{l}(X_{j})\big)}\Big|
\leq \frac{1}{n}\sum_{t=n+1}^{2n}\Big|e^{t\big(\lambda_{k}(X_{j})-\lambda_{l}(X_{j})\big)}\Big|
=1.
$$
The elements $X_{j}$ have been chosen so that for every pair $k$ and $l$ with $k\neq l$ there exists a $j$ so that $e^{\lambda_{k}(X_{j})-\lambda_{l}(X_{j})}\neq 1$. For this $j$ we have
$$
\frac{1}{n} \sum_{t=n+1}^{2n}e^{t\big(\lambda_{k}(X_{j})-\lambda_{k}(X_{j})\big)}
=\frac{1}{n}\frac{e^{(n+1)\big(\lambda_{k}(X_{j})-\lambda_{k}(X_{j})\big)}-e^{(2n+1)\big(\lambda_{k}(X_{j})-\lambda_{k}(X_{j})\big)}}
    {1-e^{\big(\lambda_{k}(X_{j})-\lambda_{k}(X_{j})\big)}}
\to 0
$$
for $n\to\infty$. This proves (\ref{Lemma Oscillatory means - item 1}).

We move on to (\ref{Lemma Oscillatory means - item 2}). Let $\lambda_{1},\dots, \lambda_{N}\in i\fa^{*}$ be the distinct generalized joint eigencharacters occurring in $v$.
There exists for every $1\leq k\leq N$ and every multi-index $\mu$ in $m$-variables  a generalized eigenvector $v_{k,\mu}$ with eigenvalue $\lambda_{k}$, so that
$$
e^{X}\cdot v
=\sum_{k=1}^{N}e^{\lambda_{k}(X)} \sum_{\mu}t^{\mu}v_{k,\mu}
\qquad\big(X=\sum_{j=1}^{m}t_{j}X_{j}\in \fa\big).
$$
We may and will assume that the non-zero $v_{k, \mu}$ are linearly independent. Moreover, by the assumption on $v$ we have $v_{k,\mu}\neq 0$ for at least one pair $(k,\mu)$ with $|\mu|\geq1$. Let $\Xi=\{(k,\mu)\mid v_{k,\mu}\neq 0\}$.
Therefore, the map
$$
\R^{\Xi}\to[0,\infty),\quad \gamma\mapsto \Big\|\sum_{(k,\mu)\in \Xi}\gamma_{k,\mu}Fv_{k,\mu}\Big\|
$$
is a norm on $\R^{\Xi}$. By equivalence of norms there exists a $C>0$ so that
$$
\Big\|\sum_{(k,\mu)\in\Xi}\gamma_{k,\mu}Fv_{k,\mu}\Big\|\geq C\|\gamma\|\qquad(\gamma\in\R^{\Xi}).
$$
Now
\begin{align*}
A_{n}(v)
&= \frac{1}{n^{2m}} \sum_{t_{1}=n+1}^{2n}\sum_{t_{2}=n+1}^{2n}\dots \sum_{t_{2m}=n+1}^{2n}\Big\|\sum_{(k,\mu)\in\Xi}e^{\sum_{j=1}^{m}t_{j}\lambda_{k}(X_{j})}t^{\mu}Fv_{k,\mu}\Big\|^{2}\\
&\geq C^{2}\frac{1}{n^{2m}} \sum_{t_{1}=n+1}^{2n}\sum_{t_{2}=n+1}^{2n}\dots \sum_{t_{2m}=n+1}^{2n}
    \Big\|\Big(e^{\sum_{j=1}^{m}t_{j}\lambda_{k}(X_{j})}t^{\mu}\Big)_{(k,\mu)\in\Xi}\Big\|^{2}.
\end{align*}
The latter becomes arbitrarily large for $n\to\infty$ as there exists a pair $(k, \mu)\in\Xi$ with $|\mu|\geq1$.
\end{proof}

\begin{proof}[Proof of Proposition \ref{Prop Comparison of Planch decomps}]
Let $X_{1},\dots, X_{m}\in \fa^{--}$ be a basis of $\fa$. Let $c$ be any positive irrational real number and set $X_{j+m}=cX_{j}$ for $1\leq j\leq m$.
We define 
$$
S
:=\Big\{\sum_{j=1}^{2m}t_{j}X_{j}\in \fa\mid 1\leq t_{j}\leq 2 \text{ for all }1\leq j\leq 2m\Big\}
$$
and set for  $n\in\N$
$$
S_{n}
:=\Big\{\sum_{j=1}^{2m}t_{j}X_{j}\in \fa\mid t_{j}\in\N\text{ and }n+1\leq t_{j}\leq 2n \text{ for all }1\leq j\leq 2m\Big\}.
$$
Note that $S$ is a compact subset of $\fa^{--}$ and $S_{n}\subseteq n S$.
By Lemma \ref{Lemma Z_empt Plancherel comparison} there exist constants $C>0$ and $\epsilon>0$ (independent of $n$) so that
\begin{align}
\nonumber&\bigg|
    \|f\|_{L^2(Z_\empt)}^2
    - \int_{i\af_+^*}
     \frac{1}{n^{2m}} \sum_{X\in S_{n}}\Big\|\sum_{w\in W} \cfunc(w\lambda)e^{-w\lambda(X)} \,
      \pi_{\lambda}(f)\xi_\lambda^{w}\Big\|^{2}\,d\mu(\lambda)\bigg|\\
\nonumber&\leq\frac{1}{n^{2m}} \sum_{X\in S_{n}}\bigg|
    \|f\|_{L^2(Z_\empt)}^2
    -\int_{i\af_+^*} \Big\|\sum_{w\in W} \cfunc(w\lambda)e^{-w\lambda(X)} \, \pi_{\lambda}(f)\xi_\lambda^{w}\Big\|^{2}\,d\mu(\lambda)\bigg|\\
\nonumber&=\frac{1}{n^{2m}} \sum_{X\in \frac{1}{n}S_{n}}\bigg|
    \|f\|_{L^2(Z_\empt)}^2
    -\int_{i\af_+^*} \Big\|\sum_{w\in W} \cfunc(w\lambda)e^{-nw\lambda(X)} \, \pi_{\lambda}(f)\xi_\lambda^{w}\Big\|^{2}\,d\mu(\lambda)\bigg|\\
\label{eq Limit}
&\leq C e^{-\epsilon n}
\to 0\qquad (n\to\infty).
\end{align}

For $\lambda\in i\fa^{*}_{+}$, we define
$$
\cQ_{\lambda}(f)
:=\liminf_{n\to\infty}\frac{1}{n^{2m}}\sum_{X\in S_{n}}\Big\|\sum_{w\in W} \cfunc(w\lambda)e^{-w\lambda(X)} \,
      \pi_{\lambda}(f)\xi_\lambda^{w}\Big\|^{2}\in[0,\infty].
$$
By Fatou's lemma and (\ref{eq Limit})
\begin{align*}
&\int_{i\fa^{*}_{+}}\cQ_{\lambda}(f)\,d\mu(\lambda)\\
&\qquad\leq \liminf_{n\to\infty}\int_{i\fa^{*}_{+}}\frac{1}{n^{2m}}\sum_{X\in S_{n}}\Big\|\sum_{w\in W} \cfunc(w\lambda)e^{- w\lambda(X)} \,
      \pi_{\lambda}(f)\xi_\lambda^{w}\Big\|^{2}\,d\mu(\lambda)
=\|f\|_{L^{2}(Z_{\empt})}^{2}.
\end{align*}
We define the set
$$
\fa^{*}_{\good}
:=\Big\{\lambda\in\fa^{*}_{+}\mid \cQ_{i\lambda}(f)=\sum_{w\in W}|\cfunc(iw\lambda)|^{2}\, \|\pi_{i\lambda}(f)\xi_{i\lambda}^{w}\|^{2} \text{ for all }f\in C_{c}^{\infty}(P\cdot z_{\empt})\Big\}.
$$

Since $C_{c}^{\infty}(P\cdot z_{\empt})$ is dense in $L^{2}(Z_{\empt})$, it follows from the Plancherel decomposition Theorem \ref{Thm Planch thm for Z_empt} for $Z_{\empt}$ that for every $\lambda_{0}\in \fa^{*}_{+}$ there exists an $f\in C_{c}^{\infty}(P\cdot z_{\empt})$ and an open neighborhood $U\subseteq\fa^{*}_{+}$ of $\lambda_{0}$ so that the restriction of $\pi_{i\lambda}(f)$ to $(\cH_{i\lambda}^{-\infty})^{M\overline{N}}$ is injective for all $\lambda\in U$.
Now
\begin{align*}
&\sum_{w\in W}\int_{iU\cap i\fa^{*}_{\good}}|\cfunc(w\lambda)|^{2}\, \|\pi_{\lambda}(f)\xi_\lambda^{w}\|^{2}\,d\mu(\lambda)
+\int_{iU\setminus i\fa^{*}_{\good}}\cQ_{\lambda}(f)\,d\mu(\lambda)\\
&\qquad=\int_{iU}\cQ_{\lambda}(f)\,d\mu(\lambda)
\leq\|f\|_{L^{2}(Z_{\empt})}^{2}.
\end{align*}
By Lemma \ref{Lemma Oscillatory means} we have $\cQ_{\lambda}(f)=\infty$ for all $\lambda\in iU\setminus i\fa^{*}_{\good}$. It therefore follows that $\mu(iU\setminus i\fa^{*}_{\good})=0$. Since any $\lambda_{0}\in \fa^{*}_{+}$ has an open neighborhood $U$ such that $\mu(iU\setminus i\fa^{*}_{\good})=0$, it follows that $\mu(i\fa_{+}\setminus i\fa^{*}_{\good})=0$.
We thus obtain
$$
\sum_{w\in W}\int_{i\fa^{*}_{+}}|\cfunc(w\lambda)|^{2}\, \|\pi_{\lambda}(f)\xi_\lambda^{w}\|^{2}\,d\mu(\lambda)
\leq\|f\|_{L^{2}(Z_{\empt})}^{2}
$$
for all $f\in C_{c}^{\infty}(P\cdot z_{\empt})$.

In view of the triangle inequality we have
$$
\frac{1}{n^{2m}} \sum_{X\in S_{n}}\Big\|\sum_{w\in W} \cfunc(w\lambda)e^{-w\lambda(X)} \,
      \pi_{\lambda}(f)\xi_\lambda^{w}\Big\|^{2}
\leq \sum_{w\in W} |\cfunc(w\lambda)|^{2}\, \|\pi_{\lambda}(f)\xi_\lambda^{w}\|^{2}
$$
for every $f\in C_{c}^{\infty}(P\cdot z_{\empt})$, $\lambda\in i\fa^{*}_{+}$ and $n\in \N$. It thus follows from Lebesgue's dominated convergence theorem and (\ref{eq Limit}) that
\begin{align*}
&\sum_{w\in W}\int_{i\fa^{*}_{+}}|\cfunc(w\lambda)|^{2}\, \|\pi_{\lambda}(f)\xi_\lambda^{w}\|^{2}\,d\mu(\lambda)\\
&\qquad=\lim_{n\to\infty} \int_{i\fa^{*}_{+}}\frac{1}{n^{2m}}\sum_{X\in S_{n}}\Big\|\sum_{w\in W} \cfunc(w\lambda)e^{- w\lambda(X)} \,
      \pi_{\lambda}(f)\xi_\lambda^{w}\Big\|^{2}\,d\mu(\lambda)
=\|f\|_{L^{2}(Z_{\empt})}^{2}.
\end{align*}
\end{proof}

\subsection{Proof of Theorem \ref{thm planch}}
For $w\in W$ we define the measure 
$$
d\mu_{w}(\lambda):=|\cfunc(w\lambda)|^{2}d\mu(\lambda).
$$
We decompose the multiplicity space $\cM_{\lambda}$ as
$$
\cM_{\lambda}=\bigoplus_{w\in W}\cM_{\lambda}^{w},
$$
where $\cM_{\lambda}^{w}:=\big(\C\xi_{\lambda}^{w}\big)'$.
The space $C_{c}^{\infty}(P\cdot z_{\empt})$ is a dense subspace of $L^{2}(Z_{\empt})$. It follows from  Proposition \ref{Prop Comparison of Planch decomps} that $\cF_{\empt}$ extends to a $G$-equivariant unitary map
$$
\big(L, L^2(Z_\empt)\big)\to
    \left(\bigoplus_{w\in W}\int_{i\af_+^*}^\oplus \pi_\lambda\otimes {\rm id}_{\cM_{\lambda}^{w}}\,d\mu_{w}(\lambda), \bigoplus_{w\in W} \int_{i\af^*_{+}}^\oplus \cH_\lambda \otimes \cM_{\lambda}^{w}\,d\mu_{w}(\lambda)\right).
$$
The Plancherel measure is unique.  See for example \cite[Theorem C.I \& Theorem II.6]{Penney}. Therefore the Plancherel decomposition for $\big(L,L^{2}(Z_{\empt})\big)$  in Theorem \ref{Thm Planch thm for Z_empt} implies that 
\begin{equation}\label{eq Equality of measures}
d\mu_{w}(\lambda)=d\lambda \qquad (w\in W).
\end{equation}
In particular, since the $c$-functions are continuous, it follows that $|\cfunc(w\lambda)|^{2}=|\cfunc(\lambda)|^{2}$ for all $w\in W$ and $\lambda\in i\fa^{*}$. These are the Maass-Selberg relations. Moreover, (\ref{eq Equality of measures}) implies that 
$$
d\mu(\lambda)=\frac{d\lambda}{|\cfunc(\lambda)|^{2}}.
$$
The abstract Plancherel decomposition (\ref{eq abstract Plancherel thm}) for $L^{2}(Z)$ now explicates to the assertion that the Fourier transform $\cF$
is a $G$-equivariant unitary isomorphism
$$
\big(L, L^2(Z)\big)\to \left(\int_{i\af^*_{+}}^\oplus \pi_\lambda \,\frac{ d\lambda}{|\cfunc(\lambda)|^{2}},
     \int_{i\af^*_{+}}^\oplus \cH_\lambda\,\frac{ d\lambda}{|\cfunc(\lambda)|^{2}}\right).
 $$
This proves Harish-Chandra's Theorem \ref{thm planch}.

\end{document}